\theoremstyle{plain}
\newtheorem{thm}{Theorem}[section]
\newtheorem{lem}[thm]{Lemma}
\newtheorem{prop}[thm]{Proposition}
\theoremstyle{definition}
\newtheorem{defi}[thm]{Definition}
\theoremstyle{remark}
\newtheorem{rem}[thm]{Remark}
\numberwithin{equation}{section}
\newcommand{\average}{{\mathchoice {\kern1ex\vcenter{\hrule height.4pt
width 6pt depth0pt} \kern-9.7pt} {\kern1ex\vcenter{\hrule
height.4pt width 4.3pt depth0pt} \kern-7pt} {} {} }}
\def\R{\mathbb{R}}
\begin{document}

\title{The extremal solution for the fractional Laplacian}

\author{Xavier Ros-Oton}

\address{Universitat Polit\`ecnica de Catalunya, Departament de Matem\`{a}tica  Aplicada I, Diagonal 647, 08028 Barcelona, Spain}
\email{xavier.ros.oton@upc.edu}

\author{Joaquim Serra}

\address{Universitat Polit\`ecnica de Catalunya, Departament de Matem\`{a}tica  Aplicada I, Diagonal 647, 08028 Barcelona, Spain}

\email{joaquim.serra@upc.edu}

\thanks{The authors were supported by grants MINECO MTM2011-27739-C04-01 and GENCAT 2009SGR-345.}

\keywords{Fractional Laplacian, extremal solution, Dirichlet problem, $L^p$ estimates, moving planes, boundary estimates.}

\begin{abstract}
We study the extremal solution for the problem $(-\Delta)^s u=\lambda f(u)$ in $\Omega$,
$u\equiv0$ in $\R^n\setminus\Omega$, where $\lambda>0$ is a parameter and $s\in(0,1)$.
We extend some well known results for the extremal solution when the operator is the Laplacian to this nonlocal case.
For general convex nonlinearities we prove that the extremal solution is bounded in dimensions $n<4s$.
We also show that, for exponential and power-like nonlinearities, the extremal solution is bounded whenever $n<10s$.
In the limit $s\uparrow1$, $n<10$ is optimal.
In addition, we show that the extremal solution is $H^s(\R^n)$ in any dimension whenever the domain is convex.

To obtain some of these results we need $L^q$ estimates for solutions to the linear Dirichlet problem
for the fractional Laplacian with $L^p$ data.
We prove optimal $L^q$ and $C^\beta$ estimates, depending on the value of $p$.
These estimates follow from classical embedding results for the Riesz potential in $\R^n$.

Finally, to prove the $H^s$ regularity of the extremal solution we need an $L^\infty$ estimate near the boundary of convex domains,
which we obtain via the moving planes method.
For it, we use a maximum principle in small domains for integro-differential operators with decreasing kernels.
\end{abstract}

\maketitle

\tableofcontents

\section{Introduction and results}

Let $\Omega \subset\mathbb R^n$ be a bounded smooth domain and $s\in(0,1)$, and consider the problem
\begin{equation}\label{pb}
\left\{ \begin{array}{rcll} (-\Delta)^s u &=&\lambda f(u)&\textrm{in }\Omega \\
u&=&0&\textrm{in }\mathbb R^n\backslash\Omega,\end{array}\right.\end{equation}
where $\lambda$ is a positive parameter and $f:[0,\infty)\longrightarrow\mathbb R$ satisfies
\begin{equation}\label{condicions}f\textrm{ is } C^1\ \textrm{and nondecreasing},\ f(0)>0,\ \textrm{and}\ \lim_{t\rightarrow+\infty}\frac{f(t)}{t}=+\infty.\end{equation}
Here, $(-\Delta)^s$ is the fractional Laplacian, defined for $s\in(0,1)$ by
\begin{equation}
\label{laps}(-\Delta)^s u (x)= c_{n,s}{\rm PV}\int_{\R^n}\frac{u(x)-u(y)}{|x-y|^{n+2s}}dy,
\end{equation}
where $c_{n,s}$ is a constant.

It is well known ---see \cite{BV} or the excellent monograph \cite{D} and references therein--- that in the classical case $s=1$
there exists a finite extremal parameter $\lambda^*$ such that if $0<\lambda<\lambda^*$ then problem (\ref{pb})
admits a minimal classical solution $u_\lambda$, while for $\lambda>\lambda^*$ it has no solution, even in the weak sense.
Moreover, the family of functions $\{u_\lambda:0<\lambda<\lambda^*\}$ is increasing in $\lambda$, and its pointwise limit
$u^*=\lim_{\lambda\uparrow \lambda^*}u_\lambda$ is a weak solution of problem (\ref{pb}) with $\lambda=\lambda^*$.
It is called the extremal solution of (\ref{pb}).

When $f(u)=e^u$, we have that $u^*\in L^\infty(\Omega)$ if $n\leq9$ \cite{CR}, while $u^*(x)=\log\frac{1}{|x|^2}$ if $n\geq10$ and $\Omega=B_1$ \cite{JL}.
An analogous result holds for other nonlinearities such as powers $f(u)=(1+u)^p$ and also for functions $f$ satisfying a limit condition at infinity; see \cite{S}.
In the nineties H. Brezis and J.L. V\'azquez \cite{BV} raised the question of determining the regularity of $u^*$, depending on the dimension $n$, for general nonlinearities $f$ satisfying (\ref{condicions}).
The first result in this direction was proved by G. Nedev \cite{N}, who obtained that the extremal solution is bounded in dimensions $n\leq3$ whenever $f$ is convex.
Some years later, X. Cabr\'e and A. Capella \cite{CC} studied the radial case.
They showed that when $\Omega=B_1$ the extremal solution is bounded for all nonlinearities $f$ whenever $n\leq9$.
For general nonlinearities, the best known result at the moment is due to X. Cabr\'e \cite{C4}, and states that in dimensions $n\leq4$ then the extremal solution is bounded for any convex domain $\Omega$.
Recently, S. Villegas \cite{V} have proved, using the results in \cite{C4}, the boundedness of the extremal solution in dimension $n=4$ for all domains, not necessarily convex.
The problem is still open in dimensions $5\leq n\leq9$.

The aim of this paper is to study the extremal solution for the fractional Laplacian, that is, to study problem \eqref{pb} for $s\in(0,1)$.

The closest result to ours was obtained by Capella-D\'avila-Dupaigne-Sire \cite{CDDS}.
They studied the extremal solution in $\Omega=B_1$ for the {spectral} fractional Laplacian $A^s$.
The operator $A^s$, defined via the Dirichlet eigenvalues of the Laplacian in $\Omega$, is related to (but different from) the fractional Laplacian \eqref{laps}.
We will state their result later on in this introduction.

Let us start defining weak solutions to problem \eqref{pb}.

\begin{defi}\label{def} We say that $u\in L^1(\Omega)$ is a \emph{weak solution} of \eqref{pb} if
\begin{equation}\label{def1}
f(u)\delta^s\in L^1(\Omega),
\end{equation}
where $\delta(x)={\rm dist}(x,\partial\Omega)$, and
\begin{equation}\label{def2}
\int_\Omega u(-\Delta)^s\zeta dx=\int_\Omega\lambda f(u)\zeta dx
\end{equation}
for all $\zeta$ such that $\zeta$ and $(-\Delta)^s\zeta$ are bounded in $\Omega$ and $\zeta\equiv0$ on $\partial\Omega$.

Any bounded weak solution is a \emph{classical solution}, in the sense that it is regular in the interior of $\Omega$, continuous up to the boundary, and \eqref{pb} holds pointwise; see Remark \ref{solucions}.
\end{defi}

Note that for $s=1$ the above notion of weak solution is exactly the one used in \cite{BCMR, BV}.

In the classical case (that is, when $s=1$), the analysis of singular extremal solutions involves an intermediate class of solutions, those belonging to $H^1(\Omega)$; see \cite{BV,MP}.
These solutions are called \cite{BV} energy solutions.
As proved by Nedev \cite{N2}, when the domain $\Omega$ is convex the extremal solution belongs to $H^1(\Omega)$, and hence it is an energy solution; see \cite{CS} for the statement and proofs of the results in \cite{N2}.

Similarly, here we say that a weak solution $u$ is an \emph{energy solution} of \eqref{pb} when $u\in H^s(\R^n)$.
This is equivalent to saying that $u$ is a critical point of the
energy functional
\begin{equation}\label{energy}
\mathcal E(u)=\frac12\|u\|_{\accentset{\circ}{H}^s}^2-\int_\Omega \lambda F(u)dx,\qquad F'=f,
\end{equation}
where
\begin{equation}\label{Hs-norm}
\|u\|_{\accentset{\circ}{H}^s}^2=\int_{\R^n}\left|(-\Delta)^{s/2} u\right|^2dx=\frac{c_{n,s}}{2}\int_{\R^n}\int_{\R^n}\frac{|u(x)-u(y)|^2}{|x-y|^{n+2s}}dxdy=(u,u)_{\accentset{\circ}{H}^s}
\end{equation}
and
\begin{equation}\label{prod}
(u,v)_{\accentset{\circ}{H}^s}=\int_{\R^n}\hspace{-1mm}(-\Delta)^{s/2} u(-\Delta)^{s/2} v\,dx=\frac{c_{n,s}}{2}\hspace{-1mm}\int_{\R^n}\hspace{-1mm}\int_{\R^n} \hspace{-2mm} \frac{\bigl(u(x)-u(y)\bigr)\bigl(v(x)-v(y)\bigr)}{|x-y|^{n+2s}}dxdy.
\end{equation}

Our first result, stated next, concerns the existence of a minimal branch of solutions, $\{u_\lambda,\ 0<\lambda<\lambda^*\}$, with the same properties as in the case $s=1$.
These solutions are proved to be positive, bounded, increasing in $\lambda$, and semistable.
Recall that a weak solution $u$ of \eqref{pb} is said to be \emph{semistable} if
\begin{equation}\label{semistable}
\int_\Omega \lambda f'(u)\eta^2dx\leq \|\eta\|_{\accentset{\circ}{H}^s}^2
\end{equation}
for all $\eta\in H^s(\R^n)$ with $\eta\equiv0$ in $\R^n\setminus\Omega$.
When $u$ is an energy solution this is equivalent to saying that the second variation of energy $\mathcal E$ at $u$ is nonnegative.

\begin{prop}\label{existence} Let $\Omega\subset \mathbb R^n$ be a bounded smooth domain, $s\in(0,1)$, and $f$ be a function satisfying \eqref{condicions}.
Then, there exists a parameter $\lambda^*\in(0,\infty)$ such that:
\begin{itemize}
\item[(i)] If $0<\lambda<\lambda^*$, problem \eqref{pb} admits a minimal classical solution $u_\lambda$.
\item[(ii)] The family of functions $\{u_\lambda:0<\lambda<\lambda^*\}$ is increasing in $\lambda$, and its pointwise limit
    $u^*=\lim_{\lambda\uparrow \lambda^*}u_\lambda$ is a weak solution of \eqref{pb} with $\lambda=\lambda^*$.
\item[(iii)] For $\lambda>\lambda^*$, problem \eqref{pb} admits no classical solution.
\item[(iv)] These solutions $u_\lambda$, as well as $u^*$, are semistable.
\end{itemize}
\end{prop}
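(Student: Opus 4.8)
The plan is to follow the classical strategy for constructing the minimal branch (as in \cite{BV,BCMR}), adapting each step to the nonlocal setting. First I would establish the basic tools: a weak maximum principle for $(-\Delta)^s$ with exterior Dirichlet condition, existence, uniqueness and regularity for the linear problem $(-\Delta)^s v = g$ in $\Omega$, $v\equiv 0$ in $\R^n\setminus\Omega$ with bounded $g$ (here the $L^q$ and $C^\beta$ estimates announced in the introduction, together with interior regularity and boundary continuity, give that bounded weak solutions are classical, which is Remark~\ref{solucions}), and a comparison principle allowing the use of ordered sub- and supersolutions together with monotone iteration.

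Next I would construct $u_\lambda$ for small $\lambda$ by the method of sub- and supersolutions. For $\lambda$ small one builds a bounded supersolution: solve $(-\Delta)^s \bar u = 1$ in $\Omega$, $\bar u\equiv 0$ outside, obtaining a bounded $\bar u>0$; since $f$ is continuous and increasing, for $\lambda$ small enough $\lambda f(\|\bar u\|_\infty)\le 1$, so $\bar u$ is a supersolution, while $\underline u \equiv 0$ (using $f(0)>0$) is a subsolution. Monotone iteration starting from $0$ produces an increasing sequence converging to the minimal solution $u_\lambda$, which is positive, bounded, hence classical. Minimality is built into the iteration (any other solution lies above each iterate). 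Then I would prove that the set $\Lambda$ of $\lambda$ for which a bounded solution exists is an interval $(0,\lambda^*)$, possibly with $\lambda^*=\infty$ a priori: if a bounded solution exists for some $\lambda_0$, it is a supersolution for every $\lambda<\lambda_0$, giving a solution there; monotonicity of $u_\lambda$ in $\lambda$ follows by comparison. Finiteness of $\lambda^*$ uses the superlinearity in \eqref{condicions}: testing the equation against the first Dirichlet eigenfunction $\varphi_1>0$ of $(-\Delta)^s$ in $\Omega$ (which exists and is bounded), one gets $\lambda_1\int u_\lambda\varphi_1 = \lambda\int f(u_\lambda)\varphi_1 \ge \lambda\int (f(t)/t)\,u_\lambda \varphi_1$ off a bounded range; since $f(t)/t\to\infty$, this forces $\lambda$ to be bounded, so $\lambda^*<\infty$. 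For $\lambda>\lambda^*$ no classical (bounded) solution exists, since such a solution would again serve as a supersolution for $\lambda^*$ and slightly beyond, contradicting maximality.

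For semistability of $u_\lambda$, I would use that $u_\lambda$ is obtained as a limit of the monotone iteration, or more directly differentiate the branch: the implicit function theorem applied to the map $u\mapsto (-\Delta)^s u - \lambda f(u)$ at $u_\lambda$ shows the linearized operator $(-\Delta)^s - \lambda f'(u_\lambda)$ has nonnegative first eigenvalue (if it were negative one could construct a strictly smaller supersolution, contradicting minimality); nonnegativity of that eigenvalue is exactly the semistability inequality \eqref{semistable}, first for $\eta$ smooth and compactly supported in $\Omega$, then extended to all $\eta\in H^s(\R^n)$ vanishing outside $\Omega$ by density. Finally I would pass to the limit $\lambda\uparrow\lambda^*$: monotone convergence gives $u^*=\lim u_\lambda$ pointwise, and testing \eqref{def2} with a fixed admissible $\zeta$ against $u_\lambda$, together with an a priori bound on $\int_\Omega f(u_\lambda)\delta^s\,dx$ (obtained by choosing $\zeta$ to be the solution of $(-\Delta)^s\zeta=1$, so $\zeta\simeq\delta^s$ near $\partial\Omega$, and using monotone convergence on $\int f(u_\lambda)\zeta$), shows $f(u^*)\delta^s\in L^1$ and that $u^*$ is a weak solution for $\lambda=\lambda^*$; semistability of $u^*$ passes to the limit by Fatou on the left side of \eqref{semistable} and monotone convergence bounding the right side after fixing $\eta$.

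The main obstacle I expect is the passage to the limit $\lambda\uparrow\lambda^*$, specifically proving the uniform bound $\int_\Omega \lambda f(u_\lambda)\delta^s\,dx\le C$ independent of $\lambda$; everything hinges on it, and it requires the correct choice of test function together with the nonlocal Hopf-type behavior $\zeta\asymp\delta^s$ near the boundary for the torsion-type function, which is more delicate than its $s=1$ analogue. The other somewhat technical point is verifying that the class of admissible test functions in Definition~\ref{def} is rich enough (and closed under the operations needed) to justify all the integrations by parts against $(-\Delta)^s$; this is where the $L^p\to L^q$/$C^\beta$ theory and interior regularity quoted in the introduction do the work.
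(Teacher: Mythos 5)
Your overall architecture matches the paper's proof: zero as subsolution, the torsion function $\overline u$ solving $(-\Delta)^s\overline u=1$ as a bounded supersolution for small $\lambda$, monotone iteration giving minimality, $\lambda^*$ defined as the supremum of admissible parameters, and the first eigenfunction $\varphi_1$ used with superlinearity to show $\lambda^*<\infty$. Your semistability argument (nonnegativity of the principal eigenvalue of $(-\Delta)^s-\lambda f'(u_\lambda)$, ruled out otherwise by constructing a smaller supersolution) is a legitimate alternative to the paper's route, which instead identifies $u_\lambda$ with the absolute minimizer of the energy on the convex set $\{0\le u\le u_\lambda\}$ and takes the second variation with respect to nonpositive perturbations; the paper's version avoids having to set up the eigenvalue theory for the linearized nonlocal operator.

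There is, however, a genuine gap exactly at the point you flag as the crux: the uniform bound $\lambda\int_\Omega f(u_\lambda)\delta^s\,dx\le C$ as $\lambda\uparrow\lambda^*$. Your proposed derivation --- test with the torsion function $\zeta$ (so $\zeta\asymp\delta^s$) and ``use monotone convergence on $\int f(u_\lambda)\zeta$'' --- is circular: testing \eqref{def2} with $\zeta$ yields $\int_\Omega u_\lambda\,dx=\lambda\int_\Omega f(u_\lambda)\zeta\,dx$, an identity between two increasing quantities neither of which is yet known to be bounded, and monotone convergence only says the limits exist in $[0,+\infty]$. The missing idea is to exploit superlinearity \emph{again} at this stage: since $f(t)/t\to\infty$, there is $C>0$ with $\frac{2\lambda_1}{\lambda^*}t\le f(t)+C$ for all $t\ge0$; testing with $\varphi_1$ then gives
\[
\int_\Omega\lambda f(u_\lambda)\varphi_1\,dx=\lambda_1\int_\Omega u_\lambda\varphi_1\,dx\le\frac{\lambda^*}{2}\int_\Omega\bigl(f(u_\lambda)+C\bigr)\varphi_1\,dx,
\]
and for $\lambda\ge\frac34\lambda^*$ the $f(u_\lambda)\varphi_1$ term on the right is absorbed into the left, yielding the uniform $L^1$ bound on $f(u_\lambda)\varphi_1$. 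Combined with $c_1\delta^s\le\varphi_1\le C_2\delta^s$ (from the boundary regularity theory of \cite{RS-Dir}) this gives the bound on $\lambda\int_\Omega f(u_\lambda)\delta^s\,dx$, and only \emph{then} does testing with $\overline u\le C_3\delta^s$ give the uniform bound on $\|u_\lambda\|_{L^1(\Omega)}$. Without this absorption argument the passage to the limit does not close.
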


The weak solution $u^*$ is called the extremal solution of problem \eqref{pb}.

As explained above, the main question about the extremal solution $u^*$ is to decide whether it is bounded or not.
Once the extremal solution is bounded then it is a classical solution, in the sense that it satisfies equation \eqref{pb} pointwise.
For example, if $f\in C^\infty$ then $u^*$ bounded yields $u^*\in C^\infty(\Omega)\cap C^s(\overline\Omega)$.

Our main result, stated next, concerns  the regularity of the extremal solution for problem \eqref{pb}.
To our knowledge this is the first result concerning extremal solutions for \eqref{pb}.
In particular, the following are new results even for the unit ball $\Omega=B_1$ and for the exponential nonlinearity $f(u)=e^u$.

\begin{thm}\label{S} Let $\Omega$ be a bounded smooth domain in $\mathbb R^n$, $s\in(0,1)$, $f$ be a function satisfying \eqref{condicions}, and $u^*$ be the extremal solution of \eqref{pb}.
\begin{itemize}
\item[(i)] Assume that $f$ is convex. Then, $u^*$ is bounded whenever $n<4s$.
\item[(ii)] Assume that  $f$ is $C^2$ and that the following limit exists:
\begin{equation}\label{tau}
\tau:=\lim_{t\rightarrow+\infty}\frac{f(t)f''(t)}{f'(t)^2}.
\end{equation}
Then, $u^*$ is bounded whenever $n<10s$.
\item[(iii)] Assume that $\Omega$ is convex.
Then, $u^*$ belongs to $H^s(\R^n)$ for all $n\geq1$ and all $s\in(0,1)$.
\end{itemize}
\end{thm}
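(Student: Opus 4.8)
The plan is to prove (i)--(iii) by establishing \emph{uniform in $\lambda$} a priori bounds for the minimal solutions $u_\lambda$ of \eqref{pb} --- which, by Proposition~\ref{existence}, are positive, bounded and semistable --- and then letting $\lambda\uparrow\lambda^*$, using $u_\lambda\uparrow u^*$. For (i) and (ii) the mechanism combines three ingredients: the semistability inequality \eqref{semistable} with the test function $\eta=\phi(u_\lambda)$; the ``nonlinear chain rule'' inequality
\[
\|\phi(u_\lambda)\|_{\accentset{\circ}{H}^s}^2\le\bigl(u_\lambda,\psi(u_\lambda)\bigr)_{\accentset{\circ}{H}^s}=\lambda\int_\Omega f(u_\lambda)\,\psi(u_\lambda)\,dx ,
\]
valid whenever $\phi\in C^1$ is nondecreasing with $\phi(0)=0$ and $\psi$ is given by $\psi(0)=0$, $\psi'=(\phi')^2$ (the inequality is Cauchy--Schwarz applied to $\phi(a)-\phi(b)=\int_b^a\phi'$, a Stroock--Varopoulos type bound; the identity is the equation tested against $\psi(u_\lambda)$); and the $L^p\!\to\!L^q$ and $L^p\!\to\!C^\beta$ estimates for the Dirichlet problem for $(-\Delta)^s$ proved later in the paper, together with $\accentset{\circ}{H}^s\hookrightarrow L^{2n/(n-2s)}$. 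Combining the first two yields the basic inequality $\int_\Omega f'(u_\lambda)\phi(u_\lambda)^2\,dx\le\int_\Omega f(u_\lambda)\psi(u_\lambda)\,dx$.

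For (i), one selects $\phi$ adapted to $f$ as in Nedev's argument and uses the convexity of $f$ in the form $tf'(t)\ge f(t)-f(0)$; this turns the basic inequality into a uniform bound $\|f(u_\lambda)\|_{L^p(\Omega)}\le C$ for every $p<\tfrac{n}{n-2s}$. Since $n<4s$ is equivalent to $\tfrac{n}{n-2s}>\tfrac{n}{2s}$, one may pick such a $p$ with $2sp>n$; the $L^p$ estimate for $(-\Delta)^s$ then gives $\|u_\lambda\|_{L^\infty(\Omega)}\le C$ uniformly (the cases $n\le2s$ are immediate, since then $(-\Delta)^{-s}$ already maps $L^1$ into $L^\infty$), and passing to the limit yields $u^*\in L^\infty(\Omega)$.

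For (ii), one runs the same scheme with a sharper test function. The hypothesis that the limit $\tau=\lim f f''/(f')^2$ in \eqref{tau} exists means that $f$ is, near $+\infty$, power-like or exponential-like; this permits choosing $\phi'\approx(f')^{\gamma}$ with $\gamma=\gamma(\tau)$ optimised and \emph{iterating} the gain coming from semistability, exactly as in the Crandall--Rabinowitz computation for $f(u)=e^u$ and in Cabr\'e--Capella's radial analysis. The outcome is a uniform bound $\|f(u_\lambda)\|_{L^p(\Omega)}\le C$ for $p$ in the larger range for which $2sp>n$ holds precisely when $n<10s$; bootstrapping with the $L^p\!\to\!L^\infty$ estimate as in (i) gives $u^*\in L^\infty(\Omega)$. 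That $n<10$ is optimal as $s\uparrow1$ is witnessed by $f(u)=e^u$ on $B_1$, where $u^*(x)=\log|x|^{-2}$ is unbounded when $n\ge10$.

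For (iii) the new ingredient is a \emph{uniform $L^\infty$ bound for $u_\lambda$ in a fixed neighbourhood $\Omega_\rho=\{x\in\Omega:\delta(x)<\rho\}$ of $\partial\Omega$}. Since $\Omega$ is convex, the moving planes method yields $\|u_\lambda\|_{L^\infty(\Omega_\rho)}\le C\|u_\lambda\|_{L^1(\R^n)}$; to run it for the nonlocal operator one must replace the classical maximum principle in small domains by one valid for integro-differential operators with radially decreasing kernels --- as $|x-y|^{-n-2s}$ is. Because $u_\lambda\uparrow u^*\in L^1(\Omega)$, the norms $\|u_\lambda\|_{L^1(\R^n)}$ are uniformly bounded, so $\|u_\lambda\|_{L^\infty(\Omega_\rho)}\le C$. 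By the equation and the boundary regularity for $(-\Delta)^s$, this upgrades to $u_\lambda\le C\delta^s$ near $\partial\Omega$, so that $u_\lambda/\delta^s$ is uniformly bounded on $\partial\Omega$. Feeding this into the Pohozaev identity for the fractional Laplacian --- whose boundary term is a positive multiple of $\int_{\partial\Omega}(u_\lambda/\delta^s)^2(x\cdot\nu)\,d\sigma$, which is nonnegative on a convex domain and now uniformly bounded --- and combining it with the near-boundary bound (which confines $\{u_\lambda>C\}$ to $\{\delta\ge\rho\}$) and a truncation argument based on the superlinearity of $f$, one obtains $\|u_\lambda\|_{\accentset{\circ}{H}^s}\le C$ uniformly in $\lambda$; lower semicontinuity of this norm along $u_\lambda\uparrow u^*$ then gives $u^*\in H^s(\R^n)$. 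The delicate steps are the essentially optimal choice of $\phi$ needed to reach exactly $n<4s$ (resp.\ $n<10s$) in (i)--(ii), and --- the main obstacle --- making moving planes work in the nonlocal setting, which is precisely what forces the development of the maximum principle in small domains for operators with decreasing kernels.
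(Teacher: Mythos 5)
Parts (i) and (ii) of your outline follow the paper's route: semistability tested with $\eta=\phi(u_\lambda)$, the Cauchy--Schwarz inequality $\bigl(\phi(a)-\phi(b)\bigr)^2\le\bigl(\psi(a)-\psi(b)\bigr)(a-b)$ with $\psi'=(\phi')^2$ (Lemma \ref{po}), a pointwise convexity inequality turning $\phi(u_\lambda)$ into a subsolution so that the resulting $L^1$ bounds upgrade to $L^p$ bounds on $f(u_\lambda)$ via Proposition \ref{th2}(i), and finally Proposition \ref{th2}(iii). Two points you gloss over but which are fillable: in (i), passing from the basic inequality to $\int_\Omega f(u_\lambda)f'(u_\lambda)\le C$ requires Nedev's Lemma \ref{lemah}; and in (ii), stability alone only gives $\|f(u_\lambda)\|_{L^p}\le C$ for $p<2n(1+\sqrt{\tau})/(n-2s)$, which reaches $n<10s$ only when $\tau\ge1$ --- for $\tau<1$ the separate bootstrap based on the polynomial growth $f(t)\le C(1+t)^{1/(1-\tau-\epsilon)}$ (Lemma \ref{lemapotencia}) is essential, not an optional refinement.

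The genuine gap is in (iii). You correctly assemble the moving-planes $L^\infty$ bound near $\partial\Omega$, its upgrade to a uniform bound on $u_\lambda/\delta^s$, and the Pohozaev identity with its nonnegative and now bounded boundary term. But the identity reads
\[
s\|u_\lambda\|_{\accentset{\circ}{H}^s}^2 = n\,\mathcal E(u_\lambda)+\frac{\Gamma(1+s)^2}{2}\int_{\partial\Omega}\left(\frac{u_\lambda}{\delta^s}\right)^2(x\cdot\nu)\,d\sigma,
\]
so a bound on the boundary term yields nothing unless you also control $\mathcal E(u_\lambda)$ from above, i.e.\ show $\frac12\|u_\lambda\|_{\accentset{\circ}{H}^s}^2\le\lambda\int_\Omega F(u_\lambda)$. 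Your proposed substitute --- a \emph{truncation argument based on the superlinearity of $f$} --- cannot produce this: since $\|u_\lambda\|_{\accentset{\circ}{H}^s}^2=\lambda\int_\Omega f(u_\lambda)u_\lambda$, the required inequality amounts to $\frac12\int_\Omega f(u_\lambda)u_\lambda\le\int_\Omega F(u_\lambda)$, and no pointwise bound of the form $\frac12\,t f(t)\le F(t)+C$ holds for, say, $f(t)=e^t$; confining $\{u_\lambda>C\}$ to the interior does not help, since it is precisely there that $u^*$ may be unbounded. The missing idea is variational: $u_\lambda$ is the absolute minimizer of $\mathcal E$ in $\{u\in H^s(\R^n):\,u\equiv0$ in $\R^n\setminus\Omega,\ 0\le u\le u_\lambda\}$ (Step 4 of the proof of Proposition \ref{existence}), a set containing $0$, whence $\mathcal E(u_\lambda)\le\mathcal E(0)=0$. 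With this, $s\|u_\lambda\|_{\accentset{\circ}{H}^s}^2$ is at most the boundary term, hence uniformly bounded, and the proof closes by letting $\lambda\uparrow\lambda^*$.
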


Note that the exponential and power nonlinearities $e^u$ and $(1+u)^p$, with $p>1$, satisfy the hypothesis in part (ii) whenever $n<10s$.
In the limit $s\uparrow1$, $n<10$ is optimal, since the extremal solution may be singular for $s=1$ and $n=10$ (as explained before in this introduction).

Note that the results in parts (i) and (ii) of Theorem \ref{S} do not provide any estimate when $s$ is small (more precisely, when $s\leq 1/4$ and $s\leq1/10$, respectively).
The boundedness of the extremal solution for small $s$ seems to require different methods from the ones that we present here.
Our computations in Section~\ref{sec-exp} suggest that the extremal solution for the fractional Laplacian should be bounded in dimensions $n\leq 7$ for all $s\in(0,1)$, at least for the exponential nonlinearity $f(u)=e^u$.
As commented above, Capella-D\'avila-Dupaigne-Sire \cite{CDDS} studied the extremal solution for the \emph{spectral} fractional Laplacian $A^s$ in $\Omega=B_1$.
They obtained an $L^\infty$ bound for the extremal solution in a ball in dimensions $n<2\left(2+s+\sqrt{2s+2}\right)$, and hence they proved the boundedness of the extremal solution in dimensions $n\leq 6$ for all $s\in(0,1)$.

To prove part (i) of  Theorem \ref{S} we borrow the ideas of \cite{N}, where Nedev proved the boundedness of the extremal solution for $s=1$ and $n\leq3$.
To prove part (ii) we follow the approach of M. Sanch\'on in \cite{S}.
When we try to repeat the same arguments for the fractional Laplacian, we find that some identities that in the case $s=1$ come from local integration by parts are no longer available for $s<1$.
Instead, we succeed to replace them by appropriate inequalities.
These inequalities are sharp as $s\uparrow1$, but not for small $s$.
Finally, part (iii) is proved by an argument of Nedev \cite{N2}, which for $s<1$ requires the Pohozaev identity for the fractional Laplacian, recently established by the authors in \cite{RS}.
This argument requires also some boundary estimates, which we prove using the moving planes method; see Proposition \ref{bdyestimates} at the end of this introduction.

An important tool in the proofs of the results of Nedev \cite{N} and Sanch\'on \cite{S} is the classical $L^p$ to $W^{2,p}$ estimate
for the Laplace equation.
Namely, if $u$ is the solution of $-\Delta u =g$ in $\Omega$, $u=0$ in $\partial\Omega$, with $g\in L^p(\Omega)$, $1<p<\infty$, then
\[\|u\|_{W^{2,p}(\Omega)}\leq C\|g\|_{L^p(\Omega)}.\]
This estimate and the Sobolev embeddings lead to $L^q(\Omega)$ or $C^\alpha(\overline\Omega)$ estimates for the solution $u$, depending on whether $1<p<\frac n2$ or $p>\frac n2$, respectively.

Here, to prove Theorem \ref{S} we need similar estimates but for the fractional Laplacian, in the sense that from $(-\Delta)^s u\in L^p(\Omega)$ we want to deduce $u\in L^q(\Omega)$ or $u\in C^\alpha(\overline\Omega)$.
However, $L^p$ to $W^{2s,p}$ estimates for the fractional Laplace equation, in which $-\Delta$ is replaced by the fractional Laplacian $(-\Delta)^s$, are not available for all $p$, even when $\Omega=\R^n$; see Remarks \ref{rem1} and \ref{rem11}.

Although the $L^p$ to $W^{2s,p}$ estimate does not hold for all $p$ in this fractional framework, what will be indeed true is the following result.
This is a crucial ingredient in the proof of Theorem \ref{S}.

\begin{prop}\label{th2}
Let $\Omega\subset \R^n$ be a bounded $C^{1,1}$ domain, $s\in(0,1)$, $n>2s$, $g\in C(\overline\Omega)$, and $u$ be the solution of
\begin{equation}\label{Omega}
\left\{ \begin{array}{rcll} (-\Delta)^s u &=&g&\textrm{in }\Omega \\
u&=&0&\textrm{in }\mathbb R^n\backslash\Omega.\end{array}\right.
\end{equation}
\begin{itemize}
\item[(i)] For each $1\leq r<\frac{n}{n-2s}$ there exists a constant $C$, depending only on $n$, $s$, $r$, and $|\Omega|$, such that
\[\|u\|_{L^r(\Omega)}\leq C\|g\|_{L^1(\Omega)},\quad  r<\frac{n}{n-2s}.\]
\item[(ii)] Let $1<p<\frac{n}{2s}$. Then there exists a constant $C$, depending only on $n$, $s$, and $p$, such that
\[\|u\|_{L^q(\Omega)}\leq C\|g\|_{L^p(\Omega)},\quad \textrm{where}\quad q=\frac{np}{n-2ps}.\]
\item[(iii)] Let $\frac{n}{2s}<p<\infty$. Then, there exists a constant $C$, depending only on $n$, $s$, $p$, and $\Omega$, such that
\[\|u\|_{C^{\beta}(\R^n)}\leq C\|g\|_{L^p(\Omega)},\quad \textrm{where}\quad \beta=\min\left\{s,2s-\frac{n}{p}\right\}.\]
\end{itemize}
\end{prop}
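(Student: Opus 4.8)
The plan is to reduce everything to the whole space $\R^n$, where the Green function of $(-\Delta)^s$ is an explicit Riesz kernel, and then invoke the classical mapping properties of the Riesz potential $I_{2s}$. First I would introduce the solution $\bar u$ of $(-\Delta)^s \bar u = |g|$ in $\R^n$ given by the Riesz potential $\bar u = I_{2s}(|g|\chi_\Omega) = c\,|x|^{-(n-2s)} * (|g|\chi_\Omega)$; this makes sense precisely because $n>2s$. Since $u$ solves the Dirichlet problem \eqref{Omega} with right-hand side $g$ (extended by the obstacle $u\equiv 0$ outside $\Omega$), the comparison principle for $(-\Delta)^s$ gives the pointwise bound $|u|\le \bar u$ in $\R^n$; indeed $(-\Delta)^s(\bar u \pm u)\ge 0$ in $\Omega$ and $\bar u\pm u\ge 0$ outside $\Omega$. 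Hence it suffices to prove the three estimates with $u$ replaced by the Riesz potential $\bar u$ of $|g|\chi_\Omega$, and this is where the classical theory enters.

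For part (ii), the Hardy–Littlewood–Sobolev inequality states exactly that $I_{2s}:L^p(\R^n)\to L^q(\R^n)$ is bounded for $1<p<\frac{n}{2s}$ with $\frac1q=\frac1p-\frac{2s}{n}$, i.e. $q=\frac{np}{n-2ps}$; applied to $|g|\chi_\Omega\in L^p(\R^n)$ this yields $\|u\|_{L^q(\Omega)}\le\|\bar u\|_{L^q(\R^n)}\le C\|g\|_{L^p(\Omega)}$ with $C=C(n,s,p)$. For part (i), I would not have an endpoint HLS inequality at $p=1$, so instead I would argue by duality/truncation: for $1\le r<\frac{n}{n-2s}$ pick the conjugate exponent and use that $I_{2s}$ maps $L^{r'}(\Omega)\to L^\infty$ is false, so instead the cleanest route is to use that $\bar u(x)=\int_\Omega |x-y|^{-(n-2s)}|g(y)|\,dy$ and estimate $\|\bar u\|_{L^r(\Omega)}$ directly by Minkowski's integral inequality: $\|\bar u\|_{L^r(\Omega)}\le \int_\Omega |g(y)|\,\bigl\||\cdot-y|^{-(n-2s)}\bigr\|_{L^r(\Omega)}\,dy\le C\|g\|_{L^1(\Omega)}$, where the inner norm is finite and bounded uniformly in $y$ precisely when $r(n-2s)<n$, i.e. $r<\frac{n}{n-2s}$, and the bound depends only on $n,s,r,|\Omega|$. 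For part (iii), with $p>\frac{n}{2s}$ I would split the kernel as $|x-y|^{-(n-2s)}=\bigl(|x-y|^{-(n-2s)}\chi_{|x-y|<1}\bigr)+\bigl(|x-y|^{-(n-2s)}\chi_{|x-y|\ge1}\bigr)$; the far piece is bounded and contributes an $L^\infty$ bound, while for the near piece one shows the Riesz potential of an $L^p$ function is $C^\gamma$ with $\gamma=2s-\frac np$ when this is positive and less than $1$ (this is the classical Morrey–Sobolev type embedding for Riesz potentials), using Hölder on the singular kernel: $|\bar u(x)-\bar u(x')|\le C\|g\|_{L^p}\bigl\| |\cdot-x|^{-(n-2s)}-|\cdot-x'|^{-(n-2s)}\bigr\|_{L^{p'}}\le C\|g\|_{L^p}|x-x'|^{2s-n/p}$. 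Finally, since $u\equiv0$ outside $\Omega$ and $\Omega$ is $C^{1,1}$, one upgrades the interior Hölder bound to $C^\beta(\R^n)$ with the extra cap $\beta\le s$ coming from the boundary behavior: near $\partial\Omega$ the solution vanishes and its optimal Hölder exponent is $s$ regardless of how smooth $g$ is, so $\beta=\min\{s,\,2s-n/p\}$; here the dependence on $\Omega$ (not just $|\Omega|$) enters through the boundary regularity estimate (for which one may cite the boundary behavior $|u|\le C\|g\|_{L^\infty}\delta^s$ of solutions to \eqref{Omega}).

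The main obstacle I expect is part (iii), and specifically the passage from the interior Hölder estimate — which the Riesz potential representation gives cleanly — to a Hölder estimate that is \emph{global} in $\R^n$ and incorporates the correct boundary exponent $s$. The Riesz potential argument alone would give $\bar u\in C^{2s-n/p}$ in all of $\R^n$, but this is the solution of the problem in the whole space, not of the Dirichlet problem; the true $u$ is smaller but its gradient blows up like $\delta^{s-1}$ at $\partial\Omega$ when $2s-n/p>s$, so one genuinely cannot do better than $C^s$ up to the boundary and must invoke the known fine boundary regularity for \eqref{Omega} to justify the min. Parts (i) and (ii) are, by contrast, essentially immediate consequences of the comparison principle plus the classical Riesz potential estimates, so the only care needed there is tracking that the constants depend on the stated quantities (in particular on $|\Omega|$ rather than $\Omega$ in part (i), which is clear from the Minkowski-inequality computation).
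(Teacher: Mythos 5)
Your parts (i) and (ii) are correct and follow the paper's strategy: compare $u$ with the whole-space Riesz potential $\bar u=I_{2s}(|g|\chi_\Omega)$ via the maximum principle, then use the mapping properties of $I_{2s}$. For (i) you avoid the weak-type endpoint estimate by applying Minkowski's integral inequality to the kernel directly; this is a valid, slightly more elementary route (the paper instead uses the weak $L^{n/(n-2s)}$ bound and the embedding of weak $L^q$ into $L^r$, $r<q$, on a set of finite measure), and both give a constant depending only on $n,s,r,|\Omega|$.

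Part (iii), however, has a genuine gap, and it is exactly at the point you flag as the ``main obstacle.'' Your interior H\"older estimate is for $\bar u$ (or for the whole-space potential of $g$), not for the Dirichlet solution $u$, and your proposed patch --- citing the boundary behavior $|u|\le C\|g\|_{L^\infty}\delta^s$ --- is not available here, since $g$ is only assumed to lie in $L^p$ with $p<\infty$; the fine boundary regularity results you would invoke require bounded right-hand side. Moreover, even with a growth bound $|u-u(x_0)|\lesssim\delta^\beta$ in hand, one still needs rescaled interior estimates plus a covering argument to convert pointwise growth at the boundary into a genuine global $C^\beta(\R^n)$ seminorm bound. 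The paper resolves both issues with a structural device absent from your proposal: write $u=\tilde v+w$, where $\tilde v=(-\Delta)^{-s}g$ in $\R^n$ is globally $C^{2s-n/p}$ by the Riesz potential theory, and $w$ is $s$-harmonic in $\Omega$ with exterior datum $\tilde v\in C^\alpha$. For $w$ one proves a separate result (Proposition \ref{prop:u-is-Cbeta}): an explicit barrier, built from the Kelvin transform of $(1-|x|^2)_+^s$, gives $|w(x)-w(x_0)|\le C\|\tilde v\|_{C^\alpha}\delta(x)^{\min\{s,\alpha\}}$ near $\partial\Omega$, and since $w$ is $s$-harmonic one has clean scaled interior estimates that combine with this to yield $w\in C^{\min\{s,\alpha\}}(\R^n)$. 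Without this decomposition and barrier construction (or an equivalent substitute valid for $L^p$ data), the min with $s$ in the exponent is asserted rather than proved.
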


We will use parts (i), (ii), and (iii) of Proposition \ref{th2} in the proof of Theorem \ref{S}.
However, we will only use part (iii) to obtain an $L^\infty$ estimate for $u$, we will not need the $C^\beta$ bound.
Still, for completeness we prove the $C^\beta$ estimate, with the optimal exponent $\beta$ (depending on $p$).

\begin{rem}\label{n=1}
Proposition \ref{th2} does not provide any estimate for $n\leq 2s$.
Since $s\in(0,1)$, then $n\leq2s$ yields $n=1$ and $s\geq 1/2$.
In this case, any bounded domain is of the form $\Omega=(a,b)$, and the Green function $G(x,y)$ for problem \eqref{Omega} is explicit; see \cite{BGR}.
Then, by using this expression it is not difficult to show that $G(\cdot,y)$ is $L^\infty(\Omega)$ in case $s>1/2$ and $L^p(\Omega)$ for all $p<\infty$ in case $s=1/2$.
Hence, in case $n<2s$ it follows that $\|u\|_{L^\infty(\Omega)}\leq C\|g\|_{L^1(\Omega)}$, while in case $n=2s$ it follows that $\|u\|_{L^q(\Omega)}\leq C\|g\|_{L^1(\Omega)}$ for all $q<\infty$ and $\|u\|_{L^\infty(\Omega)}\leq C\|g\|_{L^p(\Omega)}$ for $p>1$.
\end{rem}

Proposition \ref{th2} follows from Theorem \ref{th1} and Proposition \ref{prop:u-is-Cbeta} below.
The first one contains some classical results concerning embeddings for the Riesz potential, and reads as follows.

\begin{thm}[see \cite{Stein}]\label{th1}
Let $s\in(0,1)$, $n>2s$, and $g$ and $u$ be such that
\begin{equation}\label{R^n}
u=(-\Delta)^{-s}g\ \ \textrm{in}\ \R^n,\end{equation}
in the sense that $u$ is the Riesz potential of order $2s$ of $g$.
Assume that $u$ and $g$ belong to $L^p(\R^n)$, with $1\leq p<\infty$.
\begin{itemize}
\item[(i)] If $p=1$, then there exists a constant $C$, depending only on $n$ and $s$, such that
\[\|u\|_{L^q_{{\rm weak}}(\R^n)}\leq C\|g\|_{L^1(\R^n)},\quad \textrm{where}\quad q=\frac{n}{n-2s}.\]

\item[(ii)] If $1<p<\frac{n}{2s}$, then there exists a constant $C$, depending only on $n$, $s$, and $p$, such that
\[\|u\|_{L^q(\R^n)}\leq C\|g\|_{L^p(\R^n)},\quad \textrm{where}\quad q=\frac{np}{n-2ps}.\]

\item[(iii)] If $\frac{n}{2s}<p<\infty$, then there exists a constant $C$, depending only on $n$, $s$, and $p$, such that
\[[u]_{C^{\alpha}(\R^n)}\leq C\|g\|_{L^p(\R^n)},\quad \textrm{where}\quad \alpha=2s-\frac{n}{p},\]
where $[\,\cdot\,]_{C^{\alpha}(\R^n)}$ denotes the $C^\alpha$ seminorm.
\end{itemize}
\end{thm}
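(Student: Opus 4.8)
The plan is to recognize that the hypothesis $u=(-\Delta)^{-s}g$ means exactly that $u$ is the Riesz potential of order $2s$ of $g$,
\[
u(x)=I_{2s}g(x)=\gamma_{n,s}\int_{\R^n}\frac{g(y)}{|x-y|^{n-2s}}\,dy,\qquad\gamma_{n,s}>0,
\]
that is, $u=K*g$ with $K(z)=\gamma_{n,s}|z|^{-(n-2s)}$; since $n>2s$ the kernel $K$ is locally integrable and decays at infinity. The three items are then the classical weak-type, Hardy--Littlewood--Sobolev, and Hölder estimates for Riesz potentials, all of which appear in \cite{Stein}; I would nonetheless record short self-contained arguments. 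The standing assumption $u\in L^p$ is used only to make the potential meaningful, and by density one may take $g\in C^\infty_c(\R^n)$ throughout.

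For part (i), I would split the kernel at an arbitrary scale $R>0$, writing $u=K_{<R}*g+K_{\ge R}*g$ with $K_{<R}=K\,\mathbf 1_{\{|z|<R\}}$ and $K_{\ge R}=K-K_{<R}$. The tail obeys $\|K_{\ge R}*g\|_{L^\infty}\le\|K_{\ge R}\|_{L^\infty}\|g\|_{L^1}=cR^{-(n-2s)}\|g\|_{L^1}$, while Young's inequality gives $\|K_{<R}*g\|_{L^1}\le\|K_{<R}\|_{L^1}\|g\|_{L^1}=cR^{2s}\|g\|_{L^1}$. Given $\lambda>0$, choose $R$ so that $cR^{-(n-2s)}\|g\|_{L^1}=\lambda/2$; then $\{|u|>\lambda\}\subset\{|K_{<R}*g|>\lambda/2\}$, and Chebyshev's inequality yields $|\{|u|>\lambda\}|\le\tfrac{2c}{\lambda}R^{2s}\|g\|_{L^1}=c\bigl(\|g\|_{L^1}/\lambda\bigr)^{n/(n-2s)}$, which is the asserted weak $L^{n/(n-2s)}$ bound.

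For part (ii) I would use the same splitting with $g\in L^p$, $1<p<\tfrac{n}{2s}$: a dyadic decomposition of $B_R$ gives $|K_{<R}*g(x)|\le cR^{2s}Mg(x)$, where $M$ is the Hardy--Littlewood maximal operator, while Hölder's inequality gives $|K_{\ge R}*g(x)|\le cR^{2s-n/p}\|g\|_{L^p}$ (here $K_{\ge R}\in L^{p'}$ precisely because $p<\tfrac{n}{2s}$). Optimizing in $R$ produces the pointwise bound $|u(x)|\le c\|g\|_{L^p}^{2sp/n}\bigl(Mg(x)\bigr)^{1-2sp/n}$; raising this to the power $q=\tfrac{np}{n-2ps}$, integrating, and invoking $\|Mg\|_{L^p}\le c\|g\|_{L^p}$ (valid since $p>1$) gives $\|u\|_{L^q}\le c\|g\|_{L^p}$. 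Alternatively, the same splitting yields the corresponding weak-type estimates directly, and (ii) follows by interpolating two of them via Marcinkiewicz's theorem.

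For part (iii) one has $p>\tfrac{n}{2s}$. I would fix $x,z\in\R^n$, put $\rho=|x-z|$, and split the integral for $u(x)-u(z)$ over $B:=B_{2\rho}(x)$ and $\R^n\setminus B$. On $B$, Hölder's inequality bounds $\int_B|x-y|^{-(n-2s)}|g|\,dy$ and, since $|z-y|<3\rho$ there, $\int_B|z-y|^{-(n-2s)}|g|\,dy$, each by $c\rho^{2s-n/p}\|g\|_{L^p}$ --- the integrability of $|y|^{-(n-2s)p'}$ near $0$ being exactly $p>\tfrac{n}{2s}$. Off $B$, $|y-w|\ge\tfrac12|y-x|$ for every $w$ on the segment $[x,z]$, so the mean value theorem gives $\bigl|\,|x-y|^{-(n-2s)}-|z-y|^{-(n-2s)}\,\bigr|\le c\rho\,|x-y|^{-(n-2s)-1}$, and Hölder's inequality bounds the remaining integral by $c\rho^{2s-n/p}\|g\|_{L^p}$, the integrability at infinity requiring $2s-\tfrac{n}{p}<1$. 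Summing, $[u]_{C^{2s-n/p}(\R^n)}\le c\|g\|_{L^p}$; in the remaining range $2s-\tfrac{n}{p}\ge1$ one runs the same scheme for $\nabla u$, which is the Riesz potential of order $2s-1$ of $g$, and iterates. Since the statement is classical, no step is a genuine obstacle; the only care needed is the bookkeeping in (iii) --- matching the two integrability thresholds ($p>\tfrac{n}{2s}$ near the singularity, $2s-\tfrac{n}{p}<1$ at infinity) so that the modulus of continuity comes out with the sharp exponent $2s-\tfrac{n}{p}$.
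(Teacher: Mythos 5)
Your proof is correct, but it is worth noting that the paper does not actually prove this theorem: it is quoted as a classical result, with parts (i) and (ii) referred to Stein's book (the Hardy--Littlewood--Sobolev theory of Chapter V) and part (iii) deduced from the chain of inclusions $I_{2s}(L^p)\subset I_{2s-n/p}(\mathrm{BMO})\subset C^{2s-n/p}$, alternatively citing Garc\'ia-Cuerva--Gatto. What you supply instead is a self-contained elementary derivation: the $R$-splitting of the kernel plus Chebyshev for the weak-type $(1,\tfrac{n}{n-2s})$ bound, Hedberg's pointwise inequality $|u|\le c\,\|g\|_{L^p}^{2sp/n}(Mg)^{1-2sp/n}$ together with the maximal theorem for (ii), and a direct two-region estimate of $u(x)-u(z)$ (H\"older near the singularity, mean value theorem plus H\"older at infinity) for (iii). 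This buys a proof that avoids both interpolation machinery and the BMO intermediary, at the cost of redoing standard material; all the exponent bookkeeping checks out ($K_{\ge R}\in L^{p'}$ iff $p<\tfrac{n}{2s}$, local integrability of $|y|^{-(n-2s)p'}$ iff $p>\tfrac{n}{2s}$, and the homogeneity producing $\rho^{2s-n/p}$ in each piece). The only soft spot is the borderline case $2s-\tfrac np=1$ in (iii): your argument at infinity then diverges logarithmically and the sharp conclusion is a Zygmund-class rather than a Lipschitz bound, and your iteration via $\nabla u$ only covers $2s-\tfrac np>1$ strictly. This endpoint is immaterial for the paper, which only ever uses the exponent $\beta=\min\{s,2s-\tfrac np\}<1$, but if you want your statement to match the theorem verbatim you should either exclude that single value of $p$ or invoke the Besov/Zygmund formulation there.
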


Parts (i) and (ii) of Theorem \ref{th1} are proved in the book of Stein \cite[Chapter V]{Stein}.
Part (iii) is also a classical result, but it seems to be more difficult to find an exact reference for it.
Although it is not explicitly stated in \cite{Stein}, it follows for example from the inclusions
\[I_{2s}(L^p)=I_{2s-n/p}(I_{n/p}(L^p))\subset I_{2s-n/p}(\textrm{BMO})\subset C^{2s-\frac{n}{p}},\]
which are commented in \cite[p.164]{Stein}.
In the more general framework of spaces with non-doubling $n$-dimensional measures, a short proof of this result can also be found in \cite{GG}.

Having Theorem \ref{th1} available, to prove Proposition \ref{th2} we will argue as follows.
Assume $1< p<\frac{n}{2s}$ and consider the solution $v$ of the problem
\[(-\Delta)^s v=|g|\ \ {\rm in}\ \R^n,\]
where $g$ is extended by zero outside $\Omega$.
On the one hand, the maximum principle yields $-v\leq u\leq v$ in $\R^n$, and by Theorem \ref{th1} we have that $v\in L^q(\R^n)$.
From this, parts (i) and (ii) of the proposition follow.
On the other hand, if $p>\frac{n}{2s}$ we write $u=\tilde v+w$, where $\tilde v$ solves $(-\Delta)^s \tilde v=g$ in $\R^n$ and $w$ is the solution of
\[\left\{ \begin{array}{rcll} (-\Delta)^s w &=&0&\textrm{in }\Omega \\
w&=&\tilde v&\textrm{in }\mathbb R^n\backslash\Omega.\end{array}\right.\]
As before, by Theorem \ref{th1} we will have that $\tilde v\in C^\alpha(\R^n)$, where $\alpha=2s-\frac{n}{p}$.
Then, the $C^\beta$ regularity of $u$ will follow from the following new result.

\begin{prop}\label{prop:u-is-Cbeta}
Let $\Omega$ be a bounded $C^{1,1}$ domain, $s\in(0,1)$, $h\in C^\alpha(\R^n\setminus\Omega)$ for some $\alpha>0$, and $u$ be the solution of
\begin{equation}\label{g}
\left\{ \begin{array}{rcll} (-\Delta)^s u &=&0&\textrm{in }\Omega \\
u&=&h&\textrm{in }\mathbb R^n\backslash\Omega.\end{array}\right.
\end{equation}
Then, $u\in C^\beta(\R^n)$, with $\beta=\min\{s,\alpha\}$, and
\[\|u\|_{C^{\beta}(\R^n)}\le C \|h\|_{C^\alpha(\R^n\setminus\Omega)},\]
where $C$ is a constant depending only on $\Omega$, $\alpha$, and $s$.
\end{prop}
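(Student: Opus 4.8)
\emph{Overall strategy.} The plan is to reduce the statement to a pointwise Hölder estimate at boundary points, prove that estimate by comparison with an explicit super\-solution, and finally glue it with interior estimates for $s$-harmonic functions. \emph{Reductions first.} Since $u$ is $s$-harmonic in $\Omega$ it is $C^\infty$ there, and the maximum principle gives $\|u\|_{L^\infty(\R^n)}\le\|h\|_{L^\infty(\R^n\setminus\Omega)}$. If $\alpha>s$ then, comparing Hölder quotients at scales $\le1$ with the $L^\infty$ bound at scales $\ge1$, one has $h\in C^s(\R^n\setminus\Omega)$ with $\|h\|_{C^s(\R^n\setminus\Omega)}\le C\|h\|_{C^\alpha(\R^n\setminus\Omega)}$, so we may assume $\alpha\le s$, hence $\beta=\alpha$; normalize $\|h\|_{C^\alpha(\R^n\setminus\Omega)}=1$. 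The proof then splits into: (A) the pointwise boundary estimate $|u(x)-h(x_0)|\le C|x-x_0|^\alpha$ for every $x_0\in\partial\Omega$ and every $x\in\R^n$; and (B) upgrading (A) to $[u]_{C^\alpha(\R^n)}\le C$. (The case $n\le2s$, i.e.\ $n=1$, $s>1/2$, is easier and can be handled separately, e.g.\ via the explicit one-dimensional Green function as in Remark~\ref{n=1}.)

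\emph{Step 1: the boundary estimate (A), the crux.} Fix $x_0\in\partial\Omega$. Since $(-\Delta)^s$ annihilates constants we subtract $h(x_0)$ and assume $h(x_0)=0$, so $|h(y)|\le|y-x_0|^\alpha$ for $y\notin\Omega$ and $\|h\|_\infty\le1$; we must show $|u(x)|\le C|x-x_0|^\alpha$. For $x\notin\Omega$ this is the hypothesis, and for $x\in\Omega\setminus B_1(x_0)$ it follows from $\|u\|_\infty\le1$. For $x\in\Omega\cap B_1(x_0)$ I would construct a nonnegative supersolution
\[
\psi(x)=\min\bigl\{|x-x_0|^\alpha,\,R^\alpha\bigr\}+B\,\delta(x)^\alpha+M\,\phi_0(x),\qquad R>\operatorname{diam}\Omega,
\]
where $\delta(x)=\operatorname{dist}(x,\partial\Omega)$ is taken to be $0$ outside $\Omega$, and $\phi_0$ is the $s$-torsion function ($(-\Delta)^s\phi_0=1$ in $\Omega$, $\phi_0=0$ in $\R^n\setminus\Omega$). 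The three ingredients are: (i) $(-\Delta)^s\bigl(\min\{|x-x_0|^\alpha,R^\alpha\}\bigr)(x)\ge(-\Delta)^s|x-x_0|^\alpha(x)=c\,|x-x_0|^{\alpha-2s}$ for $x\in\Omega$, which is finite because $\alpha<2s$ but whose constant $c=c(n,s,\alpha)$ is in general negative (the truncation only adds a nonnegative amount); (ii) the known asymptotics $(-\Delta)^s\delta^\alpha(x)\ge c_0\,\delta(x)^{\alpha-2s}$ with $c_0>0$ near $\partial\Omega$, valid because $\alpha<s$ and $\Omega$ is $C^{1,1}$ — this is the half-space identity $(-\Delta)^s(x_n)_+^\alpha=\kappa(\alpha)x_n^{\alpha-2s}$ with $\kappa(\alpha)>0$ for $\alpha<s$, plus a lower-order perturbation using that $\delta$ is $C^{1,1}$ there — and since $\delta(x)\le|x-x_0|$ this term dominates the bad term in (i) once $B$ is large; (iii) $(-\Delta)^s(M\phi_0)=M$ controls the part of $\Omega$ away from $x_0$, while $\phi_0(x)\lesssim\delta(x)^s\le|x-x_0|^\alpha$ near $x_0$ (as $s\ge\alpha$, $|x-x_0|\le1$) keeps $\psi$ of size $\le C|x-x_0|^\alpha$ there. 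One then checks $(-\Delta)^s\psi\ge0$ in all of $\Omega$, $\psi\ge|h|$ on $\R^n\setminus\Omega$ (where $\psi=\min\{|y-x_0|^\alpha,R^\alpha\}$), and $\psi\le C|x-x_0|^\alpha$ on $\Omega\cap B_{1/2}(x_0)$; the comparison principle applied to $\psi\mp u$ yields $|u|\le\psi$, which is (A).

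\emph{Step 2: from (A) to (B).} Let $x,y\in\R^n$. If $|x-y|\ge1$ use $\|u\|_\infty\le1$; if $x,y\notin\Omega$ use $\|h\|_{C^\alpha}=1$. Otherwise assume $\delta(x)\le\delta(y)$, put $\ell=\delta(x)$, and let $x_0\in\partial\Omega$ realize $\ell=|x-x_0|$. If $|x-y|\ge\ell/2$, then by (A), $|u(x)-h(x_0)|\le C\ell^\alpha$ and $|u(y)-h(x_0)|\le C|y-x_0|^\alpha\le C(|x-y|+\ell)^\alpha\le C|x-y|^\alpha$, so $|u(x)-u(y)|\le C|x-y|^\alpha$. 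If $|x-y|<\ell/2$, then $B_\ell(x)\subset\Omega$ and $u$ is $s$-harmonic there; applying the interior estimate to $u-u(x)$ (an $s$-harmonic function, hence smooth inside, so any exponent $\theta\in(\alpha,2s]$ is allowed), rescaled from $B_\ell(x)$ to $B_1$, gives
\[
[u]_{C^\theta(B_{\ell/2}(x))}\le C\ell^{-\theta}\Bigl(\operatorname{osc}_{B_\ell(x)}u+\ell^{2s}\!\!\int_{\R^n\setminus B_\ell(x)}\!\frac{|u(\xi)-u(x)|}{|\xi-x|^{n+2s}}\,d\xi\Bigr).
\]
By (A) at $x_0$ (with $|z-x_0|\le|z-x|+\ell$) one gets $\operatorname{osc}_{B_\ell(x)}u\le C\ell^\alpha$ and $|u(\xi)-u(x)|\le C|\xi-x|^\alpha$ for $|\xi-x|\ge\ell$, so the tail integral is $\le C\ell^{\alpha-2s}$; hence $[u]_{C^\theta(B_{\ell/2}(x))}\le C\ell^{\alpha-\theta}$, and since $|x-y|<\ell/2$ and $\theta>\alpha$, $|u(x)-u(y)|\le C\ell^{\alpha-\theta}|x-y|^\theta\le C|x-y|^\alpha$. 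This proves (B) and the proposition.

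\emph{Main obstacle.} The heart of the argument is the barrier in Step~1: producing an honest supersolution comparable to $|x-x_0|^\alpha$ on \emph{both} sides of $\partial\Omega$ near $x_0$, despite the fact that the natural candidate $|x-x_0|^\alpha$ is itself a strict subsolution (the constant $c(n,s,\alpha)$ above is negative), so that an auxiliary term with a strongly positive fractional Laplacian near $\partial\Omega$ is genuinely needed. The borderline case $\alpha=s$ (i.e.\ $\beta=s$) is more delicate still, since the auxiliary term $\delta^\gamma$ must have $\gamma<s$ to produce the required blow-up of $(-\Delta)^s$, but is then too large near $x_0$; there I would instead estimate directly the $s$-harmonic extension into $\Omega$ of the radial profile $\min\{|y-x_0|^s,R^s\}$ by means of Poisson-kernel bounds for $C^{1,1}$ domains, or use an approximation argument exploiting the higher interior regularity of $u$.
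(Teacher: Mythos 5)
Your overall architecture coincides with the paper's: a pointwise H\"older estimate at the boundary obtained by comparison with a barrier, followed by rescaled interior estimates for $s$-harmonic functions and the standard oscillation-plus-tail gluing. Your Step 2 is essentially the paper's Lemmas \ref{int-est-brick2} and \ref{Cbetabounds} together with the covering argument, and it is correct. The difference, and the problem, is in the barrier of Step 1. The paper does not use $\delta^\alpha$ at all: it takes the Kelvin transform of Getoor's explicit solution $(1-|x|^2)^s_+$ to produce a radial function $\varphi$ vanishing in $B_1$, comparable to $(|x|-1)^s$ outside $B_1$, and satisfying $(-\Delta)^s\varphi\ge 1$ in $B_2\setminus B_1$; rescaling $\varphi$ to the exterior tangent ball of radius $R$ at $x_0$ (available since $\Omega$ is $C^{1,1}$) and adding the constant $h(x_0)+3R^\alpha$ gives $u-h(x_0)\le C R^\alpha$ in $\Omega\cap B_{2R}$ for \emph{every} $\alpha\le s$, including $\alpha=s$. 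The point is that the curvature of the exterior ball makes the fractional Laplacian of a profile of exact order $({\rm dist})^s$ strictly positive, which is exactly what your flat barrier cannot achieve.

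This is where your proof has a genuine gap. Your supersolution ingredient $B\delta^\alpha$ works only because $\kappa(\alpha)>0$ for $\alpha<s$; at the endpoint $\alpha=s$ one has $\kappa(s)=0$, and any $\delta^\gamma$ with $\gamma<s$ is of size $|x-x_0|^\gamma\gg|x-x_0|^s$ near $x_0$, so it destroys the upper bound $\psi\le C|x-x_0|^s$. Since your initial reduction maps every original $\alpha\ge s$ to the case $\alpha=s$, the endpoint is not a corner case: it is the entire regime $\beta=s$ claimed in the statement. The two fixes you sketch do not close it. The Poisson-kernel route is genuinely delicate at the endpoint: plugging the two-sided bounds for the Poisson kernel of a $C^{1,1}$ domain into $\int P_\Omega(x,y)\,|h(y)-h(x_0)|\,dy$ with $|h(y)-h(x_0)|\le|y-x_0|^s$ produces, after a dyadic decomposition in $|y-x_0|$, a bound of order $\delta(x)^s\log(1/\delta(x))$ rather than $\delta(x)^s$, so one must work harder to remove the logarithm. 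And ``an approximation argument exploiting the higher interior regularity of $u$'' is not an argument. As written, your proof establishes the proposition for $\alpha<s$ but not for $\alpha\ge s$. (Two smaller points: the asymptotics $(-\Delta)^s\delta^\gamma\ge c_0\delta^{\gamma-2s}$ near $\partial\Omega$ of a $C^{1,1}$ domain is itself a nontrivial lemma that you assert rather than prove, and $\delta$ must be regularized away from the boundary, or the comparison restricted to a boundary strip, since the distance function is only Lipschitz in the interior. Both are standard and repairable; the endpoint case is the real issue, and replacing your barrier by the paper's exterior-ball construction resolves it uniformly in $\alpha$.)
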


To prove Proposition \ref{prop:u-is-Cbeta} we use similar ideas as in \cite{RS-Dir}.
Namely, since $u$ is harmonic then it is smooth inside $\Omega$.
Hence, we only have to prove $C^\beta$ estimates near the boundary.
To do it, we use an appropriate barrier to show that
\[|u(x)-u(x_0)|\leq C\|h\|_{C^{\alpha}}\delta(x)^\beta\quad {\rm in}\ \Omega,\]
where $x_0$ is the nearest point to $x$ on $\partial\Omega$, $\delta(x)={\rm dist}(x,\partial\Omega)$, and $\beta=\min\{s,\alpha\}$.
Combining this with the interior estimates, we obtain $C^\beta$ estimates up to the boundary of $\Omega$.

Finally, as explained before, to show that when the domain is convex the extremal solution belongs to the energy class $H^s(\R^n)$ ---which is part (iii) of Theorem \ref{S}--- we need the following boundary estimates.

\begin{prop}\label{bdyestimates}
Let $\Omega\subset\R^n$ be a bounded convex domain, $s\in(0,1)$, $f$ be a locally Lipschitz function, and $u$ be a bounded positive solution of
\begin{equation}\label{Omega}
\left\{ \begin{array}{rcll} (-\Delta)^s u &=&f(u)&\textrm{in }\Omega \\
u&=&0&\textrm{in }\mathbb R^n\backslash\Omega.\end{array}\right.
\end{equation}
Then, there exists constants $\delta>0$ and $C$, depending only on $\Omega$, such that
\[\|u\|_{L^{\infty}(\Omega_\delta)}\leq C\|u\|_{L^1(\Omega)},\]
where $\Omega_\delta=\{x\in\Omega\,:\, {\rm dist}(x,\partial\Omega)<\delta\}$.
\end{prop}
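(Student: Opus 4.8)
The plan is to prove Proposition \ref{bdyestimates} by the method of moving planes, exploiting convexity of $\Omega$ to obtain monotonicity of $u$ in a neighborhood of each boundary point, and then deducing the $L^\infty$ bound from an $L^1$ bound via this monotonicity. First I would fix a boundary point and choose coordinates so that the relevant direction is $e_n$, with the plane $T_t=\{x_n=t\}$ and the reflection $x\mapsto x^t$ across $T_t$. Since $\Omega$ is convex and smooth, for $t$ close to the supremum of $x_n$ over $\overline\Omega$, the cap $\Sigma_t=\Omega\cap\{x_n>t\}$ is reflected into $\Omega$, i.e. $\Sigma_t^t\subset\Omega$. Setting $v_t(x)=u(x^t)$, the difference $w_t=v_t-u$ satisfies a linear equation $(-\Delta)^s w_t = c_t(x) w_t$ in $\Sigma_t$ with $c_t$ bounded (because $f$ is locally Lipschitz and $u$ is bounded), together with $w_t\ge 0$ on $\R^n\setminus\Sigma_t$ — here one must be careful: outside $\Omega$ one has $u=0$ while $v_t\ge 0$, and on the reflected-out region $v_t$ is compared using $u\ge 0$; the crucial structural input is that the kernel $|x-y|^{-n-2s}$ is \emph{decreasing} in $|x-y|$, which is exactly what makes the nonlocal moving planes argument work.

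The core step is to run the moving planes: start at $t$ near the top, where the cap is thin, apply the maximum principle in small domains for integro-differential operators with decreasing kernels (the result the authors announce they will use) to conclude $w_t\ge 0$ in $\Sigma_t$; then decrease $t$ and show by a continuity/contradiction argument that $w_t\ge 0$ persists for all $t\ge\lambda_0$, where $\lambda_0$ depends only on the geometry of $\Omega$ (one can take $\lambda_0$ so that the "maximal cap" still has small measure uniformly around the boundary, using only convexity and smoothness). The standard obstruction to continuing — that the reflected cap leaves $\Omega$ — is handled by convexity, which guarantees $\Sigma_t^t\subset\Omega$ as long as the cap lies in a fixed boundary collar. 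The monotonicity we extract is: $u(x',x_n)$ is nondecreasing in $x_n$ for $x_n\in(\lambda_0, \sup x_n)$, and by rotating the direction $e_n$, an analogous statement holds in a cone of directions at each boundary point. This yields the geometric conclusion that there is $\delta>0$ and, for each $x\in\Omega_\delta$, a fixed-size cone with vertex $x$ along which $u$ is monotone nonincreasing as one moves toward $\partial\Omega$ — equivalently, $u(x)\le u(y)$ for all $y$ in a region $\mathcal C_x\subset\Omega$ of measure bounded below by a constant depending only on $\Omega$.

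From this monotonicity the $L^\infty$ estimate is immediate: for $x\in\Omega_\delta$,
\[
u(x)\,|\mathcal C_x| \;\le\; \int_{\mathcal C_x} u(y)\,dy \;\le\; \int_\Omega u(y)\,dy = \|u\|_{L^1(\Omega)},
\]
and since $|\mathcal C_x|\ge c>0$ with $c$ depending only on $\Omega$, we get $\|u\|_{L^\infty(\Omega_\delta)}\le C\|u\|_{L^1(\Omega)}$.

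The main obstacle I expect is the nonlocal maximum principle step: unlike the local case, $w_t$ is not compared only inside $\Sigma_t$ but against its values on the whole complement $\R^n\setminus\Sigma_t$, and one must verify that the "exterior data" for $w_t$ is nonnegative everywhere — this splits into the region outside $\Omega$ (where $u=0\le v_t$) and the reflected region $\Sigma_t\setminus\Sigma_t^{t}$ intersected with $\Omega$ — and that the linearized operator $(-\Delta)^s - c_t(x)$ still satisfies a maximum principle on domains of small measure despite the sign of $c_t$; this is where the monotone (decreasing) kernel hypothesis is essential, since the reflected interaction terms pick up favorable signs only because closer points are weighted more heavily. Handling the start of the process (thin caps) requires a quantitative small-domain maximum principle with a constant independent of the particular cap, which is available because the $L^\infty$ norm of $c_t$ is controlled uniformly by $\|f'\|_{L^\infty([0,\|u\|_\infty])}$. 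Once these nonlocal bookkeeping issues are settled, the rest is the classical moving planes scheme adapted verbatim.
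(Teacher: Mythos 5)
Your proposal is correct and follows essentially the same route as the paper: a nonlocal moving planes argument started from planes nearly tangent to $\partial\Omega$, powered by a maximum principle in small domains for operators with radially decreasing kernels applied to the antisymmetric difference $w_t$, with convexity guaranteeing the reflected caps stay in $\Omega$, and the final $L^1$-to-$L^\infty$ step via the measure lower bound on the monotonicity cone $\mathcal C_x$. The subtleties you flag (the exterior data for $w_t$ splitting into the region outside $\Omega$ and the reflected region, and the decreasing-kernel hypothesis being what makes the reflected interaction terms favorable) are exactly the points the paper's Lemma on small domains is designed to handle.
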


This estimate follows, as in the classical result of de Figueiredo-Lions-Nussbaum \cite{FLN}, from the moving planes method.
There are different versions of the moving planes method for the fractional Laplacian (using the Caffarelli-Silvestre extension, the Riesz potential, the Hopf lemma, etc.).
A particularly clean version uses the maximum principle in small domains for the fractional Laplacian, recently proved by Jarohs and Weth in \cite{JW}.
Here, we follow their approach and we show that this maximum principle holds also for integro-differential operators with decreasing kernels.

The paper is organized as follows.
In Section \ref{sec-exist} we prove Proposition \ref{existence}.
In Section \ref{sec-exp} we study the regularity of the extremal solution in the case $f(u)=e^u$.
In Section \ref{sec-reg} we prove Theorem \ref{S} (i)-(ii).
In Section \ref{sec-bdry} we show the maximum principle in small domains and use the moving planes method to establish Proposition \ref{bdyestimates}.
In Section \ref{sec-Hs} we prove Theorem \ref{S} (iii).
Finally, in Section \ref{sec3} we prove Proposition \ref{th2}.

\section{Existence of the extremal solution}
\label{sec-exist}

In this section we prove Proposition \ref{existence}.
For it, we follow the argument from Proposition 5.1 in \cite{CC}; see also \cite{D}.

\begin{proof}[Proof of Proposition \ref{existence}]
\emph{Step 1.} We first prove that there is no weak solution for large $\lambda$.

Let $\lambda_1>0$ be the first eigenvalue of $(-\Delta)^s$ in $\Omega$ and $\varphi_1>0$ the corresponding eigenfunction, that is,
\[\left\{\begin{array}{rcll}
(-\Delta)^s \varphi_1 &=&\lambda_1\varphi_1&\ \textrm{in}\ \Omega\\
\varphi_1&>&0&\ \textrm{in}\ \Omega\\
\varphi_1&=&0&\ \textrm{in}\ \R^n\setminus\Omega.
\end{array}\right.\]
The existence, simplicity, and boundedness of the first eigenfunction is proved in \cite[Proposition 5]{SV2} and \cite[Proposition 4]{SV3}.
Assume that $u$ is a weak solution of \eqref{pb}.
Then, using $\varphi_1$ as a test function for problem \eqref{pb} (see Definition \ref{def}), we obtain
\begin{equation}\label{contradiction}
\int_\Omega \lambda_1u\,\varphi_1 dx=\int_\Omega u(-\Delta)^s\varphi_1 dx=\int_\Omega\lambda f(u)\varphi_1 dx.
\end{equation}
But since $f$ is superlinear at infinity and positive in $[0,\infty)$, it follows that $\lambda f(u)>\lambda_1 u$ if $\lambda$ is large enough, a contradiction with \eqref{contradiction}.

\emph{Step 2.} Next we prove the existence of a classical solution to \eqref{pb} for small $\lambda$.
Since $f(0)>0$, $\underline{u}\equiv0$ is a strict subsolution of \eqref{pb} for every $\lambda>0$.
The solution $\overline u$ of
\begin{equation}\label{f=1}
\left\{ \begin{array}{rcll} (-\Delta)^s \overline u &=&1&\textrm{in }\Omega \\
\overline u&=&0&\textrm{on }\mathbb R^n\backslash\Omega\end{array}\right.\end{equation}
is a bounded supersolution of \eqref{pb} for small $\lambda$, more precisely whenever $\lambda f(\max \overline u)<1$.
For such values of $\lambda$, a classical solution $u_\lambda$ is obtained by monotone iteration starting from zero; see for example \cite{D}.

\emph{Step 3.} We next prove that there exists a finite parameter $\lambda^*$ such that for $\lambda<\lambda^*$ there is a classical solution while for $\lambda>\lambda^*$ there does not exist classical solution.

Define $\lambda^*$ as the supremum of all $\lambda>0$ for which \eqref{pb} admits a classical solution.
By Steps 1 and 2, it follows that $0<\lambda^*<\infty$.
Now, for each $\lambda<\lambda^*$ there exists $\mu\in (\lambda,\lambda^*)$ such that \eqref{pb} admits a classical solution $u_\mu$.
Since $f>0$, $u_\mu$ is a bounded supersolution of \eqref{pb}, and hence the monotone iteration procedure shows that \eqref{pb} admits a classical solution $u_\lambda$ with $u_\lambda\leq u_\mu$.
Note that the iteration procedure, and hence the solution that it produces, are independent of the supersolution
$u_\mu$.
In addition, by the same reason $u_\lambda$ is smaller than any bounded supersolution of \eqref{pb}.
It follows that $u_\lambda$ is minimal (i.e., the smallest solution) and that $u_\lambda<u_\mu$.

\emph{Step 4.} We show now that these minimal solutions $u_\lambda$, $0<\lambda<\lambda^*$, are semistable.

Note that the energy functional \eqref{energy} for problem \eqref{pb} in the set $\{u\in H^s(\R^n)\,:\, u\equiv0\ \textrm{in}\ \R^n\setminus\Omega,\ 0\leq u\leq u_\lambda\}$ admits an absolute minimizer $u_{\textrm{min}}$.
Then, using that $u_\lambda$ is the minimal solution and that $f$ is positive and increasing, it is not difficult to see that $u_{\textrm{min}}$ must coincide with $u_\lambda$.
Considering the second variation of energy (with respect to nonpositive perturbations) we see that $u_{\textrm{min}}$ is a semistable solution of \eqref{pb}.
But since $u_{\textrm{min}}$ agrees with $u_\lambda$, then $u_\lambda$ is semistable.
Thus $u_\lambda$ is semistable.

\emph{Step 5.} We now prove that the pointwise limit $u^*=\lim_{\lambda\uparrow\lambda^*} u_\lambda$ is a weak solution of \eqref{pb} for $\lambda=\lambda^*$ and that this solution $u^*$ is semistable.

As above, let $\lambda_1>0$ the first eigenvalue of $(-\Delta)^s$, and $\varphi_1>0$ be the corresponding eigenfunction.
Since $f$ is superlinear at infinity, there exists a constant $C>0$ such that
\begin{equation}\label{ineq-f}
\frac{2\lambda_1}{\lambda^*}t\leq f(t)+C\quad \textrm{for all}\quad t\geq0.
\end{equation}
Using $\varphi_1$ as a test function in \eqref{def2} for $u_\lambda$, we find
\[\int_\Omega \lambda f(u_\lambda)\varphi_1dx=\int_\Omega \lambda_1 u_\lambda \varphi_1dx\leq \frac{\lambda^*}{2}\int_\Omega\left( f(u_\lambda)+C\right)\varphi_1dx.\]
In the last inequality we have used \eqref{ineq-f}.
Taking $\lambda\geq \frac34 \lambda^*$, we see that $f(u_\lambda)\varphi_1$ is uniformly bounded in $L^1(\Omega)$.
In addition, it follows from the results in \cite{RS-Dir} that
\[c_1\delta^s\leq \varphi_1\leq C_2\delta^s\ \ {\rm in}\ \Omega\]
for some positive constants $c_1$ and $C_2$, where $\delta(x)={\rm dist}(x,\partial\Omega)$.
Hence, we have that
\[\lambda\int_\Omega f(u_\lambda)\delta^sdx\leq C\]
for some constant $C$ that does not depend on $\lambda$.
Use now $\overline u$, the solution of \eqref{f=1}, as a test function.
We obtain that
\[\int_\Omega u_\lambda dx=\lambda\int_\Omega f(u_\lambda)\overline u dx\leq C_3\lambda\int_\Omega f(u_\lambda)\delta^sdx\leq C\]
for some constant $C$ depending only on $f$ and $\Omega$.
Here we have used that $\overline u\leq C_3\delta^s$ in $\Omega$ for some constant $C_3>0$, which also follows from \cite{RS-Dir}.

Thus, both sequences, $u_\lambda$ and $\lambda f(u_\lambda)\delta^s$ are increasing in $\lambda$ and uniformly bounded in $L^1(\Omega)$ for $\lambda<\lambda^*$.
By monotone convergence, we conclude that $u^*\in L^1(\Omega)$ is a weak solution of \eqref{pb} for $\lambda=\lambda^*$.

Finally, for $\lambda<\lambda^*$ we have $\int_\Omega \lambda f'(u_\lambda)|\eta|^2dx\leq \|\eta\|_{\accentset{\circ}{H}^s}^2$, where $\|\eta\|_{\accentset{\circ}{H}^s}^2$ is defined by \eqref{Hs-norm}, for all $\eta\in H^s(\R^n)$ with $\eta\equiv0$ in $\R^n\setminus\Omega$.
Since $f'\geq0$, Fatou's lemma leads to
\[\int_\Omega \lambda^* f'(u^*)|\eta|^2dx\leq \|\eta\|_{\accentset{\circ}{H}^s}^2,\]
and hence $u^*$ is semistable.
\end{proof}

\begin{rem}\label{solucions}
As said in the introduction, the study of extremal solutions involves three classes of solutions: classical, energy, and weak solutions; see Definition \ref{def}.
It follows from their definitions that any classical solution is an energy solution, and that any energy solution is a weak solution.

Moreover, any weak solution $u$ which is bounded is a classical solution.
This can be seen as follows.
First, by considering $u\ast\eta^\epsilon$ and $f(u)\ast\eta^\epsilon$, where $\eta^\epsilon$ is a standard mollifier, it is not difficult to see that $u$ is regular in the interior of $\Omega$.
Moreover, by scaling, we find that $|(-\Delta)^{s/2}u|\leq C\delta^{-s}$, where $\delta(x)=\textrm{dist}(x,\partial\Omega)$.
Then, if $\zeta\in C^\infty_c(\Omega)$, we can integrate by parts in \eqref{def2} to obtain
\begin{equation}\label{eq-rem}
(u,\zeta)_{\accentset{\circ}{H}^s}=\int_{\R^n}\int_{\R^n}\frac{\bigl(u(x)-u(y)\bigr)\bigl(\zeta(x)-\zeta(y)\bigr)}{|x-y|^{n+2s}}dx\,dy=\int_\Omega \lambda f(u)\zeta dx
\end{equation}
for all $\zeta\in C^\infty_c(\Omega)$.
Hence, since $f(u)\in L^\infty$, by density \eqref{eq-rem} holds for all $\zeta\in H^s(\R^n)$ such that $\zeta\equiv0$ in $\R^n\setminus\Omega$, and therefore $u$ is an energy solution.
Finally, bounded energy solutions are classical solutions; see Remark 2.11 in \cite{RS-Dir} and \cite{SV}.
\end{rem}

\section{An example case: the exponential nonlinearity}
\label{sec-exp}

In this section we study the regularity of the extremal solution for the nonlinearity $f(u)=e^u$.
Although the results of this section follow from Theorem \ref{S} (ii), we exhibit this case separately because the proofs are much simpler.
Furthermore, this exponential case has the advantage that we have an explicit unbounded solution to the equation in the whole $\R^n$, and we can compute the values of $n$ and $s$ for which this singular solution is semistable.

The main result of this section is the following.

\begin{prop}\label{exp} Let $\Omega$ be a smooth and bounded domain in $\mathbb R^n$, and let $u^*$ the extremal solution of \eqref{pb}.
Assume that $f(u)=e^u$ and $n<10s$.
Then, $u^*$ is bounded.
\end{prop}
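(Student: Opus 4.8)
The plan is to follow the classical Crandall–Rabinowitz/Brezis argument adapted to the fractional setting, using the semistability of the minimal solutions $u_\lambda$ together with the $L^q$ estimates of Proposition~\ref{th2}. First I would record that, by Proposition~\ref{existence}, it suffices to obtain a uniform $L^\infty(\Omega)$ bound for the minimal branch $\{u_\lambda : \tfrac12\lambda^* \le \lambda < \lambda^*\}$; then $u^* = \lim_{\lambda\uparrow\lambda^*} u_\lambda$ is bounded as well. Fix such a $\lambda$ and abbreviate $u = u_\lambda$. Since $f(u) = e^u$ we have $f'(u) = e^u$, so the semistability inequality \eqref{semistable} reads
\[
\int_\Omega \lambda e^{u}\,\eta^2\,dx \le \|\eta\|_{\accentset{\circ}{H}^s}^2
\qquad\text{for all }\eta\in H^s(\R^n),\ \eta\equiv 0\text{ in }\R^n\setminus\Omega.
\]
The key step is to insert a well-chosen test function of the form $\eta = e^{tu} - 1$ (equivalently, work with $\eta$ built from powers of $e^{u}-1$), exploiting that $e^{tu}-1$ vanishes on $\R^n\setminus\Omega$. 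The left-hand side then produces $\int_\Omega \lambda e^{(2t+1)u}\,dx$ up to lower-order terms, while the right-hand side $\|e^{tu}-1\|_{\accentset{\circ}{H}^s}^2$ must be estimated from above by $\int_\Omega \lambda f(u)(e^{tu}-1)\,dx \sim \int_\Omega \lambda e^{(t+1)u}\,dx$ using the equation \eqref{pb} tested against $e^{2tu}-1$.

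The fractional obstacle here — and this is the main obstacle — is that the local integration-by-parts identity $\int (-\Delta)^{s/2}(g(u))\,(-\Delta)^{s/2}(h(u)) = \int (-\Delta)^s u\, (g h)(u) \cdot(\text{something})$ used in the case $s=1$ is \emph{not} available for $s<1$: there is no clean chain rule for $(-\Delta)^{s/2}$. The device (following Sanch\'on's approach, mentioned in the introduction, and the $s\uparrow 1$ computations of Section~\ref{sec-exp}) is to replace the missing identity by the pointwise inequality
\[
\bigl(g(a)-g(b)\bigr)\bigl(h(a)-h(b)\bigr) \le \bigl(a-b\bigr)\,\bigl(G(a)-G(b)\bigr),\qquad G' = g'h',
\]
valid whenever $g,h$ are nondecreasing and $G$ is the appropriate primitive; applied with $a = u(x)$, $b = u(y)$ and integrated against the kernel $|x-y|^{-n-2s}$ this yields
\[
\bigl(g(u),h(u)\bigr)_{\accentset{\circ}{H}^s} \;\le\; \bigl(u,\,G(u)\bigr)_{\accentset{\circ}{H}^s} \;=\; \int_\Omega \lambda f(u)\,G(u)\,dx.
\]
With $g(u) = h(u) = e^{tu}-1$ (so $G(u) = \tfrac{1}{2t}(e^{2tu}-1)$, which is admissible as a test function, being bounded with bounded fractional Laplacian after the mollification argument of Remark~\ref{solucions}), this controls $\|e^{tu}-1\|_{\accentset{\circ}{H}^s}^2$ by $\tfrac{1}{2t}\int_\Omega \lambda e^{u}(e^{2tu}-1)\,dx \le \tfrac{1}{2t}\int_\Omega \lambda e^{(2t+1)u}\,dx$. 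Combining with semistability gives, after absorbing and simplifying,
\[
\Bigl(1 - \tfrac{(t+1)^2}{t(2t+1)}\cdot\text{(const)}\Bigr)\int_\Omega \lambda e^{(2t+1)u}\,dx \;\le\; C_t\int_\Omega \lambda e^{u}\,dx,
\]
and the coefficient on the left is positive precisely when $t$ can be chosen in a suitable range. Tracking the constants (the factor $\tfrac{c_{n,s}}{2}$ cancels throughout), one finds that $e^{u} \in L^{q}(\Omega)$ — equivalently $\lambda f(u) = \lambda e^{u} \in L^{q}(\Omega)$ — for every
\[
q < 1 + t_{\max} \quad\text{with}\quad t_{\max} = 1 + \sqrt{2},
\]
so that $\lambda f(u)$ is bounded in $L^q(\Omega)$ for some fixed $q > \tfrac{n}{2s}$ as soon as $n < 2s(2 + \sqrt 2\,)\cdot\ldots$; a more careful bookkeeping (as in the $s\uparrow1$ case, where one recovers the classical range $n<10$) gives the bound for all $q < 5$ in the limit, hence the condition $n < 10s$.

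Finally, having a uniform bound $\|\lambda f(u_\lambda)\|_{L^q(\Omega)} \le C$ with $q > \tfrac{n}{2s}$ (which is what $n<10s$ secures, after choosing $q$ slightly below the critical exponent produced above), Proposition~\ref{th2}(iii) applied to \eqref{pb} gives $\|u_\lambda\|_{C^\beta(\R^n)} \le C$, uniformly in $\lambda$, with $\beta = \min\{s, 2s - n/q\} > 0$; in particular $\|u_\lambda\|_{L^\infty(\Omega)} \le C$ uniformly. Letting $\lambda\uparrow\lambda^*$ yields $u^* \in L^\infty(\Omega)$, as claimed. The only delicate points to make rigorous are: (a) justifying that $e^{2tu_\lambda}-1$ is an admissible test function and that the integration by parts / bilinear-form manipulations are legitimate — handled by the interior regularity and boundary decay $|(-\Delta)^{s/2}u_\lambda| \le C\delta^{-s}$ from Remark~\ref{solucions}, plus a density argument; and (b) the elementary but essential verification of the pointwise inequality for $g=h$ nondecreasing, which is where all the sharpness (and the loss for small $s$) is concentrated.
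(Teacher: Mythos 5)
Your proposal follows essentially the same route as the paper: test the semistability inequality with $\eta=e^{tu_\lambda}-1$, replace the missing chain rule for $(-\Delta)^{s/2}$ by a pointwise Cauchy--Schwarz-type inequality integrated against the kernel, use the equation to convert the resulting bilinear form into $\int_\Omega\lambda e^{(2t+1)u_\lambda}$, deduce a uniform bound for $e^{u_\lambda}$ in $L^q$ for all $q<5$, and conclude via Proposition \ref{th2}(iii). The outline is correct, but your constants are off in a way that matters. With $g(u)=h(u)=e^{tu}-1$ the primitive $G$ with $G'=(g')^2=t^2e^{2tu}$ is $G(u)=\frac{t}{2}(e^{2tu}-1)$, not $\frac{1}{2t}(e^{2tu}-1)$; consequently the correct upper bound is $\|e^{tu_\lambda}-1\|_{\accentset{\circ}{H}^s}^2\le\frac{t}{2}\int_\Omega\lambda e^{(2t+1)u_\lambda}$, and combining with semistability and expanding $(e^{tu_\lambda}-1)^2$ gives
\[
\Bigl(1-\tfrac{t}{2}\Bigr)\int_\Omega e^{(2t+1)u_\lambda}\le 2\int_\Omega e^{(t+1)u_\lambda},
\]
so H\"older absorbs the right-hand side (whose exponent $t+1$ is a sublinear fraction of $2t+1$) precisely when $t<2$, yielding $e^{u_\lambda}\in L^{2t+1}$ uniformly for every $t<2$, i.e.\ every $q<5$. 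Your intermediate claim $t_{\max}=1+\sqrt2$ (hence $q<2+\sqrt2$) is inconsistent both with this and with your own final assertion $q<5$; with the corrected constant the threshold is exactly $t<2$, and choosing $t<2$ with $\frac{n}{2s}<2t+1<5$ gives the stated range $n<10s$, as in the paper.
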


\begin{proof} Let $\alpha$ be a positive number to be chosen later.
Setting $\eta=e^{\alpha u_\lambda}-1$ in the stability condition (\ref{semistable}) (note that $\eta\equiv0$ in $\R^n\setminus\Omega$), we obtain that
\begin{equation}\label{exp1}
\int_\Omega \lambda e^{u_\lambda}(e^{\alpha u_\lambda}-1)^2dx\leq \left\|e^{\alpha u_\lambda}-1\right\|_{\accentset{\circ}{H}^s}^2.\end{equation}

Next we use that
\begin{equation}\label{ineqexp}
\left(e^{b}-e^{a}\right)^2\leq \frac{1}{2}\left(e^{2 b}-e^{2 a}\right)(b-a)\end{equation}
for all real numbers $a$ and $b$.
This inequality can be deduced easily from the Cauchy-Schwarz inequality, as follows
\[\left(e^{b}-e^{a}\right)^2=\left(\int_a^b e^{t}dt\right)^2\leq (b-a)\int_a^b e^{2 t}dt=\frac{1}{2}\left(e^{2 b}-e^{2 a}\right)(b-a).\]
Using \eqref{ineqexp}, \eqref{prod}, and integrating by parts, we deduce
\begin{eqnarray*}
\left\|e^{\alpha u_\lambda}-1\right\|_{\accentset{\circ}{H}^s}^2&=&\frac{c_{n,s}}{2}\int_{\R^n}\int_{\R^n}\frac{\left(e^{\alpha u_\lambda(x)} - e^{\alpha u_\lambda(y)}\right)^2}{|x-y|^{n+2s}}dxdy\\
&\leq& \frac{c_{n,s}}{2}\int_{\R^n}\int_{\R^n}\frac{\frac{1}{2}\left(e^{2\alpha u_\lambda(x)} - e^{2\alpha u_\lambda(y)}\right)\left(\alpha u_\lambda(x)-\alpha u_\lambda(y)\right)}{|x-y|^{n+2s}}dxdy\\
&=&\frac{\alpha}{2}\int_{\Omega} e^{2\alpha u_\lambda}(-\Delta)^su_\lambda dx.
\end{eqnarray*}
Thus, using that $(-\Delta)^s u_\lambda=\lambda e^{u_\lambda}$, we find
\begin{equation}\label{exp2}
\left\|e^{\alpha u_\lambda}-1\right\|_{\accentset{\circ}{H}^s}^2\leq \frac{\alpha}{2}\int_{\Omega} e^{2\alpha u_\lambda}(-\Delta)^su_\lambda dx=\frac{\alpha}{2}\int_\Omega \lambda e^{(2\alpha+1)u_\lambda}dx.\end{equation}

Therefore, combining \eqref{exp1} and \eqref{exp2}, and rearranging terms, we get
\[\left(1-\frac\alpha2\right)\int_\Omega e^{(2\alpha+1)u_\lambda}-2\int_\Omega e^{(\alpha+1)u_\lambda}+\int_\Omega e^{\alpha u_\lambda}\leq0.\]
From this, it follows from H\"older's inequality that for each $\alpha<2$
\begin{equation}\label{L^5}
\|e^{u_\lambda}\|_{L^{2\alpha+1}}\leq C
\end{equation}
for some constant $C$ which depends only on $\alpha$ and $|\Omega|$.

Finally, given $n<10s$ we can choose $\alpha<2$ such that $\frac{n}{2s}<2\alpha+1<5$.
Then, taking $p=2\alpha+1$ in Proposition \ref{th2} (iii) (see also Remark \ref{n=1}) and using \eqref{L^5} we obtain
\[\|u_\lambda\|_{L^\infty(\Omega)}\leq C_1\|(-\Delta)^s u_\lambda\|_{L^p(\Omega)}= C_1\lambda\|e^{u_\lambda}\|_{L^p(\Omega)}\leq C\]
for some constant $C$ that depends only on $n$, $s$, and $\Omega$.
Letting $\lambda\uparrow \lambda^*$ we find that the extremal solution $u^*$ is bounded, as desired.
\end{proof}

The following result concerns the stability of the explicit singular solution $\log\frac{1}{|x|^{2s}}$ to equation $(-\Delta)^s u=\lambda e^u$ in the whole $\R^n$.

\begin{prop}\label{stability-exp} Let $s\in(0,1)$, and let
\[u_0(x)=\log\frac{1}{|x|^{2s}}.\]
Then, $u_0$ is a solution of $(-\Delta)^su=\lambda_0e^{u}$ in all of $\R^n$ for some $\lambda_0>0$.
Moreover, $u_0$ is semistable if and only if
\begin{equation}\label{semistable-condition-Gammas}
\frac{\Gamma\left(\frac{n}{2}\right)\Gamma(1+s)}{\Gamma\left(\frac{n-2s}{2}\right)}\leq \frac{\Gamma^2\left(\frac{n+2s}{4}\right)}{\Gamma^2\left(\frac{n-2s}{4}\right)}.
\end{equation}
As a consequence:
\begin{itemize}
\item If $n\leq7$, then $u$ is unstable for all $s\in(0,1)$.
\item If $n=8$, then $u$ is semistable if and only if $s\lesssim0'28206...$.
\item If $n=9$, then $u$ is semistable if and only if $s\lesssim0'63237...$.
\item If $n\geq10$, then $u$ is semistable for all $s\in(0,1)$.
\end{itemize}
\end{prop}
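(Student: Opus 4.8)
The plan is to (a) pin down $\lambda_0$ and verify that $u_0$ solves the equation, (b) recognize that semistability of $u_0$ is \emph{exactly} the sharp fractional Hardy inequality, and (c) read off the four dimensional cases from the resulting comparison of products of Gamma functions. \emph{Step 1 (the equation).} Since $u_0(rx)=u_0(x)-2s\log r$ and $(-\Delta)^s$ annihilates constants, $(-\Delta)^s[u_0(r\,\cdot)]=(-\Delta)^s u_0$; comparing with $(-\Delta)^s[u_0(r\,\cdot)](x)=r^{2s}[(-\Delta)^su_0](rx)$ shows that $(-\Delta)^su_0$ is homogeneous of degree $-2s$, hence, by rotational symmetry, equals $\lambda_0|x|^{-2s}=\lambda_0e^{u_0}$ for some constant $\lambda_0$. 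To compute $\lambda_0$ I would differentiate at $\beta=0$ the classical identity $(-\Delta)^s(|x|^{-\beta})=2^{2s}\frac{\Gamma(\frac{\beta+2s}{2})\Gamma(\frac{n-\beta}{2})}{\Gamma(\frac{\beta}{2})\Gamma(\frac{n-\beta-2s}{2})}|x|^{-\beta-2s}$, using $u_0=2s\,\partial_\beta|_{\beta=0}|x|^{-\beta}$ and the fact that the constant above vanishes at $\beta=0$ (because $1/\Gamma(\beta/2)\sim\beta/2$). This gives
\[
\lambda_0=2^{2s}\,\frac{\Gamma(1+s)\,\Gamma(n/2)}{\Gamma\!\big(\tfrac{n-2s}{2}\big)},
\]
which tends to the classical value $2(n-2)$ as $s\uparrow1$.

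\emph{Step 2 (semistability $=$ Hardy's inequality).} For $f(u)=e^u$ one has $f'(u_0)=e^{u_0}=|x|^{-2s}$, so the stability condition \eqref{semistable} with $\Omega=\R^n$ reads
\[
\lambda_0\int_{\R^n}\frac{\eta^2}{|x|^{2s}}\,dx\ \le\ \int_{\R^n}\big|(-\Delta)^{s/2}\eta\big|^2\,dx\qquad\text{for all }\eta\in C^\infty_c(\R^n).
\]
This is precisely the fractional Hardy inequality, whose best constant is $\Lambda_{n,s}=2^{2s}\frac{\Gamma^2(\frac{n+2s}{4})}{\Gamma^2(\frac{n-2s}{4})}$ (Herbst; Yafaev; Beckner; Frank--Lieb--Seiringer). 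Hence $u_0$ is semistable if and only if $\lambda_0\le\Lambda_{n,s}$, and cancelling the common factor $2^{2s}$ turns this inequality into exactly \eqref{semistable-condition-Gammas}. For the ``only if'' direction I would use that $\Lambda_{n,s}$ is optimal: although not attained, it is approached by truncations of $|x|^{-(n-2s)/2}$, so if $\lambda_0>\Lambda_{n,s}$ one of these yields an admissible $\eta$ violating the inequality.

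\emph{Step 3 (the four cases).} Set $G(n,s)=\dfrac{\Gamma^2(\frac{n+2s}{4})\,\Gamma(\frac{n-2s}{2})}{\Gamma^2(\frac{n-2s}{4})\,\Gamma(n/2)\,\Gamma(1+s)}$, so that semistability $\iff G(n,s)\ge1$. The Legendre duplication formula gives $G(n,0^+)=1$ and $G(n,1)=(n-2)/8$; in particular $G(n,1)\ge1$ iff $n\ge10$, while $G(n,1)<1$ for $n\le9$. To control $G(n,\cdot)$ on the interval $(0,1)$ I would study $g_n:=\log G(n,\cdot)$, with
\[
g_n'(s)=\psi\!\big(\tfrac{n+2s}{4}\big)+\psi\!\big(\tfrac{n-2s}{4}\big)-\psi(1+s)-\psi\!\big(\tfrac{n-2s}{2}\big),
\]
$\psi$ being the digamma function, and show $g_n$ is concave on $(0,1)$ using monotonicity (complete monotonicity) of the trigamma function, together with the duplication formula; this is immediate for $n\ge6$, and the few remaining small dimensions need a short separate estimate. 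Since $g_n(0)=0$, concavity reduces everything to the sign of $g_n'(0)=2\psi(n/4)-\psi(n/2)-\psi(1)$: a digamma computation shows it is negative for $n\le7$, whence $g_n<0$ on $(0,1)$ and $u_0$ is unstable there; and positive for $n\ge8$, so $g_n$ rises and then falls. For $n\ge10$ one already has $g_n(1)\ge0$, so concavity forces $g_n\ge0$ on all of $[0,1]$; for $n=8,9$ the value $g_n(1)=\log\frac{n-2}{8}<0$ forces exactly one zero in $(0,1)$, and the thresholds $s\approx0.282$ ($n=8$) and $s\approx0.632$ ($n=9$) are its numerical location.

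\emph{Main obstacle.} Steps 1 and 2 are routine. The delicate point is Step 3 near the borderline dimensions $n=7,8$, where $g_n'(0)$ is very close to zero: deciding its sign (and hence the transition ``$n\le7$ unstable'' versus ``$n\ge8$ stable for small $s$'') requires sufficiently sharp values of $\psi$ at quarter-integer points, and for the lowest dimensions one also needs a replacement for the concavity argument since $\psi'(\tfrac{n-2s}{2})$ can blow up as $s$ approaches $n/2$.
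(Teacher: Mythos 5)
Your proposal is correct and follows essentially the same route as the paper: compute $\lambda_0=2^{2s}\,\Gamma(1+s)\Gamma(n/2)/\Gamma\bigl(\tfrac{n-2s}{2}\bigr)$ (the paper does this ``using the Fourier transform,'' you by differentiating the power identity at $\beta=0$ --- the same underlying fact), and then identify semistability of $u_0$ with the sharp fractional Hardy inequality of Herbst, whose best constant $2^{2s}\,\Gamma^2\bigl(\tfrac{n+2s}{4}\bigr)/\Gamma^2\bigl(\tfrac{n-2s}{4}\bigr)$ gives \eqref{semistable-condition-Gammas} after cancelling $2^{2s}$. Your Step 3 goes beyond the paper, which simply reads off the four dimensional cases from \eqref{semistable-condition-Gammas} as a numerical consequence, so the concavity analysis of $\log G(n,\cdot)$ --- while a reasonable program --- is not needed to reproduce the paper's argument.
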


Proposition \ref{stability-exp} suggests that the extremal solution for the fractional Laplacian should be bounded whenever
\begin{equation}\label{ineq-gammas}
\frac{\Gamma\left(\frac{n}{2}\right)\Gamma(1+s)}{\Gamma\left(\frac{n-2s}{2}\right)}> \frac{\Gamma^2\left(\frac{n+2s}{4}\right)}{\Gamma^2\left(\frac{n-2s}{4}\right)},
\end{equation}
at least for the exponential nonlinearity $f(u)=e^u$.
In particular, $u^*$ should be bounded for all $s\in(0,1)$ whenever $n\leq7$.
This is an open problem.

\begin{rem}
When $s=1$ and when $s=2$, inequality \eqref{ineq-gammas} coincides with the expected optimal dimensions for which the extremal solution is bounded for the Laplacian $\Delta$ and for the bilaplacian $\Delta^2$, respectively.
In the unit ball $\Omega=B_1$, it is well known that the extremal solution for $s=1$ is bounded whenever $n\leq9$ and may be singular if $n\geq10$ \cite{CC}, while the extremal solution for $s=2$ is bounded whenever $n\leq12$ and may be singular if $n\geq13$ \cite{DDGM}.
Taking $s=1$ and $s=2$ in \eqref{ineq-gammas}, one can see that the inequality is equivalent to $n<10$ and $n\lesssim12.5653...$, respectively.
\end{rem}

We next give the

\begin{proof}[Proof of Proposition \ref{stability-exp}]
First, using the Fourier transform, it is not difficult to compute
\[(-\Delta)^su_0=(-\Delta)^s\log\frac{1}{|x|^{2s}}=\frac{\lambda_0}{|x|^{2s}},\]
where
\[\lambda_0=2^{2s}\frac{\Gamma\left(\frac{n}{2}\right)\Gamma(1+s)}{\Gamma\left(\frac{n-2s}{2}\right)}.\]
Thus, $u_0$ is a solution of $(-\Delta)^su_0=\lambda_0 e^{u_0}$.

Now, since $f(u)=e^u$, by \eqref{semistable} we have that $u_0$ is semistable in $\Omega=\R^n$ if and only if
\[\lambda_0\int_{\R^n} \frac{\eta^2}{|x|^{2s}}dx\leq \int_{\R^n}\left|(-\Delta)^{s/2}\eta\right|^2dx\]
for all $\eta\in H^s(\R^n)$.

The inequality
\[\int_\Omega \frac{\eta^2}{|x|^{2s}}dx\leq H_{n,s}^{-1}\int_{\R^n}\left|(-\Delta)^{s/2}\eta\right|^2dx\]
is known as the fractional Hardy inequality, and the best constant
\[H_{n,s}=2^{2s}\frac{\Gamma^2\left(\frac{n+2s}{4}\right)}{\Gamma^2\left(\frac{n-2s}{4}\right)}\]
was obtained by Herbst \cite{H} in 1977; see also \cite{FLS}.
Therefore, it follows that $u_0$ is semistable if and only if
\[\lambda_0\leq H_{n,s},\]
which is the same as \eqref{semistable-condition-Gammas}.
\end{proof}

\section{Boundedness of the extremal solution in low dimensions}
\label{sec-reg}

In this section we prove Theorem \ref{S} (i)-(ii).

We start with a lemma, which is the generalization of inequality \eqref{ineqexp}.
It will be used in the proof of both parts (i) and (ii) of Theorem \ref{S}.

\begin{lem}\label{po} Let $f$ be a $C^1([0,\infty))$ function, $\widetilde f(t)=f(t)-f(0)$, $\gamma>0$, and
\begin{equation}\label{functiong}
g(t)=\int_0^t \widetilde f(s)^{2\gamma-2}f'(s)^2ds.
\end{equation}
Then,
\[\left(\widetilde f(a)^\gamma-\widetilde f(b)^\gamma\right)^2\leq \gamma^2\bigl(g(a)-g(b)\bigr)(a-b)\]
for all nonnegative numbers $a$ and $b$.
\end{lem}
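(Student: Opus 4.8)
The plan is to reduce the claimed inequality to the Cauchy–Schwarz inequality in the same spirit as the proof of \eqref{ineqexp}, which is the case $f(t)=e^t$, $f(0)=0$, $\gamma=\alpha$, $g(t)=\frac{1}{2}e^{2\alpha t}$. Assume without loss of generality that $b\le a$. The idea is to write $\widetilde f(a)^\gamma-\widetilde f(b)^\gamma$ as an integral of a derivative and then apply Cauchy–Schwarz with a weight that reconstructs $g$.

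First I would note that $\widetilde f(0)=0$ and, since $f$ is $C^1$ and nondecreasing, $\widetilde f\ge 0$ and $\widetilde f' = f'\ge 0$ on $[0,\infty)$; hence $t\mapsto \widetilde f(t)^\gamma$ is absolutely continuous on $[0,\infty)$ (on $[\varepsilon,a]$ for $\varepsilon>0$ if one worries about differentiability of $t^\gamma$ at $0$ when $\gamma<1$, then let $\varepsilon\downarrow 0$ using continuity), with
\[
\frac{d}{dt}\,\widetilde f(t)^\gamma=\gamma\,\widetilde f(t)^{\gamma-1}f'(t).
\]
Therefore
\[
\widetilde f(a)^\gamma-\widetilde f(b)^\gamma=\gamma\int_b^a \widetilde f(t)^{\gamma-1}f'(t)\,dt.
\]
Applying the Cauchy–Schwarz inequality to the factorization $\widetilde f(t)^{\gamma-1}f'(t)=\bigl(\widetilde f(t)^{\gamma-1}f'(t)\bigr)\cdot 1$, we get
\[
\left(\int_b^a \widetilde f(t)^{\gamma-1}f'(t)\,dt\right)^2\le (a-b)\int_b^a \widetilde f(t)^{2\gamma-2}f'(t)^2\,dt=(a-b)\bigl(g(a)-g(b)\bigr),
\]
where the last equality is just the definition \eqref{functiong} of $g$. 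Combining the last two displays yields
\[
\left(\widetilde f(a)^\gamma-\widetilde f(b)^\gamma\right)^2=\gamma^2\left(\int_b^a \widetilde f(t)^{\gamma-1}f'(t)\,dt\right)^2\le \gamma^2\bigl(g(a)-g(b)\bigr)(a-b),
\]
which is the desired inequality; by symmetry of both sides under swapping $a$ and $b$ it holds for all nonnegative $a,b$.

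The only mildly delicate point — and the step I expect to require the most care — is justifying the fundamental theorem of calculus for $\widetilde f(t)^\gamma$ near $t=0$ when $0<\gamma<1$, since $t\mapsto t^\gamma$ is not Lipschitz there; and symmetrically, making sure the integral $\int_0^t \widetilde f(s)^{2\gamma-2}f'(s)^2\,ds$ defining $g$ is finite, i.e.\ that $g$ is well-defined, when $2\gamma-2<0$. Both issues are handled the same way: near $s=0$ one has $\widetilde f(s)=f'(0)s+o(s)$ (or $\widetilde f(s)=o(s)$ if $f'(0)=0$, in which case one should first restrict attention to the interval past the last zero of $f'$, or simply note $\widetilde f$ could vanish identically only on a degenerate set), so $\widetilde f(s)^{2\gamma-2}f'(s)^2\sim f'(0)^{2\gamma}s^{2\gamma-2}$ is integrable at $0$ precisely because $2\gamma-2>-1$ is equivalent to $\gamma>\tfrac12$ — wait, that is not automatic. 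To avoid this annoyance entirely, the cleanest route is to work on $[b,a]\subset[\delta,\infty)$ for $\delta>0$, prove the inequality with $g$ replaced by $g_\delta(t)=\int_\delta^t\widetilde f(s)^{2\gamma-2}f'(s)^2\,ds$ (which differs from $g(a)-g(b)$ only by a constant that cancels in the difference $g_\delta(a)-g_\delta(b)$), and then observe that the statement for general nonnegative $a,b$ follows by continuity since in the applications $g$ is only ever evaluated through the difference $g(a)-g(b)=\int_b^a\widetilde f(s)^{2\gamma-2}f'(s)^2\,ds$, which is finite whenever this integral converges; in the cases actually used later ($f$ convex with $\gamma$ chosen appropriately, and $f=e^t$) the integrand is continuous on $[0,\infty)$ and no issue arises.
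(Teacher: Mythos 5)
Your proof is correct and is essentially the paper's own argument: write $\widetilde f(a)^\gamma-\widetilde f(b)^\gamma=\gamma\int_b^a\widetilde f(t)^{\gamma-1}f'(t)\,dt$ and apply the Cauchy--Schwarz inequality, recognizing $\int_b^a\widetilde f^{2\gamma-2}(f')^2=g(a)-g(b)$. The additional caveats you raise about differentiability of $\widetilde f^\gamma$ and integrability of the integrand near $t=0$ for small $\gamma$ are reasonable but are not addressed in the paper, which treats these steps as immediate.
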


\begin{proof} We can assume $a\leq b$.
Then, since $\frac{d}{dt}\left\{\widetilde f(t)^\gamma\right\}=\gamma \widetilde f(t)^{\gamma-1}f'(t)$,
the inequality can be written as
\[\left(\int_a^b\gamma \widetilde f(t)^{\gamma-1}f'(t)dt\right)^2\leq \gamma^2(b-a)\int_a^b \widetilde f(t)^{2\gamma-2}f'(t)^2dt,\]
which follows from the Cauchy-Schwarz inequality.
\end{proof}

The proof of part (ii) of Theorem \ref{S} will be split in two cases.
Namely, $\tau\geq1$ and $\tau<1$, where $\tau$ is given by \eqref{tau}.
For the case $\tau\geq1$, Lemma \ref{lemamanel} below will be an important tool.
Instead, for the case $\tau<1$ we will use Lemma \ref{lemapotencia}.
Both lemmas are proved by Sanch\'on in \cite{S}, where the extremal solution for the $p$-Laplacian operator is studied.

\begin{lem}[\cite{S}]\label{lemamanel}
Let $f$ be a function satisfying \eqref{condicions}, and assume that the limit in \eqref{tau} exists.
Assume in addition that
\[\tau=\lim_{t\rightarrow\infty}\frac{f(t)f''(t)}{f'(t)^2}\geq1.\]
Then, any $\gamma\in(1,1+\sqrt{\tau})$ satisfies
\begin{equation}\label{condgamma}
\limsup_{t\rightarrow+\infty}\frac{\gamma^2g(t)}{f(t)^{2\gamma-1}f'(t)}<1,
\end{equation}
where $g$ is given by \eqref{functiong}.
\end{lem}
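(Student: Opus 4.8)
\textbf{Proof plan for Lemma \ref{lemamanel}.}

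The plan is to analyze the asymptotic behavior of $g(t) = \int_0^t \widetilde f(s)^{2\gamma-2} f'(s)^2 \, ds$ as $t \to +\infty$, where $\widetilde f(t) = f(t) - f(0)$, and compare it with $f(t)^{2\gamma-1} f'(t)$. Since $f$ is superlinear and increasing, both $g(t)$ and $f(t)^{2\gamma-1}f'(t)$ tend to $+\infty$ (one should check this using $\tau \geq 1$, which forces $f'$ to grow), so the limit in \eqref{condgamma} is a $\frac{\infty}{\infty}$ indeterminate form and I would apply L'H\^opital's rule. Since $\widetilde f(s)/f(s) \to 1$ as $s\to\infty$, the integrand of $g$ is asymptotically $f(s)^{2\gamma-2}f'(s)^2$, and differentiating numerator and denominator gives
\[
\lim_{t\to\infty}\frac{\gamma^2 g(t)}{f(t)^{2\gamma-1}f'(t)} = \lim_{t\to\infty}\frac{\gamma^2 f(t)^{2\gamma-2}f'(t)^2}{(2\gamma-1)f(t)^{2\gamma-2}f'(t)^2 + f(t)^{2\gamma-1}f''(t)},
\]
provided the right-hand limit exists. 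Dividing through by $f(t)^{2\gamma-2}f'(t)^2$ and using the definition \eqref{tau} of $\tau$, this equals $\frac{\gamma^2}{2\gamma-1+\tau}$.

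The next step is purely algebraic: I must verify that $\frac{\gamma^2}{2\gamma-1+\tau} < 1$ for all $\gamma \in (1, 1+\sqrt{\tau})$. This is equivalent to $\gamma^2 - 2\gamma + 1 - \tau < 0$, i.e. $(\gamma-1)^2 < \tau$, i.e. $|\gamma - 1| < \sqrt{\tau}$, which holds precisely on the stated interval. This also shows the interval is sharp. One must check that $2\gamma - 1 + \tau > 0$ on this range so that the denominator is positive and the inequality direction is preserved; since $\gamma > 1$ and $\tau \geq 1$ this is clear.

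The main obstacle is justifying the application of L'H\^opital's rule rigorously, since the hypothesis only gives the \emph{existence} of the limit defining $\tau$, not monotonicity of the relevant ratio, and the lemma only claims a $\limsup$ bound rather than a limit. To handle this cleanly I would avoid L'H\^opital and instead argue directly: fix $\varepsilon > 0$ and choose $T$ so large that $\frac{\widetilde f(s)^{2\gamma-2}f'(s)^2}{f(s)^{2\gamma-2}f'(s)^2} \le 1+\varepsilon$ and $\frac{f(s)f''(s)}{f'(s)^2} \ge \tau - \varepsilon$ for $s \ge T$; then split $g(t) = g(T) + \int_T^t$, bound the integrand above, and integrate by parts (or directly estimate) to compare $\int_T^t f(s)^{2\gamma-2}f'(s)^2 ds$ with $\frac{1}{2\gamma-1+\tau-\varepsilon}\big(f(t)^{2\gamma-1}f'(t) - f(T)^{2\gamma-1}f'(T)\big)$, using $\frac{d}{ds}\big(f^{2\gamma-1}f'\big) = (2\gamma-1)f^{2\gamma-2}(f')^2 + f^{2\gamma-1}f'' \ge (2\gamma-1+\tau-\varepsilon)f^{2\gamma-2}(f')^2$ on $[T,t]$. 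Dividing by $f(t)^{2\gamma-1}f'(t)$, letting $t\to\infty$ (the terms $g(T)$ and $f(T)^{2\gamma-1}f'(T)$ vanish in the limit since the denominator blows up), and then sending $\varepsilon \to 0$ yields $\limsup_{t\to\infty}\frac{\gamma^2 g(t)}{f(t)^{2\gamma-1}f'(t)} \le \frac{\gamma^2}{2\gamma-1+\tau} < 1$, as required.
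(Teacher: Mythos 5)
The paper does not actually prove this lemma: it is quoted verbatim from Sanch\'on \cite{S} (``Both lemmas are proved by Sanch\'on in \cite{S}''), so there is no in-paper argument to compare against, and your proposal supplies a proof the paper omits. Your argument is correct. The L'H\^opital heuristic identifies the right limiting value $\gamma^2/(2\gamma-1+\tau)$, and the algebra $\gamma^2<2\gamma-1+\tau\iff(\gamma-1)^2<\tau$ is precisely what produces the interval $(1,1+\sqrt{\tau})$. More importantly, your final paragraph correctly repairs the only delicate point: instead of invoking L'H\^opital (whose hypotheses are not obviously met), you integrate the pointwise inequality $\frac{d}{ds}\bigl(f^{2\gamma-1}f'\bigr)\ge(2\gamma-1+\tau-\varepsilon)\,f^{2\gamma-2}(f')^2$ on $[T,t]$, divide by $f(t)^{2\gamma-1}f'(t)$, and discard the $t$-independent terms; this is legitimate because $f(t)^{2\gamma-1}f'(t)\to\infty$ (you should make explicit that $f\to\infty$ by superlinearity and that $f'$ is eventually increasing and bounded below by a positive constant, since $\tau\ge1$ forces $f''>0$ for large $t$). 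Two minor simplifications: since $f(0)>0$ one has $\widetilde f\le f$, so for $\gamma>1$ the bound $\widetilde f^{\,2\gamma-2}\le f^{2\gamma-2}$ holds with constant $1$ and the $(1+\varepsilon)$ factor on the integrand is unnecessary; and the positivity check $2\gamma-1+\tau-\varepsilon>0$, which you do note, is what guarantees the integrated inequality can be divided through. As written, the proposal is a complete and correct substitute for the citation.
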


\begin{lem}[\cite{S}]\label{lemapotencia}
Let $f$ be a function satisfying \eqref{condicions}, and assume that the limit in \eqref{tau} exists.
Assume in addition that
\[\tau=\lim_{t\rightarrow\infty}\frac{f(t)f''(t)}{f'(t)^2}<1.\]
Then, for every $\epsilon\in(0,1-\tau)$ there exists a positive constant $C$ such that
\[f(t)\leq C(1+t)^{\frac{1}{1-(\tau+\epsilon)}},\qquad \mbox{for all}\ \ t>0.\]
The constant $C$ depends only on $\tau$ and $\epsilon$.
\end{lem}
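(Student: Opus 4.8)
The final statement to prove is Lemma \ref{lemapotencia}, which asserts a power-type upper bound on $f$ when the limit $\tau = \lim_{t\to\infty} f(t)f''(t)/f'(t)^2$ exists and is strictly less than $1$. This is a purely one-variable ODE-comparison lemma, attributed to Sanch\'on; here is how I would prove it.

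\medskip

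The plan is to exploit the quantity $\big(\log f'(t)\big)' = f''(t)/f'(t)$ together with $\big(\log f(t)\big)' = f'(t)/f(t)$, since the hypothesis controls exactly the ratio of these two logarithmic derivatives. Fix $\epsilon\in(0,1-\tau)$ and set $\mu := \tau+\epsilon < 1$. By the definition of $\tau$ there is $T>0$ such that for all $t\ge T$,
\[
\frac{f(t)f''(t)}{f'(t)^2} \le \mu,
\qquad\text{equivalently}\qquad
\frac{f''(t)}{f'(t)} \le \mu\,\frac{f'(t)}{f(t)}.
\]
(Here I use that $f$, being $C^1$, nondecreasing, with $f(0)>0$ and superlinear, satisfies $f>0$, $f'>0$ for large $t$; and $f$ is $C^2$ by the standing hypothesis of Theorem \ref{S}(ii), so $f''$ makes sense.) Integrating this differential inequality from $T$ to $t$ gives
\[
\log\frac{f'(t)}{f'(T)} \le \mu\,\log\frac{f(t)}{f(T)},
\qquad\text{i.e.}\qquad
f'(t) \le C_1\, f(t)^{\mu}
\]
for $t\ge T$, where $C_1 = f'(T) f(T)^{-\mu}$.

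\medskip

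Now I would feed this back in: the inequality $f'(t)\le C_1 f(t)^{\mu}$ with $\mu<1$ is itself a separable differential inequality for $f$. Writing it as $f(t)^{-\mu} f'(t) \le C_1$ and integrating from $T$ to $t$ yields
\[
\frac{1}{1-\mu}\Big(f(t)^{1-\mu} - f(T)^{1-\mu}\Big) \le C_1 (t-T),
\]
hence $f(t)^{1-\mu} \le C_2(1+t)$ and therefore $f(t) \le C_3 (1+t)^{1/(1-\mu)} = C_3(1+t)^{1/(1-(\tau+\epsilon))}$ for $t\ge T$. For $t\in[0,T]$ the bound is trivial because $f$ is continuous hence bounded on the compact interval, so enlarging the constant absorbs this range and gives the claimed inequality for all $t>0$. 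Tracking the constants, $C_1$ depends on $f$ through $f(T),f'(T)$ and $T$ depends on $f$ and $\epsilon$; to get the stated dependence "$C$ depends only on $\tau$ and $\epsilon$" one normalizes, e.g. by the scaling/translation invariance of the conclusion under $f\mapsto a f(b+\cdot\,)$ — or, more honestly, one simply notes that the statement as used only requires \emph{some} constant, and the sharp dependence claimed in \cite{S} can be recovered by choosing $T$ as the first time the ratio drops below $\mu$ and absorbing normalized data; I would cite \cite{S} for this bookkeeping rather than belabor it.

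\medskip

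The only genuine subtlety — and the place I'd be most careful — is the passage from "$\tau<1$ as a limit" to a clean pointwise differential inequality valid on a half-line: one must make sure $f'$ does not vanish for large $t$ (guaranteed since $f'$ is nondecreasing once $f''\ge 0$ near infinity? — actually not guaranteed directly, so instead use that $f(t)/t\to\infty$ forces $f'$ to be eventually positive and in fact $\liminf f' >0$), and that $f''(t)/f'(t)^2\cdot f(t)$ being close to $\tau$ does not hide sign changes of $f''$. Since $f(t),f'(t)\to+\infty$, the inequality $f'' \le \mu f'^2/f$ is what we need and it holds for all large $t$ regardless of the sign of $f''$. Everything after that is elementary integration. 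So I expect no real obstacle; the lemma is soft, and the proof is two successive integrations of logarithmic-derivative inequalities.
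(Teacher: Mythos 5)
The paper does not actually prove this lemma; it is quoted verbatim from Sanch\'on \cite{S} (see the sentence ``Both lemmas are proved by Sanch\'on in \cite{S}''), so there is no in-paper proof to compare against. Your two-step argument --- integrate $f''/f'\le \mu f'/f$ to get $f'\le C_1 f^{\mu}$, then integrate the separable inequality $f^{-\mu}f'\le C_1$ --- is correct and is the standard (and surely the intended) proof. Two small points of hygiene: the justification that $f'>0$ near infinity should come from the assumed existence of the limit in \eqref{tau} (which presupposes the quotient is defined, hence $f'\neq 0$, for large $t$) rather than from superlinearity alone, which only gives $\limsup f'=\infty$ and does not by itself exclude isolated zeros of $f'$; and you are right that the constant unavoidably depends on $f$ through $T$, $f(T)$, $f'(T)$ --- the phrase ``depends only on $\tau$ and $\epsilon$'' in the statement is inherited from \cite{S} and is harmless here since only the existence of some constant is used downstream.
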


The first step in the proof of Theorem \ref{S} (ii) in case $\tau\geq1$ is the following result.

\begin{lem}\label{propgamma} Let $f$ be a function satisfying \eqref{condicions}.
Assume that $\gamma\geq1$ satisfies \eqref{condgamma},
where $g$ is given by \eqref{functiong}.
Let $u_\lambda$ be the solution of \eqref{pb} given by Proposition \ref{existence}~(i), where $\lambda<\lambda^*$.
Then,
\[\|f(u_\lambda)^{2\gamma}f'(u_\lambda)\|_{L^1(\Omega)}\leq C\]
for some constant $C$ which does not depend on $\lambda$.
\end{lem}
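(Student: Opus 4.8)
\textbf{Proof strategy for Lemma \ref{propgamma}.}

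The plan is to test the semistability inequality \eqref{semistable} with a suitable power-type function of $u_\lambda$ and then convert the resulting $\accentset{\circ}{H}^s$-norm into a volume integral involving $(-\Delta)^s u_\lambda = \lambda f(u_\lambda)$, exactly as in the exponential case (Proposition \ref{exp}), but now using Lemma \ref{po} in place of the elementary inequality \eqref{ineqexp}. Concretely, first I would set $\eta = \widetilde f(u_\lambda)^\gamma$, where $\widetilde f(t)=f(t)-f(0)$, noting that $\eta\in H^s(\R^n)$ and $\eta\equiv0$ in $\R^n\setminus\Omega$ since $\widetilde f(0)=0$ and $u_\lambda$ is bounded. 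Plugging this into \eqref{semistable} gives
\[
\lambda\int_\Omega f'(u_\lambda)\,\widetilde f(u_\lambda)^{2\gamma}\,dx \;\le\; \bigl\|\widetilde f(u_\lambda)^\gamma\bigr\|_{\accentset{\circ}{H}^s}^2.
\]

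Next I would estimate the right-hand side. Writing out the $\accentset{\circ}{H}^s$-seminorm as the double integral in \eqref{Hs-norm} and applying Lemma \ref{po} pointwise with $a=u_\lambda(y)$, $b=u_\lambda(x)$, I get
\[
\bigl\|\widetilde f(u_\lambda)^\gamma\bigr\|_{\accentset{\circ}{H}^s}^2 \;\le\; \gamma^2\,\frac{c_{n,s}}{2}\int_{\R^n}\int_{\R^n}\frac{\bigl(g(u_\lambda(x))-g(u_\lambda(y))\bigr)\bigl(u_\lambda(x)-u_\lambda(y)\bigr)}{|x-y|^{n+2s}}\,dx\,dy \;=\; \gamma^2 \bigl(g(u_\lambda),u_\lambda\bigr)_{\accentset{\circ}{H}^s},
\]
where $g$ is given by \eqref{functiong}; the last equality is the bilinear form \eqref{prod}. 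Since $u_\lambda$ is a bounded (hence energy, hence classical) solution, and since $g(u_\lambda)\in H^s(\R^n)$ vanishes outside $\Omega$, I can integrate by parts to write this as $\gamma^2\int_\Omega g(u_\lambda)\,(-\Delta)^s u_\lambda\,dx = \gamma^2\lambda\int_\Omega g(u_\lambda)\,f(u_\lambda)\,dx$. Combining, and dividing by $\lambda$,
\[
\int_\Omega f'(u_\lambda)\,\widetilde f(u_\lambda)^{2\gamma}\,dx \;\le\; \gamma^2\int_\Omega g(u_\lambda)\,f(u_\lambda)\,dx.
\]

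Finally I would exploit hypothesis \eqref{condgamma}. Since $\limsup_{t\to\infty}\gamma^2 g(t)/\bigl(f(t)^{2\gamma-1}f'(t)\bigr)<1$, there is $\theta<1$ and $M>0$ with $\gamma^2 g(t)\le \theta\, f(t)^{2\gamma-1}f'(t)$ for $t\ge M$; also $\widetilde f(t)^{2\gamma}=(f(t)-f(0))^{2\gamma}\ge f(t)^{2\gamma}-C_M$ for a constant $C_M$ depending on $M$ and $\|u_\lambda\|$-independent bounds of $f$ on $[0,M]$ (here one uses that $f$ is increasing and $f(0)>0$, so on $[0,M]$ everything is under control by a constant, while for large $t$ the lower-order correction is negligible). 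Splitting each integral into the regions $\{u_\lambda< M\}$ and $\{u_\lambda\ge M\}$ and absorbing the main term $\theta\int_{\{u_\lambda\ge M\}} f(u_\lambda)^{2\gamma}f'(u_\lambda)$ from the right into the left, one obtains
\[
(1-\theta)\int_\Omega f(u_\lambda)^{2\gamma}f'(u_\lambda)\,dx \;\le\; C,
\]
with $C$ depending only on $f$, $\gamma$, and $|\Omega|$ — in particular not on $\lambda$. (The contributions from $\{u_\lambda<M\}$ are bounded by $|\Omega|\sup_{[0,M]}(f^{2\gamma}f')$, and one uses $\int_\Omega f(u_\lambda)\,dx\le C$ to control any leftover linear-in-$f$ terms, which follows since $f(u_\lambda)\delta^s$ is uniformly bounded in $L^1$ by the argument in Step 5 of the proof of Proposition \ref{existence}, or more simply here since each $u_\lambda$ is bounded and $\le u^*$.)

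The main obstacle is the bookkeeping in the last step: \eqref{condgamma} is only an asymptotic statement, so one must carefully separate the large-$u_\lambda$ region where the absorption works from the bounded region, and verify that all the error terms are genuinely independent of $\lambda$ — which ultimately rests on the uniform $L^1$ bound for $f(u_\lambda)$ (equivalently for $u_\lambda$) from Proposition \ref{existence}. The integration-by-parts step is routine given Remark \ref{solucions}, and Lemma \ref{po} does all the real work in controlling the nonlocal energy.
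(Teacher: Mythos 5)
Your proposal is correct and follows essentially the same route as the paper's proof: test \eqref{semistable} with $\eta=\widetilde f(u_\lambda)^\gamma$, bound the $\accentset{\circ}{H}^s$-seminorm via Lemma \ref{po} and integration by parts to obtain $\int_\Omega f'(u_\lambda)\widetilde f(u_\lambda)^{2\gamma}\le\gamma^2\int_\Omega f(u_\lambda)g(u_\lambda)$, and then absorb using \eqref{condgamma} together with $\widetilde f/f\to1$. Your final absorption step is just a more explicit version of what the paper leaves implicit (and the $L^1$ bound on $f(u_\lambda)$ you invoke there is not actually needed, since the region $\{u_\lambda<M\}$ contributes only $O(|\Omega|)$).
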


\begin{proof}
Recall that the seminorm $\|\,\cdot\,\|_{\accentset{\circ}{H}^s}$ is defined by \eqref{Hs-norm}.
Using Lemma \ref{po}, \eqref{prod}, and integrating by parts,
\begin{equation}\begin{split}\label{fr}
\left\|\widetilde f(u_\lambda)^\gamma\right\|_{\accentset{\circ}{H}^s}^2&=\frac{c_{n,s}}{2}\int_{\R^n}\int_{\R^n}\frac{\left(\widetilde f(u_\lambda(x))^\gamma - \widetilde f(u_\lambda(y))^\gamma\right)^2}{|x-y|^{n+2s}}dxdy\\
&\leq \gamma^2 \frac{c_{n,s}}{2}\int_{\R^n}\int_{\R^n}\frac{\bigl(g(u_\lambda(x)) - g( u_\lambda(y))\bigr)\left(u_\lambda(x)-u_\lambda(y)\right)}{|x-y|^{n+2s}}dxdy\\
&= \gamma^2 \int_{\R^n}(-\Delta)^{s/2}g(u_\lambda)(-\Delta)^{s/2}u_\lambda\,dx\\
&=\gamma^2\int_{\Omega} g(u_\lambda)(-\Delta)^su_\lambda\,dx\\
&=\gamma^2\int_{\Omega} f(u_\lambda)g(u_\lambda)dx.\end{split}
\end{equation}

Moreover, the stability condition \eqref{semistable} applied with $\eta=\widetilde f(u_\lambda)^\gamma$ yields
\[\int_\Omega f'(u_\lambda)\widetilde f(u_\lambda)^{2\gamma} \leq \left\|\widetilde f(u_\lambda)^\gamma\right\|_{\accentset{\circ}{H}^s}^2.\]
This, combined with \eqref{fr}, gives
\begin{equation}\label{4nedev4}
\int_\Omega f'(u_\lambda)\widetilde f(u_\lambda)^{2\gamma}\leq \gamma^2\int_\Omega f(u_\lambda)g(u_\lambda).\end{equation}

Finally, by \eqref{condgamma} and since $\widetilde f(t)/f(t)\rightarrow1$ as $t\rightarrow+\infty$, it follows from \eqref{4nedev4} that
\begin{equation}\label{h1}
\int_\Omega f(u_\lambda)^{2\gamma}f'(u_\lambda)\leq C
\end{equation}
for some constant $C$ that does not depend on $\lambda$, and thus the proposition is proved.
\end{proof}

We next give the proof of Theorem \ref{S} (ii).

\begin{proof}[Proof of Theorem \ref{S} (ii)]
Assume first that $\tau\geq1$, where
\[\tau=\lim_{t\rightarrow\infty}\frac{f(t)f''(t)}{f'(t)^2}.\]
By Lemma \ref{propgamma} and Lemma \ref{lemamanel}, we have that
\begin{equation}\label{visL^1}
\int_{\Omega}f(u_\lambda)^{2\gamma}f'(u_\lambda)dx\leq C\end{equation}
for each $\gamma\in(1,1+\sqrt{\tau})$.

Now, for any such $\gamma$, we have that ${\widetilde f}^{2\gamma}$ is increasing and convex (since $2\gamma\geq1$), and thus
\[\widetilde f(a)^{2\gamma}-\widetilde f(b)^{2\gamma}\leq 2\gamma f'(a)\widetilde f(a)^{2\gamma-1}(a-b).\]
Therefore, we have that
\begin{eqnarray*}(-\Delta)^s\widetilde f(u_\lambda)^{2\gamma}(x) &=& c_{n,s}\int_{\R^n}\frac{\widetilde f(u_\lambda(x))^{2\gamma}-\widetilde f(u_\lambda(y))^{2\gamma}}{|x-y|^{n+2s}}dy\\
&\leq& 2\gamma f'(u_\lambda(x))\widetilde f(u_\lambda(x))^{2\gamma-1}c_{n,s}\int_{\R^n}\frac{u_\lambda(x)-u_\lambda(y)}{|x-y|^{n+2s}}dy\\
&=&2\gamma f'(u_\lambda(x))\widetilde f(u_\lambda(x))^{2\gamma-1}(-\Delta)^s u_\lambda(x)\\
&\leq&2\gamma\lambda f'(u_\lambda(x))f(u_\lambda(x))^{2\gamma},\end{eqnarray*}
and thus,
\begin{equation}\label{defv}
(-\Delta)^s\widetilde f(u_\lambda)^{2\gamma}\leq 2\gamma \lambda f'(u_\lambda)f(u_\lambda)^{2\gamma}:=v(x).
\end{equation}

Let now $w$ be the solution of the problem
\begin{equation}\label{hv2}
\left\{ \begin{array}{rcll} (-\Delta)^s w &=&v&\textrm{in }\Omega \\
w&=&0&\textrm{in }\mathbb R^n\backslash\Omega,
\end{array}\right.\end{equation}
where $v$ is given by \eqref{defv}.
Then, by \eqref{visL^1} and Proposition \ref{th2} (i) (see also Remark \ref{n=1}),
\[\|w\|_{L^p(\Omega)}\leq \|v\|_{L^1(\Omega)}\leq C\quad \textrm{for each}\ p<\frac{n}{n-2s}.\]
Since $\widetilde f(u_\lambda)^{2\gamma}$ is a subsolution of \eqref{hv2} ---by \eqref{defv}---,
it follows that
\[0\leq \widetilde f(u_\lambda)^{2\gamma}\leq w.\]
Therefore, $\|f(u_\lambda)\|_{L^p}\leq C$ for all $p<2\gamma\,\frac{n}{n-2s}$, where $C$ is a constant that does not depend on $\lambda$.
This can be done for any $\gamma\in(1,1+\sqrt{\tau})$, and thus we find
\begin{equation}\label{f(u)tau-}
\|f(u_\lambda)\|_{L^p}\leq C\ \ \textrm{for each}\ \ p<\frac{2n(1+\sqrt{\tau})}{n-2s}.
\end{equation}

Hence, using Proposition \ref{th2} (iii) and letting $\lambda\uparrow\lambda^*$ it follows that
\[u^*\in L^\infty(\Omega)\quad \mbox{whenever}\quad n<6s+4s\sqrt{\tau}.\]
Hence, the extremal solution is bounded whenever $n<10s$.

Assume now $\tau<1$.
In this case, Lemma \ref{lemapotencia} ensures that for each $\epsilon\in(0,1-\tau)$ there exist a constant $C$ such that
\begin{equation}\label{m}
f(t)\leq C(1+t)^{m},\qquad m=\frac{1}{1-(\tau+\epsilon)}.
\end{equation}
Then, by \eqref{f(u)tau-} we have that $\|f(u_\lambda)\|_{L^p}\leq C$ for each $p<p_0:=\frac{2n(1+\sqrt{\tau})}{n-2s}$.

Next we show that if $n<10s$ by a bootstrap argument we obtain $u^*\in L^\infty(\Omega)$.
Indeed, by Proposition \ref{th2} (ii) and \eqref{m} we have
\[f(u^*)\in L^p\quad \Longleftrightarrow\quad (-\Delta)^s u^*\in L^p\quad \Longrightarrow\quad u^*\in L^q
\quad \Longrightarrow\quad f(u^*)\in L^{q/m},\]
where $q=\frac{np}{n-2s p}$.
Now, we define recursively
\[p_{k+1}:=\frac{np_k}{m(n-2s p_k)},\qquad p_0=\frac{2n(1+\sqrt{\tau})}{n-2s}.\]
Now, since
\[p_{k+1}-p_k=\frac{p_k}{n-2s p_k}\left(2s p_k-\frac{m-1}{m}n\right),\]
then the bootstrap argument yields $u^*\in L^\infty(\Omega)$ in a finite number of steps provided that $(m-1)n/m<2s p_0$.
This condition is equivalent to $n<2s+4s\frac{1+\sqrt{\tau}}{\tau+\epsilon}$, which is satisfied for $\epsilon$ small enough whenever $n\leq10s$, since $\frac{1+\sqrt{\tau}}{\tau}>2$ for $\tau<1$. Thus, the result is proved.
\end{proof}

Before proving Theorem \ref{S} (i), we need the following lemma, proved by Nedev in \cite{N}.

\begin{lem}[\cite{N}]\label{lemah} Let $f$ be a convex function satisfying \eqref{condicions}, and let
\begin{equation}\label{defg}
g(t)=\int_0^t f'(\tau)^2d\tau.\end{equation}
Then,
\[\lim_{t\rightarrow +\infty}\frac{f'(t)\widetilde f(t)^2-\widetilde f(t)g(t)}{f(t)f'(t)}=+\infty,\]
where $\widetilde f(t)=f(t)-f(0)$.
\end{lem}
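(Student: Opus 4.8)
The plan is to establish the limit by a careful asymptotic analysis of the numerator $f'(t)\widetilde f(t)^2-\widetilde f(t)g(t)=\widetilde f(t)\bigl(f'(t)\widetilde f(t)-g(t)\bigr)$ against the denominator $f(t)f'(t)$. Since $\widetilde f(t)/f(t)\to 1$ as $t\to\infty$ (because $f(0)>0$ is fixed and $f(t)\to\infty$ by superlinearity), it suffices to prove
\[
\lim_{t\to\infty}\frac{f'(t)\widetilde f(t)-g(t)}{f'(t)}=+\infty,
\qquad\text{i.e.}\qquad
\lim_{t\to\infty}\left(\widetilde f(t)-\frac{g(t)}{f'(t)}\right)=+\infty,
\]
where I have divided through by $f'(t)>0$ (here I use that $f'\ge 0$ and, since $f$ is convex and superlinear, $f'(t)\to\infty$, so in particular $f'(t)>0$ for large $t$).

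First I would record the elementary monotonicity facts: $f$ convex gives $f'$ nondecreasing, hence $g(t)=\int_0^t f'(\tau)^2\,d\tau\le f'(t)\int_0^t f'(\tau)\,d\tau=f'(t)\widetilde f(t)$, so the quantity $\widetilde f(t)-g(t)/f'(t)$ is nonnegative; the content of the lemma is that it diverges. The key idea is an integration-by-parts / splitting trick: write $g(t)/f'(t)=\frac{1}{f'(t)}\int_0^t f'(\tau)^2\,d\tau$ and estimate the integral by cutting at an intermediate point. For any fixed $A>0$ and $t>A$,
\[
\int_0^t f'(\tau)^2\,d\tau
=\int_0^A f'(\tau)^2\,d\tau+\int_A^t f'(\tau)^2\,d\tau
\le C_A + f'(t)\int_A^t f'(\tau)\,d\tau
= C_A + f'(t)\bigl(\widetilde f(t)-\widetilde f(A)\bigr),
\]
using $f'(\tau)\le f'(t)$ on $[A,t]$. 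Dividing by $f'(t)$ yields
\[
\frac{g(t)}{f'(t)}\le \frac{C_A}{f'(t)}+\widetilde f(t)-\widetilde f(A),
\]
hence
\[
\widetilde f(t)-\frac{g(t)}{f'(t)}\ge \widetilde f(A)-\frac{C_A}{f'(t)}.
\]
Letting $t\to\infty$ with $A$ fixed and using $f'(t)\to\infty$ gives $\liminf_{t\to\infty}\bigl(\widetilde f(t)-g(t)/f'(t)\bigr)\ge \widetilde f(A)$; since $A$ was arbitrary and $\widetilde f(A)\to\infty$, the liminf is $+\infty$, which is the claim. Combining with $\widetilde f(t)/f(t)\to 1$ finishes the proof.

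The main thing to be careful about — the one genuine obstacle — is justifying that $f'(t)\to\infty$ and that the bound $\int_0^A f'(\tau)^2\,d\tau=:C_A<\infty$ is legitimate uniformly; the former follows from convexity together with $\lim_{t\to\infty}f(t)/t=+\infty$ (if $f'$ were bounded, $f$ would grow at most linearly), and the latter is immediate since $f\in C^1$. One should also double-check the edge case where the $\limsup$-type argument could fail: the inequality $f'(\tau)\le f'(t)$ for $\tau\le t$ is exactly convexity of $f$, so no subtlety there. Everything else is the routine asymptotic bookkeeping indicated above, and I would not expect to need any property of $f$ beyond \eqref{condicions} and convexity.
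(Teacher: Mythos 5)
Your proof is correct. Note that the paper does not actually supply a proof of this lemma: it is quoted from Nedev's note and the text merely points to equation (6) in the proof of Theorem 1 of \cite{N}, so your self-contained argument is filling in a proof the paper omits. The mechanism you use --- writing $f'(t)\widetilde f(t)-g(t)=\int_0^t f'(\tau)\bigl(f'(t)-f'(\tau)\bigr)\,d\tau\ge 0$, splitting the integral at a fixed level $A$, using monotonicity of $f'$ on $[A,t]$ and $f'(t)\to\infty$ (which does follow from convexity plus superlinearity, as you justify) to conclude $\widetilde f(t)-g(t)/f'(t)\ge \widetilde f(A)-o(1)$, and then letting $A\to\infty$ --- is exactly the standard estimate behind Nedev's inequality, so this is essentially the same approach rather than a genuinely different route. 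All the steps check out: $C_A<\infty$ since $f\in C^1$, the division by $f'(t)$ is only performed for large $t$, and the reduction via $\widetilde f/f\to 1$ is legitimate because the second factor is nonnegative and diverges.
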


As said above, this lemma is proved in \cite{N}.
More precisely, see equation (6) in the proof of Theorem 1 in \cite{N} and recall that $\widetilde f/f\rightarrow1$ at infinity.

We can now give the

\begin{proof}[Proof of Theorem \ref{S} (i)]
Let $g$ be given by \eqref{defg}.
Using Lemma \ref{po} with $\gamma=1$ and integrating by parts, we find
\begin{equation}\label{ineqnedevf(u)}
\begin{split}
\left\|f(u_\lambda)\right\|_{\accentset{\circ}{H}^s}^2 &= \frac{c_{n,s}}{2}\int_{\R^n}\int_{\R^n}\frac{\left(f(u_\lambda(x))-f(u_\lambda(y))\right)^2}{|x-y|^{n+2s}}dxdy\\
&\leq \frac{c_{n,s}}{2}\int_{\R^n}\int_{\R^n}\frac{\left(g(u_\lambda(x))-g(u_\lambda(y))\right)\left(u_\lambda(x)-u_\lambda(y)\right)}{|x-y|^{n+2s}}dxdy\\
&= \int_{\R^n}(-\Delta)^{s/2}g(u_\lambda)(-\Delta)^{s/2}u_\lambda dx\\
&= \int_{\R^n}g(u_\lambda)(-\Delta)^s u_\lambda dx\\
&=\int_\Omega f(u_\lambda)g(u_\lambda).
\end{split}
\end{equation}

The stability condition (\ref{semistable}) applied with $\eta=\widetilde f(u_\lambda)$ yields
\[\int_\Omega f'(u_\lambda)\widetilde f(u_\lambda)^2 \leq \|\widetilde f(u_\lambda)\|_{\accentset{\circ}{H}^s}^2,\]
which combined with \eqref{ineqnedevf(u)} gives
\begin{equation}\label{4nedev}
\int_\Omega f'(u_\lambda)\widetilde f(u_\lambda)^2\leq \int_\Omega f(u_\lambda)g(u_\lambda).\end{equation}
This inequality can be written as
\[\int_\Omega\left\{f'(u_\lambda)\widetilde f(u_\lambda)^2-\widetilde f(u_\lambda)g(u_\lambda)\right\}\leq f(0)\int_\Omega g(u_\lambda).\]
In addition, since $f$ is convex we have
\[g(t)=\int_0^t f'(s)^2ds\leq f'(t)\int_0^tf'(s)ds\leq f'(t)f(t),\]
and thus,
\[\int_\Omega\left\{f'(u_\lambda)\widetilde f(u_\lambda)^2-\widetilde f(u_\lambda)g(u_\lambda)\right\}\leq f(0)\int_\Omega f'(u_\lambda)f(u_\lambda).\]
Hence, by Lemma \ref{lemah} we obtain
\begin{equation}\label{h1}
\int_\Omega f(u_\lambda)f'(u_\lambda)\leq C.
\end{equation}

Now, on the one hand we have that
\[f(a)-f(b)\leq f'(a)(a-b),\]
since $f$ is increasing and convex.
This yields, as in \eqref{defv},
\[(-\Delta)^s\widetilde f(u_\lambda)\leq f'(u_\lambda)(-\Delta)^su_\lambda=f'(u_\lambda)f(u_\lambda):=v(x).\]

On the other hand, let $w$ the solution of the problem
\begin{equation}\label{hv}
\left\{ \begin{array}{rcll} (-\Delta)^s w &=&v&\textrm{in }\Omega \\
w&=&0&\textrm{on }\partial\Omega.
\end{array}\right.\end{equation}
By \eqref{h1} and Proposition \ref{th2} (i) (see also Remark \ref{n=1}),
\[\|w\|_{L^p(\Omega)}\leq \|v\|_{L^1(\Omega)}\leq C\textrm{ for each }p<\frac{n}{n-2s}.\]
Since $\widetilde f(u_\lambda)$ is a subsolution of \eqref{hv}, then $0\leq \widetilde f(u_\lambda)\leq w$.
Therefore,
\[\|f(u^*)\|_{L^p(\Omega)}\leq C\ \ \textrm{for each}\ \ p<\frac{n}{n-2s},\]
and using Proposition \ref{th2} (iii), we find
\[u^*\in L^\infty(\Omega)\ \ \textrm{whenever}\ \ n<4s,\]
as desired.
\end{proof}

\section{Boundary estimates: the moving planes method}
\label{sec-bdry}

In this section we prove Proposition \ref{bdyestimates}.
This will be done with the celebrated moving planes method \cite{GNN}, as in the classical boundary estimates for the Laplacian of de Figueiredo-Lions-Nussbaum \cite{FLN}.

The moving planes method has been applied to problems involving the fractional Laplacian by different authors; see for example \cite{CLO,BMW,FW}.
However, some of these results use the specific properties of the fractional Laplacian ---such as the extension problem of Caffarelli-Silvestre \cite{CS-ext}, or the Riesz potential expression for $(-\Delta)^{-s}$---, and it is not clear how to apply the method to more general integro-differential operators.
Here, we follow a different approach that allows more general nonlocal operators.

The main tool in the proof is the following maximum principle in small domains.

Recently, Jarohs and Weth \cite{JW} obtained a parabolic version of the maximum principle in small domains for the fractional Laplacian; see Proposition 2.4 in \cite{JW}.
The proof of their result is essentially the same that we present in this section.
Still, we think that it may be of interest to write here the proof for integro-differential operators with decreasing kernels.

\begin{lem}\label{mpsd}
Let $\Omega\subset\R^n$ be a domain satisfying $\Omega\subset \R^n_+=\{x_1>0\}$.
Let $K$ be a nonnegative function in $\R^n$, radially symmetric and decreasing, and satisfying
\[K(z)\geq c|z|^{-n-\nu}\quad\textrm{for all}\quad z\in B_1\]
for some positive constants $c$ and $\nu$, and let
\[L_K u(x)=\int_{\R^n}\bigl(u(y)-u(x)\bigr)K(x-y)dy.\]

Let $V\in L^\infty(\Omega)$ be any bounded function, and $w\in H^s(\R^n)$ be a bounded function satisfying
\begin{equation}\label{pbmpsd}
\left\{ \begin{array}{rcll} L_K w &=&V(x)w&\textrm{in }\Omega \\
w&\geq&0&\textrm{in }\R^n_+\setminus\Omega\\
w(x)&\geq&-w(x^*)&\textrm{in }\R^n_+,\end{array}\right.\end{equation}
where $x^*$ is the symmetric to $x$ with respect to the hyperplane $\{x_1=0\}$.
Then, there exists a positive constant $C_0$ such that if
\begin{equation}\label{cond-mpsd}
\left(1+\|V^-\|_{L^\infty(\Omega)}\right)|\Omega|^{\frac{\nu}{n}}\leq C_0,
\end{equation}
then $w\geq0$ in $\Omega$.
\end{lem}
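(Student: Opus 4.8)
The plan is to argue by contradiction, adapting the energy argument of Jarohs--Weth \cite{JW} to the operator $L_K$. Suppose $w(x_0)<0$ for some $x_0\in\Omega$, and set
\[
A:=\{x\in\R^n_+\,:\,w(x)<0\}.
\]
By the second line of \eqref{pbmpsd}, $w\geq0$ in $\R^n_+\setminus\Omega$, hence $A\subset\Omega$ and $|A|\leq|\Omega|$. Let $v:=\max(-w,0)$ restricted to $\R^n_+$ (so $v\geq0$ and $v\equiv0$ outside $\overline A$), and let $\tilde v$ be its odd reflection across $\{x_1=0\}$: $\tilde v(x)=v(x)$ for $x_1>0$ and $\tilde v(x)=-v(x^*)$ for $x_1<0$. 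Then $\tilde v$ is antisymmetric, $\mathrm{supp}\,\tilde v\subset \overline A\cup\overline{A^*}$ has measure at most $2|\Omega|$, and $\tilde v\in H^s(\R^n)$ since $w\in H^s(\R^n)$ is bounded.

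The core of the proof is a coercivity estimate for the Gagliardo energy
\[
\mathcal Q_K(\tilde v):=\frac12\int_{\R^n}\int_{\R^n}\bigl(\tilde v(x)-\tilde v(y)\bigr)^2K(x-y)\,dx\,dy .
\]
I would obtain it by pairing the equation $L_Kw=V(x)w$ in $\Omega$ with $\tilde v$, using the weak formulation $\int_{\R^n}(L_Kw)\varphi\,dx=-\tfrac12\int_{\R^n}\int_{\R^n}\bigl(w(x)-w(y)\bigr)\bigl(\varphi(x)-\varphi(y)\bigr)K(x-y)\,dx\,dy$, decomposing the domain of integration into $\R^n_+$ and $\R^n_-$, performing the reflection $y\mapsto y^*$, and regrouping (keeping the principal-value structure near the diagonal). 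Three ingredients then enter: \emph{(a)} the monotonicity of $K$, which gives $K(x-y)\geq K(x-y^*)$ for $x,y\in\R^n_+$, so the ``reflected kernel'' $\bar K(x,y):=K(x-y)-K(x-y^*)$ is nonnegative and the terms it multiplies carry a good sign; \emph{(b)} the sign condition $w(x)+w(x^*)\geq0$ from the third line of \eqref{pbmpsd}, used to replace $w(y^*)$ by $-w(y)$ up to a nonnegative error (this is also what lets the contribution of $\mathrm{supp}\,\tilde v\setminus\Omega$ be reflected back and absorbed); \emph{(c)} the identities $v=-w$, $\tilde v=v$ on $A$ together with $w\geq0$ on $\R^n_+\setminus\Omega$, which make the remaining ``bad'' contributions either vanish or be controlled by $\int_\Omega Vwv\,dx=-\int_A V v^2\,dx\leq\|V^-\|_{L^\infty(\Omega)}\|v\|_{L^2(A)}^2$. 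Carrying this out should yield an estimate of the form
\[
\mathcal Q_K(\tilde v)\leq C_1\bigl(1+\|V^-\|_{L^\infty(\Omega)}\bigr)\,\|\tilde v\|_{L^2(\R^n)}^2 ,
\]
with $C_1$ depending only on $n$, $\nu$ and the constant $c$ in the lower bound for $K$.

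Next I would use the lower bound $K(z)\geq c|z|^{-n-\nu}$ on $B_1$ (which self-improves to any smaller exponent, so we may assume $\nu<n$) to invoke the fractional Sobolev inequality $\mathcal Q_K(\tilde v)\geq c_2\,\|\tilde v\|_{L^{2^*}(\R^n)}^2$ with $2^*=\frac{2n}{n-\nu}$, valid since $\tilde v$ has compact support. Because $|\mathrm{supp}\,\tilde v|\leq 2|\Omega|$, H\"older's inequality gives $\|\tilde v\|_{L^2(\R^n)}^2\leq\bigl(2|\Omega|\bigr)^{\nu/n}\|\tilde v\|_{L^{2^*}(\R^n)}^2\leq C_3\,|\Omega|^{\nu/n}\,\mathcal Q_K(\tilde v)$. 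Combining this with the coercivity estimate,
\[
\mathcal Q_K(\tilde v)\leq C_4\bigl(1+\|V^-\|_{L^\infty(\Omega)}\bigr)\,|\Omega|^{\nu/n}\,\mathcal Q_K(\tilde v) .
\]
Taking $C_0:=1/(2C_4)$, hypothesis \eqref{cond-mpsd} forces $\mathcal Q_K(\tilde v)\leq\tfrac12\mathcal Q_K(\tilde v)$, hence $\mathcal Q_K(\tilde v)=0$; by the Sobolev inequality this means $\tilde v\equiv0$, i.e. $v\equiv0$, so $w\geq0$ in $\R^n_+$ and in particular in $\Omega$, contradicting $w(x_0)<0$.

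The step I expect to be the main obstacle is the coercivity estimate, i.e. the passage to $\mathcal Q_K(\tilde v)\leq C_1(1+\|V^-\|_{L^\infty(\Omega)})\|\tilde v\|_{L^2}^2$: one must carefully organize the double integral coming from the weak formulation, split it across the hyperplane, symmetrize, and keep precise track of signs so that the monotonicity of $K$ and the antisymmetry-type condition $w(x)+w(x^*)\geq0$ can be brought to bear (and so that the part of the energy landing outside $\Omega$ is handled). The principal-value nature of $L_K$---the kernel need not be integrable near the origin---must be respected throughout, so it is cleanest to work with the bilinear form $\mathcal Q_K$ and the differences $w(x)-w(y)$ rather than with $L_Kw$ pointwise; equivalently, one may phrase the whole argument in terms of the ``reflected'' form $\int_{\R^n_+}\int_{\R^n_+}\bigl(v(x)-v(y)\bigr)^2\bar K(x,y)\,dx\,dy$ on the half-space. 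Everything else---the Sobolev and H\"older inequalities and the final absorption---is routine.
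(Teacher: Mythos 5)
Your strategy is the same as the paper's: test the equation with the negative part of $w$ (supported in $\Omega$, since $w\geq0$ in $\R^n_+\setminus\Omega$), use exactly the three sign ingredients you list --- monotonicity of $K$ under the reflection $y\mapsto y^*$, the antisymmetry condition $w(x)\geq -w(x^*)$, and $\int_\Omega Vw\varphi\leq\|V^-\|_{L^\infty}\|w^-\|_{L^2}^2$ --- to get a coercivity estimate, and then conclude by the fractional Sobolev inequality, H\"older on the small-measure support, and absorption. The endgame matches the paper's, including the origin of the ``$1+$'' in \eqref{cond-mpsd} (the kernel lower bound holds only in $B_1$, so passing between the $K$-form and the $H^{\nu/2}$ seminorm costs an extra $\|w^-\|_{L^2}^2$). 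The step you flag as the main obstacle does go through with precisely the ingredients you name: it is the paper's inequality \eqref{claim}, obtained by choosing $\varphi=-w^-\chi_\Omega$, splitting the form over $\Omega\times\Omega$, $\Omega\times(\R^n_+\setminus\Omega)$, and the reflected copy of $\Omega\times\R^n_-$, using $w^+w^-\equiv0$ and $w^-\equiv0$ on $\R^n_+\setminus\Omega$, and discarding the terms of good sign.

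One caution about your packaging. Working with the full-space energy $\mathcal Q_K(\tilde v)$ of the odd reflection is more delicate than necessary: splitting it across $\{x_1=0\}$ produces the cross term $\int_{\R^n_+}\int_{\R^n_+}\bigl(v(x)+v(y)\bigr)^2K(x-y^*)\,dx\,dy$, whose singular part is controlled only by a Hardy-type quantity $\int v(x)^2\bigl(\int_{\R^n_-}K(x-z)\,dz\bigr)dx$; this need not be finite without further argument when $\overline A$ meets the hyperplane (as it does in the moving-planes application), so the asserted upper bound on $\mathcal Q_K(\tilde v)$ is not automatic. The alternative $\bar K$-form you mention avoids that, but then $\bar K$ degenerates at the hyperplane and the Sobolev lower bound is no longer immediate. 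The paper sidesteps both problems by never reflecting $v$: it bounds the tested form from below by the Gagliardo $K$-energy of $w^-\chi_\Omega$ itself, to which the kernel lower bound and the standard fractional Sobolev inequality apply directly. Your contradiction framing is also unnecessary (the argument shows directly that $w^-\equiv0$ in $\Omega$), but that is cosmetic.
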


\begin{rem} When $L_K$ is the fractional Laplacian $(-\Delta)^s$, then the condition \eqref{cond-mpsd} can be replaced by $\|V^-\|_{L^\infty}|\Omega|^{\frac{2s}{n}}\leq C_0$.
\end{rem}

\begin{proof}[Proof of Lemma \ref{mpsd}]
The identity $L_K w =V(x)w$ in $\Omega$ written in weak form is
\begin{equation}\label{mpsd1}
(\varphi,w)_K:=\int\int_{\R^{2n}\setminus(\R^n\setminus\Omega)^2}{(\varphi(x)-\varphi(y))(w(x)-w(y))}K(x-y)dx\,dy=\int_\Omega Vw\varphi
\end{equation}
for all $\varphi$ such that $\varphi\equiv0$ in $\R^n\setminus\Omega$ and $\int_{\R^n}\bigl(\varphi(x)-\varphi(y)\bigr)^2K(x-y)dx\,dy<\infty$.
Note that the left hand side of \eqref{mpsd1} can be written as
\[\begin{split}
(\varphi,w)_K=&\int_\Omega\int_\Omega {(\varphi(x)-\varphi(y))(w(x)-w(y))}K(x-y)dx\,dy\\
&+2\int_{\Omega}\int_{\R^n_+\setminus\Omega} {\varphi(x)(w(x)-w(y))}K(x-y)dx\,dy\\
&+2\int_{\Omega}\int_{\R^n_+} {\varphi(x)(w(x)-w(y^*))}K(x-y^*)dx\,dy,\end{split}\]
where $y^*$ denotes the symmetric of $y$ with respect to the hyperplane $\{x_1=0\}$.

Choose $\varphi=-w^-\chi_\Omega$, where $w^-$ is the negative part of $w$, i.e., $w=w^+-w^-$.
Then, we claim that
\begin{equation}\label{claim}
\int\int_{\R^{2n}\setminus(\R^n\setminus\Omega)^2} {(w^-(x)\chi_\Omega(x)-w^-(y)\chi_\Omega(y))^2}K(x-y)dx\,dy\leq (-w^-\chi_\Omega,w)_K.
\end{equation}
Indeed, first, we have
\[\begin{split}
(-w^-\chi_\Omega,w)_K&= \int_\Omega\int_\Omega \{(w^-(x)\hspace{-1mm}-\hspace{-1mm}w^-(y))^2\hspace{-1mm}+\hspace{-1mm}w^-(x)w^+(y)\hspace{-1mm}+\hspace{-1mm}w^+(x)w^-(y)\}K(x\hspace{-1mm}-\hspace{-1mm}y)dxdy+\\
&+2\int_{\Omega}\int_{\R^n_+\setminus\Omega} \{w^-(x)(w^-(x)-w^-(y))+w^-(x)w^+(y)\}K(x-y)dx\,dy\\
&+2\int_{\Omega}\int_{\R^n_+} \{w^-(x)(w^-(x)-w^-(y^*))+w^-(x)w^+(y^*)\}K(x-y^*)dx\,dy,
\end{split}\]
where we have used that $w^+(x)w^-(x)=0$ for all $x\in \R^n$.

Thus, rearranging terms and using that $w^-\equiv0$ in $\R^n_+\setminus\Omega$,
\[\begin{split}
(-w^-\chi_\Omega,w)_K=& \int\int_{\R^{2n}\setminus(\R^n\setminus\Omega)^2} {(w^-(x)\chi_\Omega(x)-w^-(y)\chi_\Omega(y))^2}K(x-y)dx\,dy\\
&+\int_\Omega\int_\Omega {2w^-(x)w^+(y)}K(x-y)dx\,dy+\\
&+2\int_{\Omega}\int_{\R^n_+\setminus\Omega} \{w^-(x)w^+(y)-w^-(x)w^-(y)\}K(x-y)dx\,dy\\
&+2\int_{\Omega}\int_{\R^n_+} \{w^-(x)w^+(y^*)-w^-(x)w^-(y^*)\}K(x-y^*)dx\,dy\\
\geq&\int\int_{\R^{2n}\setminus(\R^n\setminus\Omega)^2} {(w^-(x)\chi_\Omega(x)-w^-(y)\chi_\Omega(y))^2}K(x-y)dx\,dy+\\
&+2\int_\Omega\int_{\R^n_+} {w^-(x)w^+(y)}K(x-y)dx\,dy+\\
&+2\int_{\Omega}\int_{\R^n_+} {-w^-(x)w^-(y^*)}K(x-y^*)dx\,dy.
\end{split}\]
We next use that, since $K$ is radially symmetric and decreasing, $K(x-y^*)\leq K(x-y)$ for all $x$ and $y$ in $\R^n_+$.
We deduce
\[\begin{split}(-w^-\chi_\Omega,w)_K\geq&
\int\int_{\R^{2n}\setminus(\R^n\setminus\Omega)^2} {(w^-(x)\chi_\Omega(x)-w^-(y)\chi_\Omega(y))^2}K(x-y)dx\,dy+\\
&+2\int_{\Omega}\int_{\R^n_+} {w^-(x)w^+(y)-w^-(x)w^-(y^*)}K(x-y)dx\,dy,
\end{split}\]
and since $w^-(y^*)\leq w^+(y)$ for all $y$ in $\R^n_+$ by assumption, we obtain \eqref{claim}.

Now, on the one hand note that from \eqref{claim} we find
\[\int_{\Omega}\int_{\Omega} {(w^-(x)-w^-(y))^2}K(x-y)dx\,dy\leq (-w^-\chi_\Omega,w)_K.\]
Moreover, since $K(z)\geq c|z|^{-n-\nu}\chi_{B_1}(z)$, then
\[\begin{split}
\|w^-\|^2_{\accentset{\circ}{H}^{\nu/2}(\Omega)}&:=\frac{c_{n,s}}{2}\int_{\Omega}\int_{\Omega} \frac{(w^-(x)-w^-(y))^2}{|x-y|^{-n-\nu}}dx\,dy\\
&\leq C\|w^-\|_{L^2(\Omega)}+C\int_{\Omega}\int_{\Omega} {\bigl(w^-(x)-w^-(y)\bigr)^2}K(x-y)dx\,dy,
\end{split}\]
and therefore
\begin{equation}\label{mpsd3}
\|w^-\|^2_{\accentset{\circ}{H}^{\nu/2}(\Omega)}\leq C_1\|w^-\|_{L^2(\Omega)}+C_1(-w^-\chi_\Omega,w)_K.
\end{equation}

On the other hand, it is clear that
\begin{equation}\label{mpsd2}
\int_\Omega Vww^-=\int_\Omega V(w^-)^2\leq \|V^-\|_{L^\infty(\Omega)}\|w^-\|_{L^2(\Omega)}.\end{equation}

Thus, it follows from \eqref{mpsd1}, \eqref{mpsd3}, and \eqref{mpsd2} that
\[\|w^-\|_{\accentset{\circ}{H}^{\nu/2}(\Omega)}^2\leq  C_1\left(1+\|V^-\|_{L^\infty}\right)\|w^-\|_{L^2(\Omega)}.\]
Finally, by the H\"older and the fractional Sobolev inequalities, we have
\[\|w^-\|_{L^2(\Omega)}^2\leq |\Omega|^{\frac{\nu}{n}}\|w^-\|_{L^q(\Omega)}^2\leq C_2|\Omega|^{\frac{\nu}{n}}\|w^-\|_{\accentset{\circ}{H}^{\nu/2}(\Omega)}^2,\]
where $q=\frac{2n}{n-\nu}$.
Thus, taking $C_0$ such that $C_0<(C_1C_2)^{-1}$ the lemma follows.
\end{proof}

Now, once we have the nonlocal version of the maximum principle in small domains, the moving planes method can be applied exactly as in the classical case.

\begin{proof}[Proof of Proposition \ref{bdyestimates}]
Replacing the classical maximum principle in small domains by Lemma \ref{mpsd}, we can apply the moving planes method to deduce
$\|u\|_{L^\infty(\Omega_\delta)}\leq C\|u\|_{L^1(\Omega)}$ for some constants $C$ and $\delta>0$ that depend only on $\Omega$, as in de Figueiredo-Lions-Nussbaum \cite{FLN}; see also \cite{B}.

Let us recall this argument.
Assume first that all curvatures of $\partial\Omega$ are positive.
Let $\nu(y)$ be the unit outward normal to $\Omega$ at $y$.
Then, there exist positive constants $s_0$ and $\alpha$
depending only on the convex domain $\Omega$ such that, for every $y\in\partial\Omega$
and every $e\in \R^n$ with $|e| = 1$ and $e\cdot \nu(y)\geq\alpha$, $u(y-se)$ is nondecreasing in
$s\in[0, s_0]$.
This fact follows from the moving planes method applied to planes close
to those tangent to $\Omega$ at $\partial\Omega$.
By the convexity of $\Omega$, the reflected caps will be
contained in $\Omega$.
The previous monotonicity fact leads to the existence of a set $I_x$, for each $x\in\Omega_\delta$, and a constant $\gamma>0$ that depend only on $\Omega$, such that
\[|I_x|\geq \gamma,\qquad u(x)\leq u(y)\quad \textrm{for all}\quad y\in I_x.\]
The set $I_x$ is a truncated open cone with vertex at $x$.

As mentioned in page 45 of de Figuereido-Lions-Nussbaum \cite{FLN},
the same can also be proved for general convex domains with a little more of care.
\end{proof}

\begin{rem}
When $\Omega=B_1$, Proposition \ref{bdyestimates} follows from the results in \cite{BMW}, where Birkner, L\'opez-Mimbela, and Wakolbinger used the moving planes method to show that any nonnegative bounded solution of
\begin{equation}\label{semilinear}
\left\{\begin{array}{rcll}
(-\Delta)^s u &=&f(u)&\quad \textrm{in}\ B_1\\
u&=&0&\quad \textrm{in}\ \R^n\setminus B_1
\end{array}\right.
\end{equation}
is radially symmetric and decreasing.

When $u$ is a bounded semistable solution of \eqref{semilinear}, there is an alternative way to show that $u$ is radially symmetric.
This alternative proof applies to all solutions (not necessarily positive), but does not give monotonicity.
Indeed, one can easily show that, for any $i\neq j$, the function $w=x_iu_{x_j}-x_j u_{x_i}$ is a solution of the linearized problem
\begin{equation}
\left\{\begin{array}{rcll}
(-\Delta)^s w &=&f'(u)w&\quad \textrm{in}\ B_1\\
w&=&0&\quad \textrm{in}\ \R^n\setminus B_1.
\end{array}\right.
\end{equation}
Then, since $\lambda_1\left((-\Delta)^s-f'(u);B_1\right)\geq0$ by assumption, it follows that either $w\equiv0$ or $\lambda_1=0$ and $w$ is a multiple of the first eigenfunction, which is positive ---see the proof of Proposition 9 in \cite[Appendix A]{SV2}.
But since $w$ is a tangential derivative then it can not have constant sign along a circumference $\{|x|=r\}$, $r\in(0,1)$, and thus it has to be $w\equiv0$.
Therefore, all the tangential derivatives $\partial_t u=x_iu_{x_j}-x_j u_{x_i}$ equal zero, and thus $u$ is radially symmetric.
\end{rem}

\section{$H^s$ regularity of the extremal solution in convex domains}
\label{sec-Hs}

In this section we prove Theorem \ref{S} (iii).
A key tool in this proof is the Pohozaev identity for the fractional Laplacian, recently obtained by the authors in \cite{RS}.
This identity allows us to compare the interior $H^s$ norm of the extremal solution $u^*$ with a boundary term involving $u^*/\delta^s$, where $\delta$ is the distance to $\partial\Omega$.
Then, this boundary term can be bounded by using the results of the previous section by the $L^1$ norm of $u^*$, which is finite.

We first prove the boundedness of $u^*/\delta^s$ near the boundary.

\begin{lem}\label{bdyestimates-quotient}
Let $\Omega$ be a convex domain, $u$ be a bounded solution of \eqref{Omega},
and $\delta(x)=\textrm{dist}(x,\partial\Omega)$.
Assume that
\[\|u\|_{L^1(\Omega)}\leq c_1\]
for some $c_1>0$.
Then, there exists constants $\delta>0$, $c_2$, and $C$ such that
\[\|u/\delta^s\|_{L^{\infty}(\Omega_\delta)}\leq C\left(c_2+\|f\|_{L^\infty([0,c_2])}\right),\]
where $\Omega_\delta=\{x\in\Omega\,:\, {\rm dist}(x,\partial\Omega)<\delta\}$.
Moreover, the constants $\delta$, $c_2$, and $C$ depend only on $\Omega$ and $c_1$.
\end{lem}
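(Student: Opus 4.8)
The plan is to combine the $L^\infty$ bound near the boundary provided by Proposition \ref{bdyestimates} with the optimal boundary regularity $u\in C^s(\overline\Omega)$ of solutions of the Dirichlet problem for $(-\Delta)^s$, together with an explicit supersolution (barrier) of the form $c\,\delta^s$ near $\partial\Omega$. First I would invoke Proposition \ref{bdyestimates}: since $\Omega$ is convex, $f$ is locally Lipschitz, and $u$ is a bounded positive solution of \eqref{Omega}, there are constants $\delta_0>0$ and $C_0$ depending only on $\Omega$ such that $\|u\|_{L^\infty(\Omega_{\delta_0})}\le C_0\|u\|_{L^1(\Omega)}\le C_0 c_1=:c_2'$. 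By the interior estimates for $(-\Delta)^s$ (or a simple Harnack-type/barrier argument using that $u\ge0$), one also controls $\|u\|_{L^\infty(\Omega)}$ in terms of $c_2'$ and $\|f\|_{L^\infty([0,c_2'])}$ — for instance, comparing $u$ with a large multiple of the torsion function $\overline u$ of \eqref{f=1}, which is bounded. Call the resulting bound $c_2$; it depends only on $\Omega$ and $c_1$ (and, through $f$, on $\|f\|_{L^\infty([0,c_2])}$, which is the term appearing in the statement).

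Next I would produce the barrier. The key point is that $(-\Delta)^s(\delta^s)$ is bounded from below by a negative constant in a neighborhood $\Omega_\delta$ of $\partial\Omega$ when $\partial\Omega$ is $C^{1,1}$ — this is a standard computation (see \cite{RS-Dir}), using that $\delta^s$ behaves like the one-dimensional $s$-harmonic function $(x_1)_+^s$ near the boundary, which is killed by $(-\Delta)^s$ in the half-space, plus a curvature correction of order $\delta^{1-s}$. More precisely, there exist $\delta\in(0,\delta_0)$ and a constant $C_1$, depending only on $\Omega$, and a function $\psi$ with $c\,\delta^s\le\psi\le C\,\delta^s$ in $\Omega_\delta$, $\psi$ bounded in $\Omega$, such that $(-\Delta)^s\psi\ge 1$ in $\Omega_\delta$. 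Then set $w=M\psi$ with $M:=c_2+\|f\|_{L^\infty([0,c_2])}$ times a large constant. In $\Omega_\delta$ we have $(-\Delta)^s(w-u)=M(-\Delta)^s\psi - f(u)\ge M - \|f\|_{L^\infty([0,c_2])}\ge 0$ for $M$ large; in $\Omega\setminus\Omega_\delta$ we have $w\ge c\,M\delta^s\ge c\,M\delta_\ast^s\ge c_2\ge u$ once $M$ is large (here $\delta_\ast=\mathrm{dist}(\partial\Omega_\delta,\partial\Omega)$ is a fixed positive number); and in $\R^n\setminus\Omega$, $w-u=0$. By the maximum principle for $(-\Delta)^s$ applied on $\Omega_\delta$, $u\le w=M\psi\le CM\delta^s$ in $\Omega_\delta$, i.e. $u/\delta^s\le C(c_2+\|f\|_{L^\infty([0,c_2])})$ in $\Omega_\delta$. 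Since $u>0$, this is the desired bound.

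The main obstacle — and the step requiring the most care — is the construction and verification of the barrier: namely the lower bound $(-\Delta)^s(\delta^s)\ge -C$ in $\Omega_\delta$, and more importantly getting a genuine supersolution $(-\Delta)^s\psi\ge 1$ rather than merely a bounded quantity. One must either quote the precise computation from \cite{RS-Dir} (where such barriers $\delta^s$ and corrections are built for $C^{1,1}$ domains) or redo it: flatten the boundary locally, use that $(-\Delta)^s(x_1)_+^s=0$ in $\{x_1>0\}$, estimate the error terms coming from the $C^{1,1}$ change of variables (these are integrable and of lower order), and add a small multiple of a function like $-\delta^{s+\epsilon}$ or $\mathrm{dist}^2$ to absorb the sign. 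A secondary technical point is ensuring all constants ($\delta$, $c_2$, $C$) depend only on $\Omega$ and $c_1$ and not on $u$ or $f$ beyond the stated $\|f\|_{L^\infty([0,c_2])}$; this is automatic once one tracks that $c_2$ itself is a function of $\Omega$ and $c_1$ only, via Proposition \ref{bdyestimates} and the global bound.
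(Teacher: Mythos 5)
Your overall strategy (Proposition \ref{bdyestimates} for the near-boundary $L^\infty$ bound, then a barrier $\psi\asymp\delta^s$ with $(-\Delta)^s\psi\ge1$) is natural, but the comparison step has a genuine gap. To conclude $u\le M\psi$ in $\Omega_\delta$ from the maximum principle you must verify $M\psi\ge u$ on all of $\R^n\setminus\Omega_\delta$, and in particular on the inner region $\Omega\setminus\Omega_\delta$; for this you invoke a \emph{global} bound $\|u\|_{L^\infty(\Omega)}\le c_2$ with $c_2$ depending only on $\Omega$ and $c_1$. Proposition \ref{bdyestimates} only controls $u$ in a boundary collar, and your proposed justification of the global bound --- comparison with $M\overline u$, where $\overline u$ solves \eqref{f=1} --- is circular: to have $(-\Delta)^s(M\overline u)=M\ge f(u)$ in $\Omega$ you must already know $\sup_\Omega u$ in order to choose $M$. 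No estimate of the form $\|u\|_{L^\infty(\Omega)}\le C(\Omega,c_1)$ is available from the hypotheses (the content of Proposition \ref{bdyestimates} is precisely that only the boundary layer is controlled by $\|u\|_{L^1}$; interior concentration is not excluded), and your own parenthetical remark that $c_2$ depends "through $f$" on $\|f\|_{L^\infty([0,c_2])}$ contradicts the requirement that $c_2$ depend only on $\Omega$ and $c_1$.

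The paper's proof avoids exactly this: it localizes $u$ near the boundary by a cutoff $\eta$ supported in $\Omega_{2\delta/3}$ with $\eta\equiv1$ in $\Omega_{\delta/3}$, and shows that $(-\Delta)^s(u\eta)=f(u)\chi_{\Omega_{\delta/4}}+g$ with $\|g\|_{L^\infty(\Omega)}$ controlled by interior estimates for $u$ on an annulus contained in the collar (where $u$ is bounded by Proposition \ref{bdyestimates}) together with $\|u\|_{L^1(\Omega)}$; the interior values of $u$ enter only through the $L^1$ norm, via the tail of the nonlocal operator. The conclusion then follows from the boundary regularity theorem of \cite{RS-Dir}, which gives $u\eta/\delta^s\in C^\alpha(\overline\Omega)$ with a bound by the $L^\infty$ norm of the right-hand side. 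If you wish to keep a pure barrier argument, you must first perform a similar truncation or localization of $u$ so that the uncontrolled interior values do not appear in the exterior condition of the comparison principle; as written, the proof does not close.
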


\begin{proof}
The result can be deduced from the boundary regularity results in \cite{RS-Dir} and Proposition \ref{bdyestimates}, as follows.

Let $\delta>0$ be given by Proposition \ref{bdyestimates}, and let $\eta$ be a smooth cutoff function satisfying $\eta\equiv0$ in $\Omega\setminus\Omega_{2\delta/3}$ and $\eta\equiv1$ in $\Omega_{\delta/3}$.
Then, $u\eta\in L^\infty(\Omega)$ and $u\eta\equiv0$ in $\R^n\setminus\Omega$.
Moreover, we claim that
\begin{equation}\label{eqt}
(-\Delta)^s(u\eta)=f(u)\chi_{\Omega_{\delta/4}}+g\qquad \textrm{in}\ \Omega
\end{equation}
for some function $g\in  L^\infty(\Omega)$, with the estimate
\begin{equation}\label{eqt2}
\|g\|_{L^\infty(\Omega)}\leq C\left(\|u\|_{C^{1+s}(\Omega_{4\delta/5}\setminus\Omega_{\delta/5})}+\|u\|_{L^1(\Omega)}\right).
\end{equation}

To prove that \eqref{eqt} holds pointwise we argue separately in $\Omega_{\delta/4}$, in $\Omega_{3\delta/4}\setminus\Omega_{\delta/4}$, and in $\Omega\setminus\Omega_{3\delta/4}$, as follows:
\begin{itemize}
\item In $\Omega_{\delta/4}$, $g=(-\Delta)^s (u\eta)-(-\Delta)^s u$.
Since $u\eta-u$ vanishes in $\Omega_{\delta/3}$ and also outside $\Omega$, $g$ is bounded and satisfies \eqref{eqt2}.

\item In $\Omega_{3\delta/4}\setminus\Omega_{\delta/4}$, $g=(-\Delta)^s(u\eta)$.
Then, using
\[\|(-\Delta)^s(u\eta)\|_{L^\infty(\Omega_{3\delta/4}\setminus\Omega_{\delta/4})}\leq C\left(\|u\eta\|_{C^{1+s}(\Omega_{4\delta/5}\setminus\Omega_{\delta/5})}+\|u\eta\|_{L^1(\R^n)}\right)\]
and that $\eta$ is smooth, we find that $g$ is bounded and satisfies \eqref{eqt2}.

\item In $\Omega\setminus\Omega_{3\delta/4}$, $g=(-\Delta)^s(u\eta)$.
Since $u\eta$ vanishes in $\Omega\setminus\Omega_{2\delta/3}$, $g$ is bounded and satisfies \eqref{eqt2}.
\end{itemize}

Now, since $u$ is a solution of \eqref{Omega}, by classical interior estimates we have
\begin{equation}\label{eqt3}
\|u\|_{C^{1+s}(\Omega_{4\delta/5}\setminus\Omega_{\delta/5})}\leq C\left(\|u\|_{L^\infty(\Omega_\delta)}+\|u\|_{L^1(\Omega)}\right);
\end{equation}
see for instance \cite{RS-Dir}.
Hence, by \eqref{eqt} and Theorem 1.2 in \cite{RS-Dir}, $u\eta/\delta^s\in C^{\alpha}(\overline\Omega)$ for some $\alpha>0$ and
\[\|u\eta/\delta^s\|_{C^{\alpha}(\overline\Omega)}\leq C\|f(u)\chi_{\Omega_{\delta/4}}+g\|_{L^\infty(\Omega)}.\]
Thus,
\[\begin{split}
\|u/\delta^s\|_{L^\infty(\Omega_{\delta/3})}&
\leq \|u\eta/\delta^s\|_{C^{\alpha}(\overline\Omega)}\leq C\left(\|g\|_{L^\infty(\Omega)}+\|f(u)\|_{L^{\infty}(\Omega_{\delta/4})}\right)\\
&\leq C\left(\|u\|_{L^1(\Omega)}+\|u\|_{L^\infty(\Omega_\delta)}+\|f(u)\|_{L^\infty(\Omega_{\delta/4})}\right).
\end{split}\]
In the last inequality we have used \eqref{eqt2} and \eqref{eqt3}.
Then, the result follows from Proposition \ref{bdyestimates}.
\end{proof}

We can now give the

\begin{proof}[Proof of Theorem \ref{S} (iii)] Recall that $u_\lambda$ minimizes the energy $\mathcal E$ in the set $\{u\in H^s(\R^n)\,:\,0\leq u\leq u_\lambda\}$ (see Step 4 in the proof of Proposition \ref{existence} in Section \ref{sec-exist}).
Hence,
\begin{equation}\label{p1}
\|u_\lambda\|_{\accentset{\circ}{H}^s}^2-\int_\Omega \lambda F(u_\lambda)=\mathcal E(u_\lambda)\leq \mathcal E(0)=0.\end{equation}
Now, the Pohozaev identity for the fractional Laplacian can be written as
\begin{equation}\label{p2}s\|u_\lambda\|_{\accentset{\circ}{H}^s}^2-n\mathcal E(u_\lambda)=\frac{\Gamma(1+s)^2}{2}\int_{\partial\Omega}\left(\frac{u_\lambda}{\delta^s}\right)^2(x\cdot \nu)d\sigma,\end{equation}
see \cite[page 2]{RS}.
Therefore, it follows from \eqref{p1} and \eqref{p2} that
\[\|u_\lambda\|_{\accentset{\circ}{H}^s}^2\leq \frac{\Gamma(1+s)^2}{2s}\int_{\partial\Omega}\left(\frac{u_\lambda}{\delta^s}\right)^2(x\cdot \nu)d\sigma.\]

Now, by Proposition \ref{bdyestimates-quotient}, we have that
\[\int_{\partial\Omega}\left(\frac{u_\lambda}{\delta^s}\right)^2(x\cdot \nu)d\sigma\leq C\]
for some constant $C$ that depends only on $\Omega$ and $\|u_\lambda\|_{L^1(\Omega)}$.
Thus, $\|u_\lambda\|_{\accentset{\circ}{H}^s}\leq  C$, and since $u^*\in L^1(\Omega)$, letting $\lambda\uparrow \lambda^*$ we find
\[\|u^*\|_{\accentset{\circ}{H}^s}<\infty,\]
as desired.
\end{proof}

\section{$L^p$ and $C^\beta$ estimates for the linear Dirichlet problem}
\label{sec3}

The aim of this section is to prove Propositions \ref{th2} and \ref{prop:u-is-Cbeta}.
We prove first Proposition \ref{th2}.

\begin{proof}[Proof of Proposition \ref{th2}]
(i) It is clear that we can assume $\|g\|_{L^1(\Omega)}=1$.

Consider the solution $v$ of
\[(-\Delta)^s v=|g|\ \ {\rm in}\ \R^n\]
given by the Riesz potential $v=(-\Delta)^{-s}|g|$.
Here, $g$ is extended by 0 outside $\Omega$.

Since $v\geq0$ in $\R^n\setminus\Omega$, by the maximum principle we have that $|u|\leq v$ in $\Omega$.
Then, it follows from Theorem \ref{th1} that
\[\|u\|_{L^q_{\textrm{weak}}(\Omega)}\leq C,\quad\textrm{where}\quad q=\frac{n}{n-2s},\]
and hence we find that
\[\|u\|_{L^r(\Omega)}\leq C\quad\textrm{for all}\quad r<\frac{n}{n-2s}\]
for some constant that depends only on $n$, $s$, and $|\Omega|$.

(ii) The proof is analogous to the one of part (i).
In this case, the constant does not depend on the domain $\Omega$.

(iii) As before, we assume $\|g\|_{L^p(\Omega)}=1$.
Write $u=\tilde v+w$, where $\tilde v$ and $w$ are given by
\begin{equation}\label{v}
\tilde v=(-\Delta)^{-s} g\ \ {\rm in}\ \R^n,
\end{equation}
and
\begin{equation}\label{w}
\left\{ \begin{array}{rcll} (-\Delta)^s w &=&0&\textrm{in }\Omega \\
w&=&\tilde v&\textrm{in }\mathbb R^n\backslash\Omega.\end{array}\right.
\end{equation}
Then, from \eqref{v} and Theorem \ref{th1} we deduce that
\begin{equation}\label{frt}
[\tilde v]_{C^\alpha(\R^n)}\leq C,\quad\textrm{where}\quad \alpha=2s-\frac{n}{p}.
\end{equation}
Moreover, since the domain $\Omega$ is bounded, then $g$ has compact support and hence $\tilde v$ decays at infinity.
Thus, we find
\begin{equation}\label{ineqv}
\|\tilde v\|_{C^\alpha(\R^n)}\leq C
\end{equation}
for some constant $C$ that depends only on $n$, $s$, $p$, and $\Omega$.

Now, we apply Proposition \ref{prop:u-is-Cbeta} to equation \eqref{w}.
We find
\begin{equation}\label{ineqw}
\|w\|_{C^\beta(\R^n)}\leq C\|\tilde v\|_{C^\alpha(\R^n)},\end{equation}
where $\beta=\min\{\alpha,s\}$.
Thus, combining \eqref{ineqv}, and \eqref{ineqw} the result follows.
\end{proof}

Note that we have only used Proposition \ref{prop:u-is-Cbeta} to obtain the $C^\beta$ estimate in part (iii).
If one only needs an $L^\infty$ estimate instead of the $C^\beta$ one, Proposition \ref{prop:u-is-Cbeta} is not needed, since the $L^\infty$ bound follows from the maximum principle.

As said in the introduction, the $L^p$ to $W^{2s,p}$ estimates for the fractional Laplace equation, in which $-\Delta$ is replaced by the fractional Laplacian $(-\Delta)^s$, are not true for all $p$, even when $\Omega=\R^n$.
This is illustrated in the following two remarks.

Recall the definition of the fractional Sobolev space $W^{\sigma,p}(\Omega)$ which, for $\sigma\in(0,1)$, consists of all functions $u\in L^p(\Omega)$ such that
\[\|u\|_{W^{\sigma,p}(\Omega)}=\|u\|_{L^p(\Omega)}+\left(\int_{\Omega}\int_{\Omega}\frac{|u(x)-u(y)|^p}{|x-y|^{n+p\sigma}}\,dx\,dy\right)^{\frac1p}\]
is finite; see for example \cite{DPV} for more information on these spaces.

\begin{rem}\label{rem1} Let $s\in(0,1)$.
Assume that $u$ and $g$ belong to $L^p(\R^n)$, with $1<p<\infty$, and that
\[(-\Delta)^su=g\ \ \textrm{in}\ \R^n.\]
\begin{itemize}
\item[(i)] If $p\geq2$, then $u\in W^{2s,p}(\R^n)$.
\item[(ii)] If $p<2$ and $2s\neq1$ then $u$ may not belong to $W^{2s,p}(\R^n)$.
Instead, $u\in B^{2s}_{p,2}(\R^n)$, where $B^\sigma_{p,q}$ is the Besov space of order $\sigma$ and parameters $p$ and $q$.
\end{itemize}
For more details see the books of Stein \cite{Stein} and Triebel \cite{T2}.
\end{rem}

By the preceding remark we see that the $L^p$ to $W^{2s,p}$ estimate does not hold in $\R^n$ whenever $p<2$ and $s\neq \frac12$.
The following remark shows that in bounded domains $\Omega$ this estimate do not hold even for $p\geq2$.

\begin{rem}\label{rem11}
Let us consider the solution of $(-\Delta)^su=g$ in $\Omega$, $u\equiv0$ in $\R^n\setminus\Omega$.
When $\Omega=B_1$ and $g\equiv1$, the solution to this problem is
\[u_0(x)=\left(1-|x|^2\right)^s\chi_{B_1}(x);\]
see \cite{G}.
For $p$ large enough one can see that $u_0$ does not belong to $W^{2s,p}(B_1)$, while $g\equiv1$ belongs to $L^p(B_1)$ for all $p$.
For example, when $s=\frac12$ by computing $|\nabla u_0|$ we see that $u_0$ does not belong to $W^{1,p}(B_1)$ for $p\geq2$.
\end{rem}

We next prove Proposition \ref{prop:u-is-Cbeta}.
For it, we will proceed similarly to the $C^s$ estimates obtained in \cite[Section~2]{RS-Dir} for the Dirichlet problem for the fractional Laplacian with $L^\infty$ data.

The first step is the following:

\begin{lem}\label{bound-u} Let $\Omega$ be a bounded domain satisfying the exterior ball condition, $s\in(0,1)$,
$h$ be a $C^\alpha(\R^n\setminus\Omega)$ function for some $\alpha>0$, and $u$ be the solution of \eqref{g}.
Then
\[|u(x)-u(x_0)|\leq C\|h\|_{C^\alpha(\R^n\setminus\Omega)}\delta(x)^\beta\ \ {\rm in}\ \Omega,\]
where $x_0$ is the nearest point to $x$ on $\partial\Omega$, $\beta=\min\{s,\alpha\}$, and $\delta(x)={\rm dist}(x,\partial\Omega)$.
The constant $C$ depends only on $n$, $s$, and $\alpha$.
\end{lem}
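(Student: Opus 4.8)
The plan is to build an explicit supersolution (barrier) that controls the oscillation of $u$ near each boundary point, and then to compare. First I would reduce to the case $\|h\|_{C^\alpha(\R^n\setminus\Omega)}\le 1$ by linearity, so that in particular $|h(x)-h(x_0)|\le |x-x_0|^\alpha$ for the fixed boundary point $x_0=x_0(x)$. Fixing such an $x_0\in\partial\Omega$, by the exterior ball condition there is a ball $B_R(y_0)\subset \R^n\setminus\Omega$ with $x_0\in\partial B_R(y_0)$; after a translation we may place $x_0$ conveniently. The key ingredient is an explicit function $\varphi$, harmonic (for $(-\Delta)^s$) in the annular region $B_{2}\setminus\overline{B_1}$ or the complement of a ball, with the right growth $\varphi\sim(\,\text{dist to }\partial B_1)^s$ near the sphere; such a function, essentially $\bigl((|x|^2-1)_+\bigr)^s$ up to constants, is classical (it is the function $u_0$ of Remark \ref{rem11} after an inversion, or can be quoted from \cite{RS-Dir}). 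Rescaling it to the exterior ball $B_R(y_0)$ gives a function whose $(-\Delta)^s$ is controlled and which behaves like $\delta(x)^s$ near $x_0$.

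The core of the argument is then the following comparison. Set $w(x)=u(x)-u(x_0)$. I would estimate $(-\Delta)^s w$ in a small ball $B_\rho(x_0)\cap\Omega$: inside $\Omega$ one has $(-\Delta)^s u=0$, so $(-\Delta)^s w(x)=(-\Delta)^s\bigl(u-u(x_0)\bigr)(x)$, and the only contribution comes from the exterior data; using $u=h$ in $\R^n\setminus\Omega$ together with the $C^\alpha$ modulus of continuity of $h$ at $x_0$, one gets $|(-\Delta)^s w|\le C$ in $\Omega\cap B_\rho(x_0)$ for $\rho$ a fixed small radius (the tail integral over points far from $x_0$ is bounded because $u$ is bounded — this is where boundedness of $u$, hence of $h$ near infinity after the $C^\alpha$ assumption, enters). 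On the exterior side, $|w(x)|=|h(x)-h(x_0)|\le |x-x_0|^\alpha\le C$ for $x\in\R^n\setminus\Omega$ near $x_0$ and $|w|\le 2\|u\|_\infty$ globally. Now I would take as barrier
\[
\psi(x)=C_1\bigl(\text{dist}(x,B_R(y_0))\bigr)^s\ \ \text{(cut off away from }x_0\text{)}\ +\ C_2|x-x_0|^\beta,
\]
choosing $C_1,C_2$ large enough (depending only on $n,s,\alpha$ and the exterior ball radius, which is controlled by $\Omega$) so that $(-\Delta)^s\psi\ge |(-\Delta)^s w|$ in $\Omega\cap B_\rho(x_0)$ and $\psi\ge |w|$ both outside $\Omega$ and on $\Omega\setminus B_\rho(x_0)$. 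The maximum principle for $(-\Delta)^s$ then yields $|w|\le\psi$ in $\Omega\cap B_\rho(x_0)$, and since near $x_0$ one has $\text{dist}(x,B_R(y_0))\le C\delta(x)$ and $|x-x_0|\ge\delta(x)$ but also (because $x_0$ is the nearest boundary point) $|x-x_0|=\delta(x)$, this gives exactly $|u(x)-u(x_0)|\le C\delta(x)^\beta$ for $x$ close to $\partial\Omega$. For $x$ at distance $\ge\rho$ from $\partial\Omega$ the estimate is trivial from the global $L^\infty$ bound on $u$, which follows from the maximum principle applied to \eqref{g} since $\|u\|_{L^\infty(\R^n)}\le\|h\|_{L^\infty(\R^n\setminus\Omega)}\le C$.

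The main obstacle I anticipate is getting the barrier to simultaneously dominate the correct $\beta=\min\{s,\alpha\}$ power: the natural $s$-harmonic barrier built from the exterior ball only delivers the exponent $s$, which is the right answer when $\alpha\ge s$, but when $\alpha<s$ one needs the weaker growth $\delta^\alpha$, and the term $C_2|x-x_0|^\alpha$ must be shown to be a supersolution in the relevant region, i.e. one must check $(-\Delta)^s\bigl(|x-x_0|^\alpha\bigr)$ has the right sign/size near $x_0$ — this requires a short computation with the explicit $s$-Laplacian of a power, or a scaling/compactness argument, and is the one genuinely delicate point. Everything else is bookkeeping: tracking that the constants depend only on $n$, $s$, $\alpha$, and (through the exterior ball radius) on $\Omega$, and assembling the near-boundary and interior estimates into the stated bound.
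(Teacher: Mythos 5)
Your overall strategy --- exterior ball condition, an explicit supersolution built from the Kelvin transform of Getoor's solution $(1-|x|^2)_+^s$ (the paper's Lemma \ref{supersolution}), comparison in $\Omega\cap B_{2R}$ near each boundary point, and the trivial $L^\infty$ bound for points far from $\partial\Omega$ --- is the paper's. But there is a genuine gap exactly at the point you flag as delicate: the barrier term $C_2|x-x_0|^\beta$ is not a supersolution near $x_0$; it is a strict subsolution whose fractional Laplacian blows up to $-\infty$ there. Indeed, for $\beta\in(0,2s)$ one has $(-\Delta)^s|x-x_0|^\beta=\lambda_{n,s,\beta}\,|x-x_0|^{\beta-2s}$ with $\lambda_{n,s,\beta}<0$ (already at $x=x_0$ the integral $\int_{B_1}|y|^{\beta-n-2s}\,dy$ diverges, so the fractional Laplacian equals $-\infty$ there). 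Since the Kelvin-transform part of your $\psi$ has \emph{bounded} fractional Laplacian in the rescaled annulus, no choice of $C_1,C_2$ can make $(-\Delta)^s\psi\ge 0$ in $\Omega\cap B_\rho(x_0)$: the sum tends to $-\infty$ as $x\to x_0$ from inside $\Omega$, so the comparison principle cannot be invoked. (A smaller confusion: $(-\Delta)^s w\equiv 0$ in $\Omega$, since $w=u-u(x_0)$ differs from $u$ by a constant; the exterior data enters only through the requirement $\psi\ge|w|$ outside $\Omega$, not through a bounded right-hand side inside.)

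The repair is not a short computation but a restructuring of the barrier, and it is what the paper does. First reduce to $\alpha\le s$ (if $\alpha>s$, use $\|h\|_{C^s}\le C\|h\|_{C^\alpha}$), so that $\beta=\alpha$. Then take $\varphi_R(x)=h(x_0)+3R^\alpha+C_3R^s\varphi\bigl(\frac{x-x_1}{R}\bigr)$, with $x_1$ the center of the exterior ball: the H\"older modulus of $h$ is absorbed by the \emph{constant} $3R^\alpha$ inside $B_{2R}$ (where $|x-x_0|\le 3R$, so $|x-x_0|^\alpha\le 3R^\alpha$), while outside $B_{2R}$ the needed inequality $h(x_0)+|x-x_0|^\alpha\le\varphi_R$ follows from the lower growth $\varphi\ge c_1(|x|-1)^s$ combined with $\alpha\le s$; no term with exponent $\alpha$ ever has its fractional Laplacian computed. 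The comparison in $\Omega\cap B_{2R}$ then gives $u-h(x_0)\le C_0R^\alpha$ there, and running over $x_0\in\partial\Omega$ with $R$ comparable to $\delta(x)$ yields the stated bound. As written, your barrier does not close the argument in either the case $\alpha<s$ or $\alpha\ge s$ (the term $C_2|x-x_0|^\beta$ is present in both), so this step must be replaced rather than merely checked.
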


Lemma \ref{bound-u} will be proved using the following supersolution.
Next lemma (and its proof) is very similar to Lemma 2.6 in \cite{RS-Dir}.

\begin{lem}\label{supersolution} Let $s\in(0,1)$.
Then, there exist constants $\epsilon$, $c_1$, and $C_2$, and a continuous radial function $\varphi$ satisfying
\begin{equation}\label{supers}
\begin{cases}
(-\Delta)^s \varphi \ge 0 &\mbox{in }B_2\setminus B_1\\
\varphi \equiv 0 \quad &\mbox{in }B_1 \\
c_1(|x|-1)^s\le\varphi \le C_2(|x|-1)^s &\mbox{in }\R^n\setminus B_1.
\end{cases}
\end{equation}
The constants $c_1$ and $C_2$ depend only on $n$, $s$, and $\beta$.
\end{lem}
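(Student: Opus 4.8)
The plan is to explicitly construct the radial supersolution $\varphi$ as a power of the distance to $\partial B_1$, suitably cut off so that it is compactly supported near $B_1$. More precisely, I would set $\varphi(x) = \bigl((|x|-1)_+\bigr)^s\,\eta(|x|)$ for a smooth cutoff $\eta$ with $\eta\equiv 1$ on $[1,3/2]$ and $\eta\equiv 0$ on $[2,\infty)$, so that automatically $\varphi\equiv 0$ in $B_1$ and $c_1(|x|-1)^s\le \varphi\le C_2(|x|-1)^s$ holds trivially on $\R^n\setminus B_1$ with constants depending only on the dimension and $s$ (using that $\varphi$ is bounded and behaves like $(|x|-1)^s$ near $\partial B_1$ and decays like $(|x|-1)^s$ only up to $|x|=2$, which is harmless since $(|x|-1)^s$ is bounded above there). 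The only real content is the inequality $(-\Delta)^s\varphi\ge 0$ in $B_2\setminus B_1$.

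To establish that, I would first recall the one-dimensional computation: the function $t\mapsto (t_+)^s$ on $\R$ is $s$-harmonic on $\{t>0\}$, i.e. $(-\Delta)_\R^s\,(t_+)^s = 0$ for $t>0$; more generally, in $\R^n$ one knows (see \cite{RS-Dir, G}) that $\bigl((1-|x|^2)_+\bigr)^s$ and related "distance-power" profiles have an explicitly computable, bounded fractional Laplacian near the boundary. The cleanest route is to follow the argument of Lemma 2.6 in \cite{RS-Dir} essentially verbatim: split $(-\Delta)^s\varphi(x)$ for $x\in B_2\setminus B_1$ into the contribution of the singular profile $(|x|-1)_+^s$ and the error coming from the cutoff $\eta$ and from the difference between the Euclidean distance to $\partial B_1$ and a one-dimensional half-space distance. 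The leading term is nonnegative (indeed it is $0$ or has a favorable sign coming from the curvature of the sphere, since $B_1$ is convex so the exterior looks "flatter" than a half-space, which only helps), and the error terms are bounded by a constant; by choosing $\epsilon>0$ small and working in the annulus $B_{1+\epsilon}\setminus B_1$ one makes the (possibly negative) bounded error be dominated by the blow-up of the good term. Away from the boundary, in $B_2\setminus B_{1+\epsilon}$, the function $\varphi$ is smooth and one checks nonnegativity of $(-\Delta)^s\varphi$ directly, or simply replaces $\varphi$ on that region by a large constant times a fixed smooth bump — rescaling constants as needed.

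The main obstacle is the boundary estimate for $(-\Delta)^s\varphi$ in the thin annulus: one must show that the singular part of $(-\Delta)^s$ applied to $(|x|-1)_+^s$ either vanishes or has the right sign, and that this dominates the $O(1)$ perturbation from the cutoff. Concretely, writing $x = (1+r)e$ with $e\in S^{n-1}$ and $0<r<\epsilon$, one compares $(-\Delta)^s\bigl[(|y|-1)_+^s\bigr](x)$ with the half-space quantity $(-\Delta)^s\bigl[(y\cdot e - 1)_+^s\bigr](x)$, which is identically zero by the $1$D computation; the difference is controlled using $\bigl|\,|y|-1-(y\cdot e-1)\,\bigr|\le C|y-x|^2$ for $y$ near $x$, together with the exterior ball geometry, giving an error that is $O(r^{s}) \cdot$(bounded) or better — in any case of lower order than the $r^{-s}$-type behavior that would make the construction fail. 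I would also double-check the integrability of the tail (the region $|y-x|\gtrsim 1$), which is immediate since $\varphi$ is bounded and compactly supported. Once \eqref{supers} is in hand, Lemma \ref{bound-u} follows by placing a rescaled, translated copy of $\varphi$ at each boundary point $x_0$ (using the exterior ball condition) as a barrier and invoking the comparison principle, exactly as in \cite{RS-Dir}.
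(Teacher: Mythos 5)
Your construction has two genuine gaps. First, cutting $\varphi$ off to be identically zero outside $B_2$ is incompatible with the statement you are proving: the lemma requires $c_1(|x|-1)^s\le\varphi$ on \emph{all} of $\R^n\setminus B_1$, and this lower bound (growth like $(|x|-1)^s$ at infinity) is not a cosmetic detail --- it is exactly what is used later, in the proof of Lemma \ref{bound-u}, to guarantee $h(x)\le h(x_0)+|x-x_0|^\alpha\le \varphi_R(x)$ in $\R^n\setminus B_{2R}$. Your remark that the failure of the bound for $|x|\ge 2$ is ``harmless'' is not correct; a compactly supported barrier cannot dominate the exterior datum far from $x_0$. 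Second, the core inequality $(-\Delta)^s\varphi\ge 0$ in $B_2\setminus B_1$ is not actually established. The half-space profile $(y\cdot e-1)_+^s$ is \emph{exactly} $s$-harmonic, so there is no positive leading term blowing up like $r^{-s}$ available to absorb errors: the $r^{-s}$ contribution from $y\in B_1$ (where $\varphi=0$) is cancelled to leading order by the region where $\varphi(y)>\varphi(x)$, and whether the curvature remainder has a favorable sign is precisely the delicate computation you skip by asserting that convexity of $B_1$ ``only helps.'' Moreover, the cutoff $\eta$ contributes a bounded but possibly negative amount to $(-\Delta)^s\varphi$ throughout the annulus, and near $|x|=2$ (where you are still required to have $(-\Delta)^s\varphi\ge0$) there is nothing positive to dominate it; replacing $\varphi$ there by ``a large bump'' changes $(-\Delta)^s\varphi$ globally, including near $\partial B_1$, so it is not a local repair.

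The paper (and the Lemma 2.6 of \cite{RS-Dir} that you cite) proceeds quite differently and avoids all of this: it takes the fractional Kelvin transform $u_0^*(x)=|x|^{2s-n}u_0(x/|x|^2)$ of Getoor's explicit solution $u_0=(1-|x|^2)_+^s$, for which $(-\Delta)^s u_0=\kappa_{n,s}>0$ in $B_1$, so that $(-\Delta)^s u_0^*\ge c_0>0$ in $B_2\setminus B_1$ with a \emph{strictly positive} lower bound; it then adds a smooth function supported outside $B_3$ and comparable to $(|x|-1)^s$ outside $B_4$ to restore the growth at infinity, the bounded error this introduces in $B_2\setminus B_1$ being absorbed by $Cc_0$ after multiplying $u_0^*$ by a large constant. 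I would redo your proof along these lines: the strict positivity $c_0>0$ is the mechanism that lets one add corrections, and the correction must supply growth at infinity rather than remove it.
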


\begin{proof} We follow the proof of Lemma 2.6 in \cite{RS-Dir}.
Consider the function
\[u_0(x)=(1-|x|^2)^s_+.\]
It is a classical result (see \cite{G}) that this function satisfies
\[(-\Delta)^su_0= \kappa_{n,s}\ \ {\rm in}\ B_1\]
for some positive constant $\kappa_{n,s}$.

Thus, the fractional Kelvin transform of $u_0$, that we denote by $u_0^*$, satisfies
\[(-\Delta)^s u_0^*(x)=|x|^{-2s-n}(-\Delta)^su_0\left(\frac{x}{|x|^2}\right)\geq c_0\ \ {\rm in}\ B_2\setminus B_1.\]
Recall that the Kelvin transform $u_0^*$ of $u_0$ is defined by
\[u_0^*(x)=|x|^{2s-n}u_0\left(\frac{x}{|x|^2}\right).\]
Then, it is clear that
\[a_1(|x|-1)^s\leq u_0^*(x)\leq A_2(|x|-1)^s\ \ {\rm in}\ B_{2}\setminus B_1,\]
while $u_0^*$ is bounded at infinity.

Let us consider now a smooth function $\eta$ satisfying $\eta\equiv0$ in $B_3$ and
\[A_1(|x|-1)^s\leq \eta\leq A_2(|x|-1)^s\ \ {\rm in}\ \R^n\setminus B_4.\]
Observe that $(-\Delta)^s\eta$ is bounded in $B_2$, since $\eta(x)(1+|x|)^{-n-2s}\in L^1$.
Then, the function
\[\varphi=Cu_0^*+\eta,\]
for some big constant $C>0$, satisfies
\[
\begin{cases}
(-\Delta)^s \varphi \ge 1 &\mbox{in }B_{2}\setminus B_1\\
\varphi \equiv 0 \quad &\mbox{in }B_1 \\
c_1(|x|-1)^s\le\varphi \le C_2(|x|-1)^s &\mbox{in }\R^n\setminus B_1.
\end{cases}
\]
Indeed, it is clear that $\varphi\equiv0$ in $B_1$.
Moreover, taking $C$ big enough it is clear that we have that $(-\Delta)^s \varphi \ge 1$.
In addition, the condition $c_1(|x|-1)^s\le\varphi \le C_2(|x|-1)^s$ is satisfied by construction.
Thus, $\varphi$ satisfies \eqref{supers}, and the proof is finished.
\end{proof}

Once we have constructed the supersolution, we can give the

\begin{proof}[Proof of Lemma \ref{bound-u}]
First, we can assume that $\|h\|_{C^\alpha(\R^n\setminus\Omega)}=1$.
Then, by the maximum principle we have that $\|u\|_{L^\infty(\R^n)}=\|h\|_{L^\infty(\R^n)}\leq1$.
We can also assume that $\alpha\leq s$, since
\[\|h\|_{C^s(\R^n)}\leq C\|h\|_{C^\alpha(\R^n\setminus\Omega)}\quad\textrm{whenever}\ s<\alpha.\]

Let $x_0\in \partial\Omega$ and $R>0$ be small enough.
Let $B_R$ be a ball of radius $R$, exterior to $\Omega$, and touching $\partial\Omega$ at $x_0$.
Let us see that $|u(x)-u(x_0)|$ is bounded by $CR^\beta$ in $\Omega\cap B_{2R}$.

By Lemma \ref{supersolution}, we find that there exist constants $c_1$ and $C_2$, and a radial continuous function $\varphi$ satisfying
\begin{equation}\label{supers}
\begin{cases}
(-\Delta)^s \varphi \ge 0 &\mbox{in }B_2\setminus B_1\\
\varphi \equiv 0 \quad &\mbox{in }B_1 \\
c_1(|x|-1)^s\le\varphi \le C_2(|x|-1)^s &\mbox{in }\R^n\setminus B_1.
\end{cases}
\end{equation}

\begin{figure}[h]
\begin{center}
\includegraphics[]{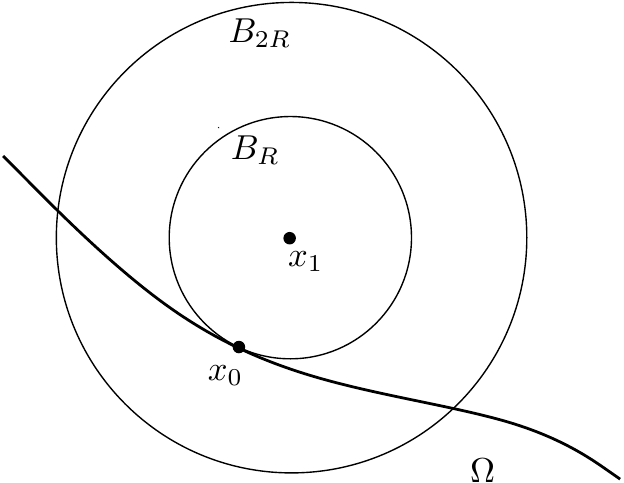}
\end{center}
\caption{\label{figura2} }
\end{figure}

Let $x_1$ be the center of the ball $B_R$.
Since $\|h\|_{C^\alpha(\R^n\setminus\Omega)}=1$, it is clear that the function
\[\varphi_R(x)=h(x_0)+3R^\alpha+C_3R^s\varphi\left(\frac{x-x_1}{R}\right),\]
with $C_3$ big enough, satisfies
\begin{equation}\label{supersR}
\begin{cases}
(-\Delta)^s \varphi_R \ge 0 &\mbox{in }B_{2R}\setminus B_R\\
\varphi_R \equiv h(x_0)+3R^\alpha \quad &\mbox{in }B_R \\
h(x_0)+|x-x_0|^\alpha\leq \varphi_R&\mbox{in }\R^n\setminus B_{2R}\\
\varphi_R \le h(x_0)+C_0R^\alpha &\mbox{in }B_{2R}\setminus B_R.
\end{cases}
\end{equation}
Here we have used that $\alpha\leq s$.

Then, since
\[(-\Delta)^s u\equiv0\leq (-\Delta)^s \varphi_R\quad \textrm{in}\ \Omega\cap B_{2R},\]
\[h\leq h(x_0)+3R^\alpha\equiv\varphi_R\quad \textrm{in}\ B_{2R}\setminus\Omega,\]
and
\[h(x)\leq h(x_0)+|x-x_0|^\alpha\leq\varphi_R\quad \textrm{in}\ \R^n\setminus B_{2R},\]
it follows from the comparison principle that
\[u\leq \varphi_R\ \ {\rm in}\ \Omega\cap B_{2R}.\]
Therefore, since $\varphi_R \le h(x_0)+C_0R^\alpha$ in $B_{2R}\setminus B_R$,
\begin{equation}\label{eqhatu}
u(x)- h(x_0)\leq C_0R^\alpha\ \ {\rm in}\ \Omega\cap B_{2R}.\end{equation}

Moreover, since this can be done for each $x_0$ on $\partial\Omega$, $h(x_0)=u(x_0)$, and we have $\|u\|_{L^\infty(\Omega)}\leq 1$, we find that
\begin{equation}\label{equ}
u(x)-u(x_0)\leq C\delta^\beta\ \ {\rm in}\ \Omega,
\end{equation}
where $x_0$ is the projection on $\partial\Omega$ of $x$.

Repeating the same argument with $u$ and $h$ replaced by $-u$ and $-h$, we obtain the same bound for $h(x_0)-u(x)$, and thus the lemma follows.
\end{proof}

The following result will be used to obtain $C^\beta$ estimates for $u$ inside $\Omega$.
For a proof of this lemma see for example Corollary 2.4 in \cite{RS-Dir}.

\begin{lem}[\cite{RS-Dir}]\label{int-est-brick2}
Let $s\in(0,1)$, and let $w$ be a solution of $(-\Delta)^s w = 0$ in $B_2$.
Then, for every $\gamma\in(0,2s)$
\[ \|w\|_{C^{\gamma}(\overline{B_{1/2}})} \le C\biggl(\|(1+|x|)^{-n-2s}w(x)\|_{L^1(\R^n)} + \|w\|_{L^\infty(B_2)}\biggr),\]
where the constant $C$ depends only on $n$, $s$, and $\gamma$.
\end{lem}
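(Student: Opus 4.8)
The plan is to represent $w$ inside $B_2$ by the Poisson kernel of $(-\Delta)^s$ on a ball and then differentiate under the integral sign. Since $B_1\subset\subset B_2$ and $w$ solves $(-\Delta)^s w=0$ in $B_2$, $w$ is continuous on $\overline{B_1}$; moreover we may assume the right-hand side of the claimed inequality is finite, so in particular $\int_{\R^n}(1+|y|)^{-n-2s}|w(y)|\,dy<\infty$ and the classical Poisson formula of M.\ Riesz for the ball applies:
\[
w(x)=c_{n,s}\int_{\R^n\setminus B_1}\Big(\frac{1-|x|^2}{|y|^2-1}\Big)^{\!s}\frac{w(y)}{|x-y|^n}\,dy,\qquad x\in B_1 .
\]
Write $\mathcal P(x,y)$ for the kernel. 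For $x\in B_{1/2}$ and $y\notin B_1$ one has $|x-y|\ge 1/2$ and $1-|x|^2\ge 3/4$, so $\mathcal P(\cdot,y)$ is $C^\infty$ on $\overline{B_{1/2}}$ and
\[
|\partial_x^\alpha \mathcal P(x,y)|\le C_\alpha\,(|y|^2-1)^{-s}(1+|y|)^{-n}\qquad\text{for all }\ y\notin B_1 .
\]

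I would then differentiate the representation formula under the integral and split $\R^n\setminus B_1$ into $\{1<|y|<3/2\}$ and $\{|y|\ge 3/2\}$. On the first region $(1+|y|)^{-n}$ is bounded and $(|y|^2-1)^{-s}$ is integrable — this is precisely where $s<1$ enters — and $|w|\le\|w\|_{L^\infty(B_2)}$ there since $\{1<|y|<3/2\}\subset B_2$, so the contribution is $\le C(n,s,|\alpha|)\|w\|_{L^\infty(B_2)}$; on the second region $(|y|^2-1)^{-s}(1+|y|)^{-n}\le C(1+|y|)^{-n-2s}$, so the contribution is $\le C\|(1+|y|)^{-n-2s}w\|_{L^1(\R^n)}$. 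Hence every $C^k$ norm of $w$ on $\overline{B_{1/2}}$ is bounded by $C(n,s,k)\big(\|(1+|y|)^{-n-2s}w\|_{L^1(\R^n)}+\|w\|_{L^\infty(B_2)}\big)$, which in particular yields the asserted $C^\gamma$ estimate for any $\gamma$ (the restriction $\gamma<2s$ is not even needed), with constant depending only on $n$, $s$, and $\gamma$.

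The only delicate points — and the main obstacle — are (i) invoking the Poisson representation, which is classical once one observes that $w$ is continuous up to $\partial B_1$ by interior regularity of $s$-harmonic functions and has a finite weighted-$L^1$ tail, and (ii) handling the integrable singularity of $\mathcal P(x,y)$ as $|y|\to1^+$, which forces the local $L^\infty$ term and again relies on $s<1$. If one prefers to avoid the explicit kernel, an equally short route is available: fix $\eta\in C_c^\infty(B_{7/4})$ with $\eta\equiv1$ on $B_{3/2}$; then on $B_1$ one has $(-\Delta)^s(\eta w)=-(-\Delta)^s((1-\eta)w)=:g_1$, and since $(1-\eta)w$ vanishes on $B_{3/2}\supset B_1$, $g_1$ is given by an integral over $\{|y|>3/2\}$ that is smooth in $B_1$ with $C^k(B_{3/4})$ norms controlled by $\|(1+|y|)^{-n-2s}w\|_{L^1}+\|w\|_{L^\infty(B_2)}$. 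Applying the standard interior estimate for $(-\Delta)^s v=g$ with $v=\eta w\in L^\infty(\R^n)$ and $g=g_1\in L^\infty$ bounds $\|\eta w\|_{C^\gamma(B_{1/2})}$ for $\gamma<2s$, and since $(1-\eta)w\equiv0$ on $B_{1/2}$ one has $w=\eta w$ there, which concludes the proof.
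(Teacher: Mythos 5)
Your proposal is correct. Note first that the paper does not prove this lemma at all: it is quoted from \cite{RS-Dir} (Corollary 2.4 there), whose proof is precisely your second, cutoff-based route --- write $w=\eta w+(1-\eta)w$, observe that $(-\Delta)^s\bigl((1-\eta)w\bigr)$ is smooth in $B_1$ with $L^\infty$ norm controlled by the weighted $L^1$ tail because the integrand has no singularity there, and apply the standard interior estimate for $(-\Delta)^s v=g$ with $v,g\in L^\infty$; so on that route you have reproduced the intended argument, and your bookkeeping ($|x-y|\ge 1/2$ for $x\in B_{3/4}$, $|y|\ge 3/2$, hence $|x-y|^{-n-2s}\le C(1+|y|)^{-n-2s}$) is right. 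Your first route, via the M.~Riesz Poisson kernel for the ball, is a genuinely different and in fact stronger argument: differentiating under the integral gives $C^k(\overline{B_{1/2}})$ bounds for every $k$ by the same right-hand side, so the restriction $\gamma<2s$ is indeed superfluous for exactly $s$-harmonic functions (it is needed only in the inhomogeneous version of the estimate, which is what the cutoff route passes through). The two points you flag as delicate are the right ones: the splitting $\{1<|y|<3/2\}\cup\{|y|\ge 3/2\}$ correctly isolates the integrable singularity $(|y|^2-1)^{-s}$ (where $s<1$ is used) and matches it to the $L^\infty(B_2)$ term, and the validity of the Poisson representation for a function $s$-harmonic in the larger ball $B_2$ with finite $(1+|y|)^{-n-2s}$-weighted $L^1$ norm is classical (uniqueness for the exterior Dirichlet problem in $B_1$ plus interior regularity, which gives continuity of $w$ across $\partial B_1$); it would be worth citing a precise reference for that representation if this were written out in full, but there is no gap.
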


Now, we use Lemmas \ref{bound-u} and \ref{int-est-brick2} to obtain interior $C^\beta$ estimates for the solution of \eqref{g}.

\begin{lem}\label{Cbetabounds}
Let $\Omega$ be a bounded domain satisfying the exterior ball condition, $h\in C^\alpha(\R^n\setminus\Omega)$ for some $\alpha>0$, and $u$ be the solution of \eqref{g}.
Then, for all $x\in \Omega$ we have the following estimate in $B_R(x)=B_{\delta(x)/2}(x)$
\begin{equation}\label{seminorm-estimate-u}
\|u\|_{C^\beta(\overline{B_{R}(x)})}\le C\|h\|_{C^\alpha(\R^n\setminus\Omega)},
\end{equation}
where $\beta=\min\{\alpha,s\}$ and $C$ is a constant depending only on $\Omega$, $s$, and $\alpha$.
\end{lem}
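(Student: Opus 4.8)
The plan is to combine the near‑boundary bound of Lemma~\ref{bound-u} with the interior estimate of Lemma~\ref{int-est-brick2} through a rescaling argument, in the spirit of the $C^s$ estimates of \cite[Section~2]{RS-Dir}. The $L^\infty$ part of \eqref{seminorm-estimate-u} is immediate: the maximum principle gives $\|u\|_{L^\infty(\R^n)}=\|h\|_{L^\infty(\R^n\setminus\Omega)}\le\|h\|_{C^\alpha(\R^n\setminus\Omega)}$, so it only remains to bound the $C^\beta$ seminorm on $\overline{B_R(x)}$.

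Fix $x\in\Omega$, put $R=\delta(x)/2$, and let $x_0\in\partial\Omega$ be the nearest point to $x$. Since constants are $s$-harmonic, $\bar u:=u-u(x_0)$ solves \eqref{g} with $h$ replaced by $h-h(x_0)$, and because $\delta(x)=2R$ we have $B_{2R}(x)=B_{\delta(x)}(x)\subset\Omega$; hence $v(y):=\bar u(x+Ry)$ satisfies $(-\Delta)^sv=0$ in $B_2$. The key step is the pointwise estimate
\[
|v(y)|\le C\,\|h\|_{C^\alpha(\R^n\setminus\Omega)}\,\min\bigl\{1,\ R^\beta(1+|y|)^s\bigr\}\qquad\text{for all }y\in\R^n.
\]
When $x+Ry\in\Omega$ I would write $|\bar u(x+Ry)|\le|u(x+Ry)-u(z_0)|+|h(z_0)-h(x_0)|$ with $z_0$ the closest boundary point to $x+Ry$, bound the first term by $C\|h\|_{C^\alpha}\delta(x+Ry)^\beta$ using Lemma~\ref{bound-u} and the second by $\|h\|_{C^\alpha}|z_0-x_0|^\alpha$, and use that $\delta$ is $1$-Lipschitz (so both $\delta(x+Ry)$ and $|z_0-x_0|$ are $\le CR(1+|y|)$), together with $\beta\le\alpha$ and $R(1+|y|)\le\operatorname{diam}\Omega$. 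When $x+Ry\notin\Omega$ one has $v(y)=h(x+Ry)-h(x_0)$ and argues directly from $|h(x+Ry)-h(x_0)|\le\|h\|_{C^\alpha}|x+Ry-x_0|^\alpha$ and $\|h\|_{L^\infty}$.

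With this bound, I would apply Lemma~\ref{int-est-brick2} to $v$ with $\gamma=\beta$ (legitimate since $\beta=\min\{\alpha,s\}\le s<2s$), giving $\|v\|_{C^\beta(\overline{B_{1/2}})}\le C\bigl(\|v\|_{L^\infty(B_2)}+\int_{\R^n}(1+|y|)^{-n-2s}|v(y)|\,dy\bigr)$. Both terms are $\le C\|h\|_{C^\alpha}R^\beta$: the first by the displayed estimate, and the integral by splitting $\R^n$ into $\{R^\beta(1+|y|)^s\le1\}$, where the integrand is $\le R^\beta(1+|y|)^{-n-s}$ and $\int_{\R^n}(1+|y|)^{-n-s}\,dy<\infty$, and its complement, which starts at $|y|\sim R^{-\beta/s}$ and where the integrand is $\le(1+|y|)^{-n-2s}$, contributing $\le CR^{2\beta}\le CR^\beta$ (using $R\le\operatorname{diam}\Omega$ and $\beta>0$). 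Rescaling back, $[u]_{C^\beta(\overline{B_{R/2}(x)})}=R^{-\beta}[v]_{C^\beta(\overline{B_{1/2}})}\le C\|h\|_{C^\alpha}$ — the powers of $R$ cancel — and also $\operatorname{osc}(u,B_R(x))\le2\|v\|_{L^\infty(B_2)}\le C\|h\|_{C^\alpha}R^\beta$. To replace $B_{R/2}(x)$ by $B_R(x)$, I would run the same argument centered at each $x'\in B_R(x)$ (for which $\delta(x')\in(R,3R)$, so the relevant concentric balls still lie in $\Omega$), obtaining $[u]_{C^\beta(\overline{B_{R/4}(x')})}\le C\|h\|_{C^\alpha}$, and then patch: for $z_1,z_2\in\overline{B_R(x)}$ with $|z_1-z_2|<R/4$ use this local bound, while for $|z_1-z_2|\ge R/4$ use $|u(z_1)-u(z_2)|\le\operatorname{osc}(u,B_R(x))\le C\|h\|_{C^\alpha}R^\beta\le C\|h\|_{C^\alpha}(4|z_1-z_2|)^\beta$.

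The main difficulty is precisely the tail estimate together with the cancellation of powers of $R$, which is what makes \eqref{seminorm-estimate-u} uniform in $x$. If one did not subtract $u(x_0)$, or controlled $v$ outside $B_2$ only through $\|u\|_{L^\infty}$, the tail would be merely $O(\|h\|_{C^\alpha})$ and the argument would produce a spurious factor $R^{-\beta}$ that blows up as $x\to\partial\Omega$; it is the decay $R^\beta(1+|y|)^s$ supplied by Lemma~\ref{bound-u}, whose exponent $s$ beats the kernel weight $(1+|y|)^{-n-2s}$ since $s<2s$, that keeps the tail of order $R^\beta$. A minor point, handled separately, is the regime where $\delta(x)$ is comparable to $\operatorname{diam}\Omega$: there $R$ is bounded below and \eqref{seminorm-estimate-u} follows at once from the interior estimate and the maximum principle.
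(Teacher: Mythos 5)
Your argument is correct and follows essentially the same route as the paper's proof: subtract a constant, rescale to a function $s$-harmonic in $B_2$, use Lemma \ref{bound-u} to get the $L^\infty$ bound and the weighted tail bound of order $R^\beta$, apply Lemma \ref{int-est-brick2} with $\gamma=\beta$, and rescale back so the powers of $R$ cancel, finishing with a covering argument. The only (harmless) differences are that the paper subtracts $u(x)$ rather than $u(x_0)$ and controls the tail via the cruder global bound $|\tilde u(y)|\le C\|h\|_{C^\alpha(\R^n\setminus\Omega)}R^\beta(1+|y|^\beta)$, which also integrates against $(1+|y|)^{-n-2s}$ since $\beta<2s$.
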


\begin{proof}
Note that $B_R(x)\subset B_{2R}(x)\subset \Omega$.
Let $\tilde u(y)= u(x+Ry)-u(x)$.
We have that
\begin{equation}\label{Rs1}
(-\Delta)^s \tilde u(y) =  0\quad \mbox{in }  B_1\,.
\end{equation}
Moreover, using Lemma \ref{bound-u} we obtain
\begin{equation}\label{Rs2}
\|\tilde u\|_{L^\infty(B_1)}\le C\|h\|_{C^\alpha(\R^n\setminus\Omega)} R^\beta.
\end{equation}
Furthermore, observing that $|\tilde u(y)|\le C\|h\|_{C^\alpha(\R^n\setminus\Omega)} R^\beta(1+|y|^\beta)$ in all of $\R^n$, we find
\begin{equation}\label{Rs3}
\|(1+|y|)^{-n-2s}\tilde u(y)\|_{L^1(\R^n)}\le C \|h\|_{C^\alpha(\R^n\setminus\Omega)} R^\beta,
\end{equation}
with $C$ depending only on $\Omega$, $s$, and $\alpha$.

Now, using Lemma \ref{int-est-brick2} with $\gamma=\beta$, and taking into account \eqref{Rs1}, \eqref{Rs2}, and \eqref{Rs3}, we deduce
\[\|\tilde u\|_{C^{\beta}\left(\overline{B_{1/4}}\right)} \le C \|h\|_{C^\alpha(\R^n\setminus\Omega)}R^\beta,\]
where $C=C(\Omega,s,\beta)$.

Finally, we observe that
\[[u]_{C^\beta\left(\overline{B_{R/4}(x)}\right)}=R^{-\beta}[\tilde u]_{C^\beta\left(\overline{B_{1/4}}\right)}.\]
Hence, by an standard covering argument, we find the estimate \eqref{seminorm-estimate-u} for the $C^\beta$ norm of $u$ in $\overline {B_{R}(x)}$.
\end{proof}

Now, Proposition \ref{prop:u-is-Cbeta} follows immediately from Lemma \ref{Cbetabounds}, as in Proposition 1.1 in \cite{RS-Dir}.

\begin{proof}[Proof of Proposition \ref{prop:u-is-Cbeta}]
This proof is completely analogous to the proof of Proposition 1.1 in \cite{RS-Dir}.
One only have to replace the $s$ in that proof by $\beta$, and use the estimate from the present Lemma \ref{Cbetabounds} instead of the one from \cite[Lemma 2.9]{RS-Dir}.
\end{proof}

\section*{Acknowledgements}

The authors thank Xavier Cabr\'e for his guidance and useful discussions on the topic of this paper.

We also thank Louis Dupaigne and Manel Sanch\'on for interesting discussions on the topic of this paper.

\end{document}